\newlist{condenum}{enumerate}{1}
\definecolor{specialgreen}{RGB}{1, 140, 38}
\definecolor{specialgold}{RGB}{237, 174, 0}
\definecolor{specialpink}{RGB}{255, 137, 139}
\definecolor{specialblue}{RGB}{138, 183, 255}
\newcounter{intro}
\newtheorem{introthm}[intro]{Theorem}
\newtheorem{thm}{Theorem}[section]
\newtheorem{prop}[thm]{Proposition}
\newtheorem{lem}[thm]{Lemma}
\newtheorem{cor}[thm]{Corollary}
\theoremstyle{definition}
\newtheorem{ex}[thm]{Example}
\theoremstyle{remark}
\newtheorem*{rem}{Remark}
\newtheorem*{rems}{Remarks}
\numberwithin{equation}{section}
\newcommand{\FF}{\mathbb{F}}
\newcommand{\ZZ}{\mathbb{Z}}
\def\mydefb#1{\expandafter\def\csname #1#1#1\endcsname{\mathcal{#1}}}
\def\mydefallb#1{\ifx#1\mydefallb\else\mydefb#1\expandafter\mydefallb\fi}
\newcommand{\UT}{\mathrm{UT}}
\newcommand{\ut}{\mathfrak{ut}}
\renewcommand{\epsilon}{\varepsilon}
\renewcommand{\setminus}{-}
\newcommand{\qstirling}{\genfrac{\{}{\}}{0pt}{}}
\newcommand{\qbinom}{\genfrac{(}{)}{0pt}{}}
\newcommand{\Ht}{\operatorname{ht}}
\newcommand{\up}[1]{\mathchoice{\hspace{.25em}\uparrow\hspace{-.25em}(#1)}{\hspace{.25em}\uparrow\hspace{-.125em}(#1)}{\uparrow(#1)}{\uparrow(#1)}}
\newcommand{\upnum}[2]{\mathchoice{\hspace{.25em}\uparrow_{#2}\hspace{-.25em}(#1)}{\hspace{.25em}\uparrow_{#2}\hspace{-.25em}(#1)}{\uparrow_{#2}(#1)}{\uparrow_{#2}(#1)}}
\newcommand{\edge}[3][color = black]{\mathchoice{\tikz[scale = 0.75, baseline = -0.1cm]{
\draw[thick, #1] (0, 0) -- (1, 0);
\draw[fill = black] (0, 0) circle (2pt) node[below] {$\scriptstyle #2$}; 
\draw[fill = black] (1, 0) circle (2pt) node[below] {$\scriptstyle #3$}; }}{(#2, #3)}{(#2, #3)}
{(#2, #3)}}
\newcommand{\dedge}[3][color = specialgold, dashed]{\mathchoice{\tikz[scale = 0.75, baseline = -0.15cm]{
\draw[thick, #1] (0, 0.5) -- (0, -0.5);
\draw[fill = black] (0, 0.5) circle (2pt) node[left] {$\scriptstyle #2$}; 
\draw[fill = black] (0, -0.5) circle (2pt) node[left] {$\scriptstyle #3$};}}{(#2, #3)}{(#2, #3)}{(#2, #3)}}
\newcommand{\binding}[5][color = specialgold, dashed]{\tikz[scale = 0.75, baseline = -0.15cm]{
\draw[fill] (-0.5, 0.5) circle (2pt) node[inner sep = 1.5pt] (i) {};
\draw[fill] (0.5, 0.5) circle (2pt)  node[inner sep = 1.5pt] (j) {};
\draw[fill] (-0.5, -0.5) circle (2pt) node[inner sep = 1.5pt] (k) {};
\draw[fill] (0.5, -0.5) circle (2pt) node[inner sep = 1.5pt] (l) {};
\node[above] at (i) {$\scriptstyle #2$};
\node[above] at (j) {$\scriptstyle #3$}; 
\node[below] at (k) {$\scriptstyle #4$};
\node[below] at (l) {$\scriptstyle #5$};
\draw[thick, #1] (i) -- (k);  
\draw[thick, #1] (j) -- (l); 
\draw[thick] (i) -- (j);
\draw[thick] (k) -- (l);}}
\newcommand{\subbinding}[5][color = specialgold, dashed]{\tikz[scale = 0.25, baseline = -0.05cm]{
\draw[fill] (-0.5, 0.5) circle (2pt) node[inner sep = 1.5pt] (i) {};
\draw[fill] (0.5, 0.5) circle (2pt)  node[inner sep = 1.5pt] (j) {};
\draw[fill] (-0.5, -0.5) circle (2pt) node[inner sep = 1.5pt] (k) {};
\draw[fill] (0.5, -0.5) circle (2pt) node[inner sep = 1.5pt] (l) {};
\node[left] at (i) {$\scriptscriptstyle #2$};
\node[right] at (j) {$\scriptscriptstyle #3$}; 
\node[left] at (k) {$\scriptscriptstyle #4$};
\node[right] at (l) {$\scriptscriptstyle #5$};
\draw[thick, #1] (i) -- (k);  
\draw[thick, #1] (j) -- (l); 
\draw[thick] (i) -- (j);
\draw[thick] (k) -- (l);}}
\begin{document}

\title{The combinatorics of normal subgroups in the unipotent upper triangular group}
\author{Lucas Gagnon}
\date{}
\maketitle
\begin{abstract}
Describing the conjugacy classes of the unipotent upper triangular groups $\mathrm{UT}_{n}(\mathbb{F}_{q})$ uniformly (for all or many values of $n$ and $q$) is a nearly impossible task.  This paper takes on the related problem of describing the normal subgroups of $\mathrm{UT}_{n}(\mathbb{F}_{q})$.  For $q$ a prime, a bijection will be established between these subgroups and pairs of combinatorial objects with labels from $\mathbb{F}_{q}^{\times}$. Each pair comprises a loopless binary matroid and a tight splice, an apparently new kind of combinatorial object which interpolates between nonnesting set partitions and shortened polyominoes.  For arbitrary $q$, the same approach describes a natural subset of normal subgroups: those which correspond to the ideals of the Lie algebra $\mathfrak{ut}_{n}(\mathbb{F}_{q})$ under an approximation of the exponential map.
\end{abstract}

\section{Introduction}
\label{sec:intro}

The unipotent upper triangular group $\UT_{n}(\FF_{q})$ consists of all upper triangular matrices with each diagonal entry equal to $1$ over the finite field $\FF_{q}$ with $q$ elements.  Uniformly indexing the conjugacy classes or irreducible characters of $\UT_{n}(\FF_{q})$ (for every $n$ and $q$) is an impossibly difficult problem~\cite{GudivokEtAl}.  However, developments in unipotent combinatorics~\cite{AguiarEtAl, AliniaeifardThiem20, Andrews, Thiem} suggest that the difficulty of the conjugacy problem belies nicer combinatorial structures present in the more computable, conjugacy-adjacent properties of $\UT_{n}(\FF_{q})$.

Absent a good understanding of the fundamental representation theoretic structures of $\UT_{n}(\FF_{q})$, two questions arise: what methods can access the important properties of $\UT_{n}(\FF_{q})$, and is there a nice, combinatorial description for the output of these methods?  Answers to the former are often algebraic, like the Kirillov orbit method~\cite{Kirillov, Sangroniz}, or recursive, like the character-theoretic techniques in~\cite{Isaacs95, Isaacs07, Marberg12a}.  Answers to the latter---when positive---are bijections with families of combinatorial objects; examples include supercharacter theories with classes and characters indexed by labeled set partitions~\cite{AguiarEtAl} or nonnesting set partitions~\cite{AliniaeifardThiem20, Andrews}, and an indexing of certain irreducible characters with labeled lattice paths~\cite{Marberg12b}.

The aim of this paper is to determine whether the normal subgroups of $\UT_{n}(\FF_{q})$ belong to the latter universe of bijections and combinatorial objects.  Normal subgroups are closely related to conjugacy classes, so it is not clear that the normal subgroups of $\UT_{n}(\FF_{q})$ should have a uniform indexing set, and even if such a set exists, a nice exposition remains nontrivial.  

What is known about the normal subgroups of $\UT_{n}(\FF_{q})$ follows the classical intuition.  For a Lie group $G$, the exponential map $\exp$ gives a bijection between closed normal subgroups of $G$ and ideals of the Lie algebra $\operatorname{Lie}(G)$. Over $\FF_{q}$, the map $\exp$ is not well-defined and the right notion of closure is unclear, but morally there should be similar correspondences for nice enough groups, as is the case in the Lazard correspondence~\cite{Lazard}, and for the finite groups of Lie type.  In~\cite{Levcuk74} (see also~\cite{Levcuk76}), Lev\v{c}uk gives a bijection of this sort for $\UT_{n}(\FF_{q})$, using the map $a \mapsto 1+a$ from the nilpotent Lie algebra $\ut_{n}(\FF_{q})$ to $\UT_{n}(\FF_{q})$ in place of $\exp$.

\begin{introthm}[{\cite[Theorem 1]{Levcuk74}}]\label{introthm:A}\label{thm:correspondence}
A subset $N \subseteq \UT_{n}(\FF_{q})$ is a normal subgroup if and only if $N = 1 + \mathfrak{n}$ for an additive subgroup $\mathfrak{n} \le \ut_{n}(\FF_{q})$ with $\{[a, b]\;|\; a \in \ut_{n}(\FF_{q}), b \in \mathfrak{n}\} \subseteq \mathfrak{n}$, where $[a, b] = ab - ba$ is the Lie bracket of $\ut_{n}(\FF_{q})$.
\end{introthm}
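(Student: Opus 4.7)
The plan is to establish the bijection explicitly: every normal subgroup $N$ of $\UT_{n}(\FF_{q})$ corresponds to an additive Lie ideal $\mathfrak{n} \subseteq \ut_{n}(\FF_{q})$ via $\mathfrak{n} \leftrightarrow 1 + \mathfrak{n}$. Identifying $\UT_{n}(\FF_{q})$ with $1 + \ut_{n}(\FF_{q})$ gives the formulas
\[ (1+a)(1+b) = 1 + a + b + ab, \qquad (1+a)^{-1} = \sum_{k \ge 0} (-a)^{k}, \]
which are finite sums by nilpotency of $\ut_{n}(\FF_{q})$. A key consequence is the conjugation identity
\[ (1+c)(1+a)(1+c)^{-1} = 1 + a + [c,a](1+c)^{-1}, \]
and the group-commutator identity $[1+c, 1+a] - 1 = [c,a](1+c)^{-1}(1+a)^{-1}$. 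In both, the leading term is the Lie bracket $[c, a]$ and the remaining correction lies in a strictly higher piece of the filtration $\ut_{n} \supseteq \ut_{n}^{2} \supseteq \ut_{n}^{3} \supseteq \cdots$.

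For the \emph{sufficient direction} (Lie ideal yields a normal subgroup), I would assume $[\ut_{n}, \mathfrak{n}] \subseteq \mathfrak{n}$ and show $N = 1 + \mathfrak{n}$ is closed under multiplication, inversion, and conjugation by $\UT_{n}$. Each of these reduces via the formulas above to showing that a certain element lies in $\mathfrak{n}$, written as iterated brackets from $\mathfrak{n}$ plus corrections of strictly higher filtration depth. An induction on filtration depth (starting from $\ut_{n}^{n} = 0$) then absorbs these corrections into $\mathfrak{n}$.

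For the \emph{necessary direction} (normal subgroup yields a Lie ideal), I set $\mathfrak{n} := N - 1$. Additive closure follows from $(1+a)(1+b) - 1 = a + b + ab \in \mathfrak{n}$ combined with a filtration induction stripping the correction $ab$. Lie-bracket closure uses the group commutator identity: $[1+c, 1+a] \in N$ by normality, its expansion gives $[c, a] + (\text{higher filtration depth}) \in \mathfrak{n}$, and the same filtration induction extracts $[c, a] \in \mathfrak{n}$.

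The main technical obstacle is the filtration induction. Lie ideals of $\ut_{n}$ are not in general closed under associative multiplication by $\ut_{n}$ (one can check in $\ut_{4}$ that the Lie ideal generated by $e_{12} + e_{34}$ does not contain $e_{23} \cdot (e_{12} + e_{34}) = e_{24}$), so the correction terms from $(1+c)^{-1}$ expansions and from products $ab$ cannot be controlled by naive associative closure. Instead, nilpotency forces every correction into a strictly higher piece of the filtration, and a careful induction from the deepest filtration outward shows that all corrections can be iteratively absorbed into $\mathfrak{n}$ using only its additive and Lie-bracket closure.
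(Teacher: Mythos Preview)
Your outline correctly locates the crux: Lie ideals of $\ut_n$ need not be closed under left or right associative multiplication by $\ut_n$, so the ``correction terms'' cannot be dismissed by naive closure. But your resolution is only a promissory note. You write that ``a careful induction from the deepest filtration outward'' absorbs all corrections, yet you never state the inductive hypothesis, and I do not see one that works for the lower central series $\ut_n \supset \ut_n^2 \supset \cdots$. The sharpest case is multiplicative closure of $1+\mathfrak{n}$ in the sufficient direction: for $a,b \in \mathfrak{n}$ you need $ab \in \mathfrak{n}$ outright. While $ab$ does sit at higher depth than $a+b$, there is no identity expressing $ab$ as (iterated Lie brackets of elements of $\mathfrak{n}$) plus a remainder of depth strictly greater than that of $ab$ itself---your own $\ut_4$ example shows the associative product carries genuinely new information. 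The same gap recurs for conjugation (the term $[c,a]c$ in the expansion of $[c,a](1+c)^{-1}$ lies in $\mathfrak{n}\cdot\ut_n$, not obviously in $\mathfrak{n}$) and in the necessary direction when you try to strip $ab$ from $a+b+ab$.

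The paper avoids this by replacing the lower central series with a combinatorially tailored filtration. Writing $\operatorname{supp}(\mathfrak{n}) = \up{\lambda}$, one first shows $\ut_{\upnum{\lambda}{2}} \subseteq \mathfrak{n}$ (Lemmas~\ref{lem:upnum2} and~\ref{lem:subgroupupnum2}); then Lemma~\ref{lem:splicecondition} produces a tight splice $\SSS$ of $\lambda$ with $\ut_{\up{\lambda}\setminus\SSS} \subseteq \mathfrak{n}$. The key structural fact (Lemma~\ref{lem:quotientisomorphism}) is that every product $e_{i,j}e_{j,k}$ with $(i,j),(j,k) \in \up{\lambda}$ already lies in $\ut_{\up{\lambda}\setminus\SSS}$, so $a \mapsto 1+a$ is a group isomorphism $\ut_\lambda/\ut_{\up{\lambda}\setminus\SSS} \cong \UT_\lambda/\UT_{\up{\lambda}\setminus\SSS}$. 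Thus $1+\mathfrak{n}$ is a subgroup because its image in this quotient is one, and normality comes from Lemma~\ref{lem:commutatorcorrespondence}. The associative ideal one needs inside $\mathfrak{n}$ is not any $\ut_n^k$ but the $\mathfrak{n}$-dependent pattern ideal $\ut_{\up{\lambda}\setminus\SSS}$; locating it is where the substance of the argument lies, and your sketch does not supply a substitute.
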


Lie algebra ideals are $\FF_{q}$-subspaces, so if $q$ is not prime $\UT_{n}(\FF_{q})$ has normal subgroups which do not correspond to Lie algebra ideals. (In fact, these are Lie \textit{ring} ideals.)  Let
\[
\mathscr{I}_{n}(q) = \{1 + \mathfrak{n} \;|\; \text{$\mathfrak{n}$ is an ideal of $\ut_{n}(\FF_{q})$}\} \subseteq \{N \trianglelefteq \UT_{n}(\FF_{q})\}.
\]
The main result of this paper concerns this set of ``closed'' normal subgroups.

\begin{introthm}\label{introthm:B}
There is a bijection
\[
\mathscr{I}_{n}(q) \longleftrightarrow \left\{ (\SSS, \sigma, \MMM, \tau) \;\middle|\; \begin{array}{c}
\text{$(\SSS, \sigma)$ is an $\FF_{q}^{\times}$-labeled tight splice on $\{1, 2, \ldots, n\}$}\\
\text{and $(\MMM, \tau)$ is an $\FF_{q}^{\times}$-labeled loopless binary} \\ 
\text{matroid on the rows and columns of $\SSS$}
\end{array}
 \right\}.
\]
When $q$ is prime, this accounts for every normal subgroup of $\UT_{n}(\FF_{q})$.
\end{introthm}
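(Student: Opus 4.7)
The plan is to first reduce to the problem of bijecting ideals of $\ut_n(\FF_q)$ with the claimed combinatorial tuples. By Theorem~A this describes $\mathscr{I}_n(q)$, and when $q$ is prime every additive subgroup of $\ut_n(\FF_q)$ is automatically an $\FF_q$-subspace, so $\mathscr{I}_n(q)$ already exhausts the normal subgroups; thus the final assertion will follow immediately from the bijection.

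To build the forward map, I would extract data from an ideal $\mathfrak{n} \le \ut_n(\FF_q)$ using the standard basis $\{e_{ij}\}$ and the bracket $[e_{ij}, e_{kl}] = \delta_{jk} e_{il} - \delta_{li} e_{kj}$. Order the positions $(i,j)$ with $i<j$ by, say, reverse lexicographic order, and consider the filtration of $\mathfrak{n}$ by vanishing of leading coordinates. The positions where the filtration jumps form a subset $S(\mathfrak{n}) \subseteq \{(i,j) : i < j\}$, together with a leading scalar in $\FF_q^{\times}$ at each jump. Bracketing a generator supported at $(i,j)$ against $e_{k,i}$ for $k<i$ and against $e_{j,l}$ for $l>j$ produces new elements of $\mathfrak{n}$, and tracking which leading positions this forces to appear should recover exactly the ``tight splice'' closure axioms; the leading scalars provide the $\FF_q^{\times}$-labeling $\sigma$.

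Beyond the leading positions, a canonical generator of $\mathfrak{n}$ attached to a cell of $\SSS$ has auxiliary coefficients that cannot be chosen freely, because further brackets must land back in $\mathfrak{n}$. My expectation is that these constraints are linear relations indexed by the rows and columns of $\SSS$, and that the relations one obtains by repeated bracketing have structure coefficients drawn from $\{\pm 1\}$ (the only scalars appearing in the bracket formula). This suggests that the dependency data is encoded by a binary matroid $\MMM$ on the rows and columns of $\SSS$, looplessness corresponding to the fact that no row or column index present in $\SSS$ is forced to vanish identically, and the labeling $\tau$ records the $\FF_q^{\times}$-values of the chosen generating relations.

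For the inverse, given a tuple $(\SSS, \sigma, \MMM, \tau)$ I would write down one canonical generator per cell of $\SSS$ with leading coefficient from $\sigma$ and trailing coefficients dictated by the circuits of $(\MMM,\tau)$, and take the $\FF_q$-span. The main obstacle, and the heart of the proof, is verifying that this span is closed under $[a,\cdot]$ for every $a \in \ut_n(\FF_q)$ precisely when $(\SSS, \sigma)$ is a tight splice and $(\MMM,\tau)$ is a loopless binary matroid on its rows and columns; this is where the (apparently new) combinatorial axioms of a tight splice acquire their content, and both directions will rest on a careful case analysis of how the generators transform under bracketing with the $e_{kl}$. Once the two constructions are shown to be mutually inverse, the statement for prime $q$ is immediate from the first paragraph.
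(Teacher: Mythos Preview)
Your outline has the right top-level shape (reduce via Theorem~A, then biject ideals with tuples), but the way you propose to extract the combinatorial data from an ideal is not what actually works, and two specific misconceptions would block a correct proof.

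First, the labeled tight splice $(\SSS,\sigma)$ is not obtained from ``jump positions'' of a reverse-lex filtration or from leading scalars of chosen generators. In the argument that succeeds, the underlying nonnesting set partition $\lambda$ is simply the set of $\preceq$-minimal elements of $\operatorname{supp}(\mathfrak{n})$; the vertical edges $\nu$ are those $(i,j)\in\upnum{\lambda}{1}$ for which $\FF_q e_{i,j}\not\subseteq\mathfrak{n}$ (an intrinsic property of the ideal, not of any chosen basis); and the label $\sigma$ on a binding $\subbinding{i}{j}{j+1}{l}$ is the common ratio $a_{i,j}/a_{j+1,l}$ forced on \emph{every} $a\in\mathfrak{n}$. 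These facts do not come from pivots but from the lemma $\ut_{\upnum{\lambda}{2}}\subseteq[\ut_n,[\ut_n,\mathfrak{n}]]\subseteq\mathfrak{n}$ together with the computation of $[\FF_q e_{j,j+1},a]$ modulo $\ut_{\upnum{\lambda}{2}}$.

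Second, the word ``binary'' in ``loopless binary matroid'' has nothing to do with the $\pm 1$ structure constants of the bracket, so your heuristic for why a binary matroid appears is misleading. The real mechanism is a sandwich: once $(\SSS,\sigma)$ is fixed, there are explicit ideals $\mathfrak{D}_{\SSS,\sigma}\subseteq\mathfrak{Z}_{\SSS,\sigma}$ with $[\ut_n,\mathfrak{Z}_{\SSS,\sigma}]=\mathfrak{D}_{\SSS,\sigma}$, and every $\mathfrak{n}$ in the family satisfies $\mathfrak{D}_{\SSS,\sigma}\subseteq\mathfrak{n}\subseteq\mathfrak{Z}_{\SSS,\sigma}$. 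The quotient $\mathfrak{Z}_{\SSS,\sigma}/\mathfrak{D}_{\SSS,\sigma}$ carries a natural basis indexed by $\operatorname{CR}(\SSS)$, and the admissible images $\mathfrak{n}/\mathfrak{D}_{\SSS,\sigma}$ are exactly the subspaces whose reduced row-echelon form hits every column coordinate and contains no pure $b_{\RRR}$. An $\FF_q^\times$-labeled loopless binary matroid on $\operatorname{CR}(\SSS)$ is nothing more than the bookkeeping device for such an RRE form ($U$ the pivot coordinates, $V$ the non-pivot coordinates, edges and $\tau$ the nonzero off-pivot entries). So the matroid records a choice of subspace in a fixed abelian quotient, not bracket relations; your plan of ``one generator per cell of $\SSS$'' with ``trailing coefficients dictated by circuits'' does not match this picture, and without the $\mathfrak{D}\subseteq\mathfrak{n}\subseteq\mathfrak{Z}$ sandwich you will not be able to show that the inverse construction lands in ideals or that the two maps invert each other.
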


An explanation of the objects in this bijection follows.  A \emph{tight splice} is an apparently unknown object formed by connecting the blocks of a nonnesting set partition according to certain rules.  The result is a graph in which each connected component has the shape of a \emph{shortened polyomino}, a combinatorial object used in~\cite{SzuEnEtAl07, SzuEnEtAl13, DeutschShapiro} to study Catalan statistics; these statistics appear in my formula for $|\mathscr{I}_{n}(q)|$, see Corollary~\ref{cor:numideals}.  This grid-like shape allows for a labelling scheme in which labels are placed in each ``box'' of a splice.  For example
\begin{center}
\begin{tikzpicture}[baseline = -0.1cm]
\begin{scope}[xshift = 0cm, yshift = 1cm]
\draw[fill] (0, 0) circle (2pt) node[inner sep = 2pt] (1) {};
\draw[fill] (1, 0) circle (2pt) node[inner sep = 2pt] (2) {};
\draw[fill] (2, 0) circle (2pt) node[inner sep = 2pt] (4) {};
\end{scope}
\begin{scope}[xshift = 0cm, yshift = 0cm]
\draw[fill] (0, 0) circle (2pt) node[inner sep = 2pt] (3) {};
\draw[fill] (1, 0) circle (2pt) node[inner sep = 2pt] (5) {};
\draw[fill] (2, 0) circle (2pt) node[inner sep = 2pt] (6) {};
\end{scope}
\begin{scope}[xshift = 1cm, yshift = -1cm]
\draw[fill] (0, 0) circle (2pt) node[inner sep = 2pt] (7) {};
\draw[fill] (1, 0) circle (2pt) node[inner sep = 2pt] (8) {};
\end{scope}
\node[above] at (1) {$\scriptstyle 1$};
\node[above] at (2) {$\scriptstyle 2$};
\node[below] at (3) {$\scriptstyle 3$};
\node[above] at (4) {$\scriptstyle 4$};
\node[below left] at (5) {$\scriptstyle 5$};
\node[below right] at (6) {$\scriptstyle 6$};
\node[below] at (7) {$\scriptstyle 7$};
\node[below] at (8) {$\scriptstyle 8$};
\draw[thick] (1) -- (2);
\draw[thick] (2) -- (4);
\draw[thick] (3) -- (5);
\draw[thick] (5) -- (6);
\draw[thick] (7) -- (8);
\draw[thick, dashed, color=specialgold] (1) -- (3);
\draw[thick, dashed, color=specialgold] (2) -- (5);
\draw[thick, dashed, color=specialgold] (4) -- (6);
\draw[thick, dashed, color=specialgold] (5) -- (7);
\draw[thick, dashed, color=specialgold] (6) -- (8);
\end{tikzpicture}
\qquad $\rightarrow$  \qquad
\begin{tikzpicture}[scale = 1, baseline = -0.1cm]
\begin{scope}[xshift = 0cm, yshift = 1cm]
\draw[fill] (0, 0) circle (2pt) node[inner sep = 2pt] (1) {};
\draw[fill] (1, 0) circle (2pt) node[inner sep = 2pt] (2) {};
\draw[fill] (2, 0) circle (2pt) node[inner sep = 2pt] (4) {};
\path (0.5, 0) -- node[pos = 0.5, color = red] {$4$} (0.5, -1);
\path (1.5, 0) -- node[pos = 0.5, color = red] {$1$} (1.5, -1);
\end{scope}
\begin{scope}[xshift = 0cm, yshift = 0cm]
\draw[fill] (0, 0) circle (2pt) node[inner sep = 2pt] (3) {};
\draw[fill] (1, 0) circle (2pt) node[inner sep = 2pt] (5) {};
\draw[fill] (2, 0) circle (2pt) node[inner sep = 2pt] (6) {};
\path (1.5, 0) -- node[pos = 0.5, color = red] {$2$} (1.5, -1);
\end{scope}
\begin{scope}[xshift = 1cm, yshift = -1cm]
\draw[fill] (0, 0) circle (2pt) node[inner sep = 2pt] (7) {};
\draw[fill] (1, 0) circle (2pt) node[inner sep = 2pt] (8) {};
\end{scope}
\node[above] at (1) {$\scriptstyle 1$};
\node[above] at (2) {$\scriptstyle 2$};
\node[below] at (3) {$\scriptstyle 3$};
\node[above] at (4) {$\scriptstyle 4$};
\node[below left] at (5) {$\scriptstyle 5$};
\node[below right] at (6) {$\scriptstyle 6$};
\node[below] at (7) {$\scriptstyle 7$};
\node[below] at (8) {$\scriptstyle 8$};
\draw[thick] (1) -- (2);
\draw[thick] (2) -- (4);
\draw[thick] (3) -- (5);
\draw[thick] (5) -- (6);
\draw[thick] (7) -- (8);
\draw[thick, dashed, color=specialgold] (1) -- (3);
\draw[thick, dashed, color=specialgold] (2) -- (5);
\draw[thick, dashed, color=specialgold] (4) -- (6);
\draw[thick, dashed, color=specialgold] (5) -- (7);
\draw[thick, dashed, color=specialgold] (6) -- (8);
\end{tikzpicture}
\end{center}
illustrates the labeling of a tight splice with elements of $\FF_{5}^{\times}$ (shown in red).  This tight splice originates from the nonnesting set partition with blocks $\{124|356|78\}$ and has the shape of the shortened polyomino $(EESS, SESE)$. 

The other object in the bijection is a loopless binary matroid with a labeling.  The matroid is given as a bipartite graph (sometimes called a Stanley graph, see Section~\ref{sec:matroids} and~\cite[A135922]{OEIS}) on the set of rows and columns of the tight splice (above, there are two rows and two columns).  The labeling assigns an element of $\FF_{q}^{\times}$ to each edge in this graph.

\begin{rem}
There is a unique $\FF_{2}^{\times}$-labeling for each tight splice and each loopless binary matroid, so in effect there is a bijection with unlabeled objects:
\[
\mathscr{I}_{n}(2) \longleftrightarrow \left\{ (\SSS, \MMM) \;\middle|\; \begin{array}{c}
\text{$\SSS$ is a tight splice on $\{1, 2, \ldots, n\}$ and $\MMM$ is a loop-}\\
\text{less binary matroid on the rows and columns of $\SSS$.} 
\end{array}
\right\}.
\]
\end{rem}

The proof of Theorem~\ref{introthm:B} constructs each ideal of $\ut_{n}(\FF_{q})$ from a unique tuple $(\SSS, \sigma, \MMM, \tau)$.  Each labeled tight splice $(\SSS, \sigma)$ determines a family of ideals via certain shared properties.   Selecting an ideal from this family is a matter of $\FF_{q}$-linear algebra, which can be encoded into a labeled loopless binary matroid $(\MMM, \tau)$.  Considering the set of all normal subgroups in $\UT_{n}(\FF_{q})$ when $q$ is not prime, there is no equivalent encoding.  Accordingly, I am able to give an outline of sorts for the normal subgroups of $\UT_{n}(\FF_{q})$ in terms of tight splices (Corollary~\ref{cor:grpsplicecondition}), but not a nice bijective description.

A point of comparison is Marberg's work in~\cite{Marberg11b} on ``supernormal'' subgroups of $\UT_{n}(\FF_{q})$, which relate to two-sided ideals in the \textit{associative} algebra $\ut_{n}(\FF_{q})$ in the same way that ordinary normal subgroups relate to Lie algebra ideals.  For prime $q$, there is a bijection between supernormal subgroups and pairs comprising a nonnesting set partition and a certain type of $\FF_{q}$-vector space.   Each nonnesting set partition has a trivial tight splice, and in Theorem~\ref{introthm:B}, Marberg's subgroups are given by a trivial tight splice (which has a unique labeling) and an arbitrary labeled loopless binary matroid.  Thus tight splices are a heuristic for the difference in ideal structure between $\ut_{n}(\FF_{q})$ as a Lie algebra and $\ut_{n}(\FF_{q})$ as an associative algebra.  

The paper is organized as follows.  Section~\ref{sec:prelims} is a review of preliminary material, including nonnesting set partitions and matroids.  Section~\ref{sec:splices} is an introduction to tight splices.  In Section~\ref{sec:ideals}, I construct the ideals of $\ut_{n}(\FF_{q})$ from tight splices and loopless binary matroids, proving Theorem~\ref{introthm:B}.  Finally, Section~\ref{sec:normalsubgroups} concerns the normal subgroups of $\UT_{n}(\FF_{q})$, giving a formula for the total number of normal subgroups in $\UT_{n}(\FF_{q})$ (Theorem~\ref{thm:numsubgroups}) and several other results; a proof of Theorem~\ref{introthm:A} is also included for completeness.

My results suggest a few new lines of inquiry.  First, there is the question of normal subgroups in maximal unipotent subgroups for other Lie types; I suspect that my approach will adapt well to the classical cases.  Second, normal subgroups form a sublattice in the lattice of subgroups of $\UT_{n}(\FF_{q})$.  The structure of this sublattice is mostly opaque, but Section~\ref{sec:normalsubgrouplattices} describes the join-irreducible elements of the sublattice.  Finally, the connection between tight splices and shortened polyominoes remains somewhat mysterious.

\paragraph{Acknowledgments}

This paper is part of a Ph.D.~thesis undertaken at the University of Colorado Boulder, and I am extremely grateful for the help and insights of my thesis advisor Nathaniel Thiem throughout.  Additional thanks are due to Farid Aliniaeifard for his role in catalyzing this work and to an anonymous referee whose thoughtful suggestions---especially on the enumerative formulas herin---have greatly improved my exposition.

\section{Preliminaries}
\label{sec:prelims}

This section covers background material on nonnesting set partitions, ideals of $\ut_{n}(\FF_{q})$, and matroids.

\subsection{Nonnesting set partitions}
\label{sec:nnps}

Write $[n]  = \{1, 2, \ldots, n\}$, and let
\[
[[n]] = \Big\{\edge{i}{j}{} = (i, j) \;\Big|\; 1 \le i < j \le n \Big\} \subseteq [n] \times [n],
\]
the set of increasing edges in the complete directed graph on $[n]$.  When drawing elements of $[[n]]$ as edges, I will always orient them from left and right or from top to bottom.

There are a few partial orders on $[[n]]$.  First, the orders $\preceq_{L}$ and $\preceq_{R}$ given by
\[
\edge{i}{j}  \preceq_{L} \edge{r}{s} \quad \text{if $r \le i$ and $j = s$} \qquad\text{and}\qquad \edge{i}{j} \preceq_{R} \edge{r}{s} \quad \text{if $i = r$ and $j \le s$.} 
\]
Let $\preceq$ be the smallest partial order which extends both $\prec_{L}$ and $\prec_{R}$, so that
\[
\edge{i}{j} \preceq \edge{r}{s} \quad \text{if $r \le i < j \le s$.}
\]
This order will be the primary order on $[[n]]$ used in this paper.  The poset on $[[n]]$ under $\preceq$ is graded by the \emph{height function}, $\Ht(\displaystyle \edge{i}{j}) = j - i$.

A \emph{set partition of $[n]$} is a subset $\lambda \subseteq [[n]]$ which is an antichain in both $\preceq_{L}$ and $\preceq_{R}$.  This definition differs from the usual notion of a partition of the set $[n]$---a collection of disjoint subsets whose union is $[n]$---but is equivalent: from an antichain $\lambda \subseteq [[n]]$, the graph on $[n]$ with edge set $\lambda$ has connected components which divide $[n]$ into pairwise disjoint sets.  These connected components also determine $\lambda$ uniquely: since $\lambda$ must be an antichian, there edges will only occur between sequential elements of the same connected component.

The cardinality of a set partition $\lambda$ of $[n]$ is the number of edges in $\lambda$.

\begin{ex}\label{ex:nonnestingpartitions}
Let $\lambda = \{\edge{1}{2}, \edge{3}{5}, \edge{4}{6}\}\}$ and $\mu= \{\edge{1}{2}, \edge{2}{6}, \edge{3}{4}, \edge{4}{5}\}$ be set partitions of $[6]$, so that $|\lambda| = 3$ and $|\mu| = 4$.  As graphs, 
\[
\lambda = 
\begin{tikzpicture}[baseline = -0.1cm]
\draw[fill] (0, 0) circle (2pt) node[inner sep = 1pt] (1) {};
\draw[fill] (1, 0) circle (2pt) node[inner sep = 1pt] (2) {};
\draw[fill] (2, 0) circle (2pt) node[inner sep = 1pt] (3) {};
\draw[fill] (3, 0) circle (2pt) node[inner sep = 1pt] (4) {};
\draw[fill] (4, 0) circle (2pt) node[inner sep = 1pt] (5) {};
\draw[fill] (5, 0) circle (2pt) node[inner sep = 1pt] (6) {};
\node[below] at (1) {$\scriptstyle 1$};
\node[below] at (2) {$\scriptstyle 2$};
\node[below] at (3) {$\scriptstyle 3$};
\node[below] at (4) {$\scriptstyle 4$};
\node[below] at (5) {$\scriptstyle 5$};
\node[below] at (6) {$\scriptstyle 6$};
\draw[thick] (1) to[out = 45, in = 135] (2);
\draw[thick] (3) to[out = 45, in = 135] (5);
\draw[thick] (4) to[out = 45, in = 135] (6);
\end{tikzpicture}
\qquad\text{and}\qquad
\mu = 
\begin{tikzpicture}[baseline = -0.1cm]
\draw[fill] (0, 0) circle (2pt) node[inner sep = 1pt] (1) {};
\draw[fill] (1, 0) circle (2pt) node[inner sep = 1pt] (2) {};
\draw[fill] (2, 0) circle (2pt) node[inner sep = 1pt] (3) {};
\draw[fill] (3, 0) circle (2pt) node[inner sep = 1pt] (4) {};
\draw[fill] (4, 0) circle (2pt) node[inner sep = 1pt] (5) {};
\draw[fill] (5, 0) circle (2pt) node[inner sep = 1pt] (6) {};
\node[below] at (1) {$\scriptstyle 1$};
\node[below] at (2) {$\scriptstyle 2$};
\node[below] at (3) {$\scriptstyle 3$};
\node[below] at (4) {$\scriptstyle 4$};
\node[below] at (5) {$\scriptstyle 5$};
\node[below] at (6) {$\scriptstyle 6$};
\draw[thick] (1) to[out = 45, in = 135] (2);
\draw[thick] (2) to[out = 45, in = 135] (6);
\draw[thick] (3) to[out = 45, in = 135] (4);
\draw[thick] (4) to[out = 45, in = 135] (5);
\end{tikzpicture},
\]
with connected components $\{1, 2\}, \{3, 5\}, \{4, 6\}$ for $\lambda$ and $\{1, 2, 6\}, \{3, 4, 5\}$ for $\mu$.
\end{ex}

If a set partition $\lambda$ of $[n]$ is also an antichain in $\preceq$, say that $\lambda$ is \emph{nonnesting}.  Write
\[
\mathrm{NNSP}_{n} = \{\text{nonnesting set partitions of $[n]$}\}.
\]
For $\lambda \in \mathrm{NNSP}_{n}$, drawing the elements of $[n]$ in increasing order from left to right will ensure that no edge of $\lambda$ starts and ends between the endpoints of another (``nests'').  In Example~\ref{ex:nonnestingpartitions}, $\lambda$ is nonnesting but $\mu$ is not, as both $\edge{3}{4} \prec \edge{2}{6}$ and $\edge{4}{5} \prec \edge{2}{6}$.

A subset $\FFF \subseteq [[n]]$ is a \emph{upper set} (with respect to $\preceq$) if is is upwardly closed: if $\edge{i}{j} \in \FFF$ and $\edge{r}{s} \succeq \edge{i}{j}$, then $\FFF$ must also contain $\edge{r}{s}$.  For $\lambda \subseteq [[n]]$ and $\ell \in \ZZ_{\ge 0}$, let
\[
\upnum{\lambda}{\ell} = \bigcup_{\edge{i}{j} \in \lambda}  \{ {\textstyle \edge{r}{s} \in [[n]] \;|\; \edge{r}{s} \succeq \edge{i}{j},\, \Ht(\edge{r}{s}) \ge \Ht(\edge{i}{j}) + \ell}\},
\]
be the upper set of elements at least $\ell$ covering relations in $\preceq$ above some $\edge{i}{j} \in \lambda$.  An example of this construction can be seen in Figure~\ref{fig:filtercofilter}.

Write $\up{\lambda} = \upnum{\lambda}{0}$ for the upper set generated by the set $\lambda$. The fundamental theorem of finite distributive lattices states that there is a bijection
\[
\begin{array}{rcl}
\mathrm{NNSP}_{n} & \longleftrightarrow & \{\text{upper sets of $[[n]]$}\} \\
\lambda & \longmapsto & \up{\lambda} \\
\min(\FFF) & \longmapsfrom & \FFF
\end{array}
\]
where $\min(\FFF)$ denotes the set of $\preceq$-minimal elements in $\FFF$.

\begin{figure}
\centering
\begin{center}
\includegraphics[page=1]{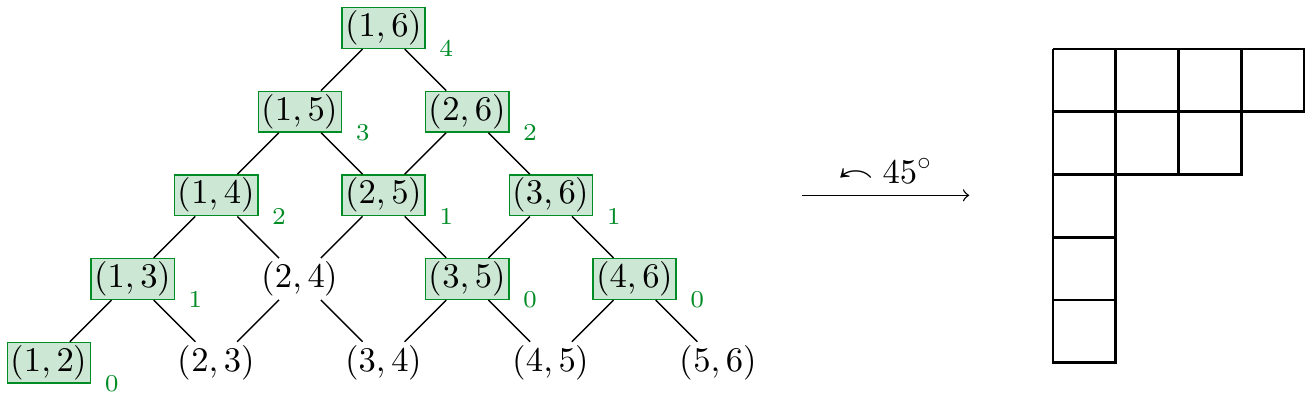}
\end{center}
\caption{The Hasse diagram of $[[6]]$ with elements of the upper set $\up{\lambda}$ generated by the nonnesting set partition $\lambda = \{\edge{1}{2}, \edge{3}{5}, \edge{4}{6}\}$ boxed, and the corresponding Ferrer's shape.  Each $\edge{i}{j} \in \up{\lambda}$ is labeled by the integer $\max\{\ell\in \ZZ_{\ge 0} \;|\; \edge{i}{j} \in \upnum{\lambda}{\ell}\}$.
}
\label{fig:filtercofilter}
\end{figure}

\begin{rem}
The number of upper sets of $[[n]]$ is the $n$th Catalan number $\frac{1}{n+1}\binom{2n}{n}$.  One proof (of many, see~\cite[Problem 178]{StanleyCat}) is as follows:  draw upper sets as in Figure~\ref{fig:filtercofilter} and then rotate counterclockwise by forty-five degrees; the result is a Ferrer's shape which fits inside the shape $(n-1, n-2, \ldots, 1)$.  For $\lambda \in \mathrm{NNSP}_{n}$, each $\edge{i}{j} \in \lambda$ corresponds to a removable corner of the shape associated to $\up{\lambda}$.
\end{rem}

\subsection{Normal subgroups and ideals from nonnesting set partitions}
\label{sec:npsgs}

The set $[[n]]$ indexes the above-diagonal entries of an $n \times n$ matrix, so
\[
\ut_{n}(\FF_{q}) = \bigoplus_{\edge{i}{j} \in [[n]]} \FF_{q} e_{i, j} \qquad \text{with $e_{i, j} = (\delta_{i, r}\delta_{j, s})_{1 \le r, s \le n}$},
\]
where $\delta$ is the Kronecker delta.  The \emph{support} of a subset $\mathfrak{n} \subseteq \ut_{n}(\FF_{q})$ or $1 +  \mathfrak{n} \subseteq \UT_{n}(\FF_{q})$ is
\[
\operatorname{supp}(\mathfrak{n}) = \operatorname{supp}(1 + \mathfrak{n}) = \Big\{\edge{i}{j} \in [[n]] \;\Big|\; \text{$a_{i, j} \neq 0$ for some $a \in \mathfrak{n}$}\Big\}.
\]
It is straightforward to see that this set will be an upper set in $[[n]]$ if $n$ is closed under all inner derivations $b \mapsto [xe_{i, j}, b]$ with $x \in \FF_{q}$ and $\edge{j}{k} \in [[n]]$.  Consequently, $\operatorname{supp}(\mathfrak{n})$ is an upper set if $1 + \mathfrak{n}$ is closed under conjugation by $\UT_{n}(\FF_{q})$, as
\[
(1 + xe_{j, k}) (1+b) (1 + xe_{j, k})^{-1} =  1 + b + [xe_{j, k}, b],
\]
for each $b \in \ut_{n}(\FF_{q})$, $(j, k) \in [[n]]$, and $x \in \FF_{q}$.

For ideals $\mathfrak{n}, \mathfrak{m} \subseteq \ut_{n}$, $\operatorname{supp}(\mathfrak{n} + \mathfrak{m}) = \operatorname{supp}(\mathfrak{n})\cup \operatorname{supp}(\mathfrak{m})$, so for each upper set $\FFF$ there is a maximal ideal
\[
\ut_{\FFF} = \bigoplus_{\edge{i}{j} \in \FFF} \FF_{q}e_{i, j}
\]
having support $\FFF$.  Under $\uparrow$ the set $\mathrm{NNSP}_{n}$ indexes these ideals: let
\[
\ut_{\lambda} = \ut_{\up{\lambda}} \qquad\text{for each $\lambda \in \mathrm{NNSP}_{n}$.}
\]

By~\cite[Lemma 4.1]{Marberg11b}, the set of nonnesting set partitions of $[n]$ (or upper sets of $[[n]]$) also indexes a family of normal subgroups of $\UT_{n}(\FF_{q})$:
\[
\UT_{\lambda} = \UT_{\up{\lambda}} = 1 + \ut_{\up{\lambda}}\qquad\text{for each $\lambda \in \mathrm{NNSP}_{n}$.}
\]
Call these subgroups \emph{normal pattern subgroups}.  If $q \neq 2$ these are the only normal subgroups of $\UT_{n}(\FF_{q})$ which are closed under conjugation by every invertible diagonal matrix~\cite[Proposition 2.1]{DiaconisThiem}.


\begin{ex}\label{ex:patternthings}
Several examples of normal pattern subgroups can be found below.
\begin{enumerate}
\item For $n = 6$ and $\lambda = \{\edge{1}{2}{}, \edge{3}{5}{}, \edge{4}{6}{}\}$ as in Example~\ref{ex:nonnestingpartitions}, the normal pattern subgroup corresponding to $\lambda$ is
\begin{center}
$\UT_{\lambda} = $\;
\begin{tikzpicture}[scale = 0.5, baseline = -1.6cm]
\draw[thick] (0.3, 0.1) -- (-0, 0.1) -- (-0, -6.1) -- (0.3, -6.1);
\draw[thick] (5.8, 0.1) -- (6.1, 0.1) -- (6.1, -6.1) -- (5.8, -6.1);
\draw[thick] (0.2, 0) -- (1, 0) -- (1, -1) -- (4, -1) -- (4, -3) -- (5, -3) -- (5, -4) -- (6, -4) -- (6, -6);
\node at (1.5, -0.5) {$\ast$};
\node at (2.5, -0.5) {$\ast$};
\node at (3.5, -0.5) {$\ast$};
\node at (4.5, -0.5) {$\ast$};
\node at (5.5, -0.5) {$\ast$};
\node at (2.5, -1.5) {$0$};
\node at (3.5, -1.5) {$0$};
\node at (4.5, -1.5) {$\ast$};
\node at (5.5, -1.5) {$\ast$};
\node at (3.5, -2.5) {$0$};
\node at (4.5, -2.5) {$\ast$};
\node at (5.5, -2.5) {$\ast$};
\node at (4.5, -3.5) {$0$};
\node at (5.5, -3.5) {$\ast$};
\node at (5.5, -4.5) {$0$};
\foreach \x in {1, 2, 3, 4, 5}{\pgfmathsetmacro{\z}{\x-1} \foreach \y in {0, ..., \z}{ \node at ({\y + 0.5}, {-\x - 0.5}) {$0$};}}
\foreach \x in {0, 1, 2, 3, 4, 5}{\node at ({\x+0.5}, {-\x-0.5}) {$1$};}
\end{tikzpicture}\;.
\end{center}
I have drawn a line below the entries corresponding to elements of $\up{\lambda}$ to emphasize the connection between this upper set and $\UT_{\lambda}$.

\item Consider the nonnesting set partitions $\emptyset$, $\lambda = \{\edge{1}{n}\}$, and $\mu = \{\edge{i}{i+1} \;|\; 1 \le i < n\}$.  The corresponding normal pattern subgroups are
\[
\UT_{\emptyset} = 1, \quad \UT_{\lambda} = Z(\UT_{n}(\FF_{q})),\quad\text{and}\quad \UT_{\mu} = \UT_{n}(\FF_{q}),
\]
where $Z(\UT_{n}(\FF_{q}))$ is the center of $\UT_{n}(\FF_{q})$.

\end{enumerate}
\end{ex}

The sets $\upnum{\lambda}{\ell}$ have an interpretation in terms of ideals and normal pattern subgroups:
\[
 \ut_{\upnum{\lambda}{\ell}}  = \underbrace{[\ut_{n}, [\ut_{n}, \cdots [\ut_{n}}_{\text{$\ell$ times}}, \ut_{\lambda}] \cdots ]]]\quad\text{and}\quad \UT_{\upnum{\lambda}{\ell}} = \underbrace{[\UT_{n}, [\UT_{n}, \cdots [\UT_{n}}_{\text{$\ell$ times}}, \UT_{\lambda}] \cdots ]]],
\]
where $[\cdot, \cdot]$ first denotes the Lie bracket, and then the group commutator.

\subsection{Loopless binary matroids}
\label{sec:matroids}

The construction of ideals of $\ut_{n}(\FF_{q})$ in Section~\ref{sec:ideals} involves indexing certain $\FF_{q}$-vector spaces.  If $q = 2$ this can be done with a certain class of matroid; more generally there is a $q$-analogue for this class, defined below.  This approach is due to the preprint~\cite{HalversonThiem} which is not widely available; for the sake of completeness all statements from this source will be proved.

A \emph{matroid} is a pair $(K, \BBB)$ consisting of a \emph{ground set} $K$ and a collection $\BBB$ of subsets of $K$ called \emph{bases}, satisfying
\begin{enumerate}[label=(M\arabic*),  ref=(M\arabic*)]
\item\label{M1} $\BBB \neq \emptyset$ and

\item\label{M2} for $A, B \in \BBB$ and $a \in K$ with $a \in A$ with $a \notin B$, there is at least one $b \in B$ with $b \notin A$ such that $A \cup \{b\} \setminus \{a\} \in \BBB$.

\end{enumerate}
A \emph{loop} of $(K, \BBB)$ is an element of $K$ which is not contained in any basis of $(K, \BBB)$.  The matroid $(K, \BBB)$ is \emph{loopless} if it has no loops.

A function $\phi: K \to \FF_{q}^{r}$, $r \ge 0$ is an \emph{$\FF_{q}$-representation} of $(K, \BBB)$ if each $A \subseteq K$ satisfies:
\[
A \in \BBB \quad\text{if and only if}\quad \text{$\phi(A)$ is an $\FF_{q}$-basis for $\FF_{q}\operatorname{-span}(\phi(K))$}.
\]
A matroid $(K, \BBB)$ which has an $\FF_{2}$-representation is said to be \emph{binary}. 

\begin{lem}[{\cite[Lemma 3.3]{HalversonThiem}}]
\label{lem:binarymatroidrep}
Let $(K, \BBB)$ be a binary matroid with a totally ordered ground set $K$, and take $A \in \BBB$ to be lexicographically minimal.  Then $(K, \BBB)$ is entirely determined by 
\[
(A; \{E_{b} \;|\; b \notin A\}) \qquad\text{with $E_{b} = \{a \in A \;|\; (A \cup \{b\} \setminus \{a\}) \in \BBB\}$.}
\]
Also, $a < b$ for each $a \in E_{b}$.
\end{lem}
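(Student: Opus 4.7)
The plan is to prove two independent claims: first, that $a < b$ whenever $a \in E_{b}$, via a lex exchange argument using the minimality of $A$; and second, that $(K, \BBB)$ can be recovered from $(A; \{E_{b}\})$, via an analysis of fundamental circuits with respect to a binary representation. I would treat these in order.

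For the ordering claim, I would suppose for contradiction that some $a \in E_{b}$ has $a > b$ (they are distinct as one is in $A$ and the other is not). Then $A' := A \cup \{b\} \setminus \{a\} \in \BBB$ by definition, and $A \triangle A' = \{a, b\}$ with minimum $b \in A'$. Sorting $A$ and $A'$ into increasing sequences, the first position at which they disagree has $b$ on the $A'$ side and some element strictly larger than $b$ on the $A$ side, so $A'$ is lexicographically smaller than $A$, contradicting the minimality of $A$.

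For the reconstruction claim, I would fix any $\FF_{2}$-representation $\phi : K \to \FF_{2}^{r}$ of $(K, \BBB)$. Because $A \in \BBB$, the image $\phi(A)$ is an $\FF_{2}$-basis of $V := \FF_{2}\operatorname{-span}(\phi(K))$, and each $b \notin A$ has a unique expansion $\phi(b) = \sum_{a \in A} c_{a, b}\phi(a)$ with $c_{a, b} \in \FF_{2}$. The core of the argument is the equivalence $c_{a, b} = 1 \iff a \in E_{b}$: if $c_{a, b} = 0$, then $\phi(b) \in \FF_{2}\operatorname{-span}(\phi(A) \setminus \{\phi(a)\})$, so $\phi(A \cup \{b\} \setminus \{a\})$ is stuck in an $(r-1)$-dimensional subspace and cannot be a basis; if $c_{a, b} = 1$, then in characteristic $2$ we rearrange to $\phi(a) = \phi(b) + \sum_{a' \neq a} c_{a', b}\phi(a')$, so $\phi(A \cup \{b\} \setminus \{a\})$ spans $\phi(a)$ and hence all of $V$, making it a basis. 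Thus $E_{b}$ is exactly the support of the $\FF_{2}$-expansion of $\phi(b)$, so the data $(A; \{E_{b}\})$ determines every $c_{a, b}$ and yields an explicit model $\phi': K \to \FF_{2}^{A}$ with $\phi'(a) = e_{a}$ for $a \in A$ and $\phi'(b) = \sum_{a \in E_{b}} e_{a}$ for $b \notin A$; the linear isomorphism $\phi(a) \mapsto e_{a}$ identifies $\phi'$ with $\phi$ up to change of target basis, so $\phi'$ represents the same matroid, which therefore depends only on $(A; \{E_{b}\})$.

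The main subtlety I anticipate is really just the role of binariness: the equivalence $c_{a, b} = 1 \iff a \in E_{b}$ relies on $1$ being the only nonzero element of $\FF_{2}$. Over a larger field $\FF_{q}$, the analogous statement is $c_{a, b} \neq 0 \iff a \in E_{b}$, and the actual values of $c_{a, b}$ cannot be recovered from the sets $E_{b}$ alone; this is precisely why binariness (not mere representability) appears in the hypothesis, and foreshadows the $\FF_{q}^{\times}$-labeled refinements of these matroids used later in the paper when $q > 2$.
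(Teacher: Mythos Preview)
Your proposal is correct and follows essentially the same approach as the paper: expand each $\phi(b)$ in the basis $\phi(A)$, identify $E_b$ with the support of that expansion (using that $\FF_2$ has a unique nonzero element), and deduce both the reconstruction of the matroid and the ordering condition from lexicographic minimality of $A$. The only cosmetic difference is that you prove the ordering claim first by a direct exchange argument, whereas the paper folds it into the representation analysis as a one-line consequence; your added remark on why binariness is essential is a nice touch but not in the paper's proof.
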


As stated at the beginning of the section, a proof will be included for completeness.

\begin{proof}
Let $\phi: K \to \FF_{2}^{r}$ be a representation of $(K, \BBB)$.  By assumption $\phi(A)$ is a basis for $\FF_{q}\operatorname{-span}(\phi(K))$, so for each $b \notin A$
\begin{equation}\label{eq:binarymatroidrepresentation}
\phi(b) = \sum_{a \in A} \alpha_{a} \phi(a) \qquad\text{for some $\alpha_{a} \in \FF_{2}$}.
\end{equation}
The coefficient $\alpha_{a} = 1$ if and only if $\phi\left(A \cup \{b\} \setminus \{a\}\right)$ is a basis of $\FF_{q}\operatorname{-span}(\phi(K))$, so $E_{b} = \{a \in A \;|\; \alpha_{a} = 1\}$.  Since $A$ is the unique lexicographically minimal element of $\BBB$, this also implies that $a < b$ for each $a \in E_{b}$.  Finally, $\phi(b)$ is determined by $E_{b}$ and $\phi(A)$, so the linear dependence relations on the set $\phi(K)$ can be deduced entirely from $\{E_{b} \;|\; b \notin A\}$.
\end{proof}

In fact, any pair $(A; \{E_{b} \;|\; b \notin A\})$ satisfying $a< b$ for each $a \in E_{b}$ uniquely determines a binary matroid via the $\FF_{2}$-representation in equation~\eqref{eq:binarymatroidrepresentation}.

\subsubsection{A graph model for loopless binary matroids}
\label{sec:matroidgraphs}

A construction of~\cite{HalversonThiem} produces a unique graph from each loopless binary matroid; labeling the edges of these graphs will give the ``labeled loopless binary matroids'' of the main result.  This section gives an exposition of this construction, including proofs for completeness.  A similar construction for arbitrary matroids is also know, see~\cite[Section 6.4]{Oxley}.  

A directed bipartite graph $(U, V, E)$ on the vertex set $U \sqcup V$ with edges $E$ is \emph{unidirectional} if $E \subseteq U \times V$ and for each $v \in V$, there is at least one edge $(u, v) \in E$.  For a set $K$, let
\[
\mathscr{G}_{K} = \{\text{Unidirectional bipartite graphs $(U, V, E)$ with $U \sqcup V = K$}\}.
\]
Now, suppose that $K$ is a totally ordered set, and let
\[
\overset{\to}{\mathscr{G}}_{K} = \{(U, V, E) \in \mathscr{G}_{K} \:|\; \text{$u < v$ for every $(u, v) \in E$}\}.
\]

\begin{ex}\label{ex:unibipartite}
Let $K = \{1 < 2 < 3\}$.  There are six members of $\overset{\to}{\mathscr{G}}_{K}$:
\begin{center}
\begin{tikzpicture}[baseline = 0cm]
\draw[fill] (0, 0.75) circle (2pt) node[inner sep = 2pt] (T1) {};
\draw[fill] (0, 0) circle (2pt) node[inner sep = 2pt] (M) {};
\draw[fill] (0, -0.75) circle (2pt) node[inner sep = 2pt] (T2) {};
\node[left] at (T1) {$1$};
\node[left] at (M) {$2$};
\node[left] at (T2) {$3$};
\end{tikzpicture}\;,
\quad
\begin{tikzpicture}[baseline = 0.5*0.75cm]
\draw[fill] (0, 0.75) circle (2pt) node[inner sep = 2pt] (T1) {};
\draw[fill] (0, 0) circle (2pt) node[inner sep = 2pt] (M) {};
\draw[fill] (1, 0.75) circle (2pt) node[inner sep = 2pt] (T2) {};
\node[left] at (T1) {$1$};
\node[left] at (M) {$2$};
\node[right] at (T2) {$3$};
\draw[->] (T1) -- node[pos = 0.5, above] {} (T2);
\end{tikzpicture},
\quad
\begin{tikzpicture}[baseline = 0.5*0.75cm]
\draw[fill] (0, 0.75) circle (2pt) node[inner sep = 2pt] (T1) {};
\draw[fill] (0, 0) circle (2pt) node[inner sep = 2pt] (M) {};
\draw[fill] (1, 0.75) circle (2pt) node[inner sep = 2pt] (T2) {};
\node[left] at (T1) {$1$};
\node[left] at (M) {$2$};
\node[right] at (T2) {$3$};
\draw[->] (M) -- node[pos = 0.5, above] {} (T2);
\end{tikzpicture},
\quad 
\begin{tikzpicture}[baseline = 0.5*0.75cm]
\draw[fill] (0, 0.75) circle (2pt) node[inner sep = 2pt] (T1) {};
\draw[fill] (0, 0) circle (2pt) node[inner sep = 2pt] (M) {};
\draw[fill] (1, 0.75) circle (2pt) node[inner sep = 2pt] (T2) {};
\node[left] at (T1) {$1$};
\node[left] at (M) {$2$};
\node[right] at (T2) {$3$};
\draw[->] (T1) -- node[pos = 0.5, above] {} (T2);
\draw[->] (M) -- node[pos = 0.2, above] {} (T2);
\end{tikzpicture},
\quad
\begin{tikzpicture}[baseline = 0.5*0.75cm]
\draw[fill] (0, 0.75) circle (2pt) node[inner sep = 2pt] (T1) {};
\draw[fill] (1, 0.75) circle (2pt) node[inner sep = 2pt] (M) {};
\draw[fill] (0, 0) circle (2pt) node[inner sep = 2pt] (T2) {};
\node[left] at (T1) {$1$};
\node[right] at (M) {$2$};
\node[left] at (T2) {$3$};
\draw[->] (T1) -- node[pos = 0.5, above] {} (M);
\end{tikzpicture},\quad\text{and}
\quad
\begin{tikzpicture}[baseline = 0.5*0.75cm]
\draw[fill] (0, 0.75) circle (2pt) node[inner sep = 2pt] (T1) {};
\draw[fill] (1, 0.75) circle (2pt) node[inner sep = 2pt] (M) {};
\draw[fill] (1, 0) circle (2pt) node[inner sep = 2pt] (T2) {};
\node[left] at (T1) {$1$};
\node[right] at (M) {$2$};
\node[right] at (T2) {$3$};
\draw[->] (T1) -- node[pos = 0.5, above] {} (M);
\draw[->] (T1) -- node[pos = 0.8, above] {} (T2);
\end{tikzpicture}.
\end{center}
\end{ex}

\begin{rem}
When $K = \{1 < 2 < \cdots < k \}$, the set $\overset{\to}{\mathscr{G}}_{K}$ is described in~\cite[A135922]{OEIS} under the name ``Stanley graphs,'' though this name does not appear elsewhere in the literature.  
\end{rem}

\begin{prop}[{\cite[Proposition 3.5]{HalversonThiem}}]\label{prop:matroidgraphbijection}
Let $K$ be a totally ordered set.  The maps
\[
\begin{array}{rcl}
\left\{ \text{Loopless binary matroids on $K$}\right\} & \longleftrightarrow & \overset{\to}{\mathscr{G}}_{K} \\[0.5em]
(A; \{E_{v} \;|\; v \notin A\}) & \longmapsto & (A, K \setminus A, \{(u, v) \;|\; v \notin A,\; u \in E_{v}\}) \\[0.35em]
(U; \{E \cap (U \times \{v\}) \;|\; v \in V\})& \longmapsfrom & (U, V, E)
\end{array}
\]
are mutually inverse, giving a bijection.
\end{prop}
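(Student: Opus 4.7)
The plan is to leverage Lemma~\ref{lem:binarymatroidrep}, which already encodes each loopless binary matroid with totally ordered ground set as a pair $(A; \{E_v \mid v \notin A\})$ with the constraint $u < v$ whenever $u \in E_v$. So the main work is to match this encoding with the combinatorial data of a unidirectional bipartite graph in $\overset{\to}{\mathscr{G}}_K$, and to verify that the inverse map does produce a loopless binary matroid.

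First I would check that the forward map is well-defined. Starting from a loopless binary matroid, Lemma~\ref{lem:binarymatroidrep} gives the data $(A; \{E_v \mid v \notin A\})$ with $u < v$ for every $u \in E_v$, and the graph $(A, K \setminus A, \{(u, v) \mid u \in E_v\})$ is then a bipartite graph on $U \sqcup V = K$ with all edges oriented $U \to V$ and $u < v$. The unidirectional condition requires each $v \in V$ to be the head of at least one edge, i.e.\ $E_v \neq \emptyset$. But $E_v = \emptyset$ would mean that $v$ lies in no basis containing $A \setminus \{u\}$ for any $u$, and a short exchange argument using the standard $\FF_2$-representation $\phi(v) = \sum_{u \in E_v} \phi(u)$ shows this forces $\phi(v) = 0$, so $v$ would be a loop, contradicting the looplessness hypothesis. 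Hence the image lies in $\overset{\to}{\mathscr{G}}_K$.

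Next I would treat the backward map. Given $(U, V, E) \in \overset{\to}{\mathscr{G}}_K$, set $A = U$ and $E_v = \{u \in U \mid (u, v) \in E\}$ for each $v \in V$. Define $\phi : K \to \FF_2^{|U|}$ by $\phi(u) = e_u$ for $u \in U$ and $\phi(v) = \sum_{u \in E_v} e_u$ for $v \in V$; this is precisely equation~\eqref{eq:binarymatroidrepresentation}. Taking as bases the subsets $B \subseteq K$ whose image under $\phi$ is a basis of $\FF_2\operatorname{-span}(\phi(K)) = \FF_2^{|U|}$ gives a binary matroid. It is loopless since each $\phi(v)$ is nonzero, using the unidirectional hypothesis $E_v \neq \emptyset$. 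The condition $u < v$ for every edge then guarantees that $A = U$ is the lex-min basis: any other basis $B$ must contain some $v \in V$, and replacing $v$ by any $u \in E_v$ with $u \notin B$ yields a strictly lex-smaller basis, so iterating returns $A$. Hence Lemma~\ref{lem:binarymatroidrep} applies and recovers the input data $(U; \{E_v\})$ from this matroid.

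Finally, the two maps are mutually inverse by direct inspection: the forward map turns the pair $(A; \{E_v\})$ into edges $\{(u, v) \mid u \in E_v\}$, while the backward map reads off $A = U$ and $E_v = \{u \mid (u, v) \in E\}$. The only step I expect to require care is verifying that the matroid produced by the backward map has $A = U$ as its lex-min basis; everything else reduces to a straightforward unpacking of Lemma~\ref{lem:binarymatroidrep} and the definition of $\overset{\to}{\mathscr{G}}_K$.
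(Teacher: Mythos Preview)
Your proposal is correct and follows the same approach as the paper. The paper itself does not include a separate proof environment for this proposition; its argument consists of Lemma~\ref{lem:binarymatroidrep} together with the one-sentence remark immediately following it (that any pair $(A;\{E_b\})$ with $a<b$ for $a\in E_b$ determines a binary matroid via the representation~\eqref{eq:binarymatroidrepresentation}). Your write-up expands precisely this argument, supplying the two checks the paper leaves implicit: that looplessness forces $E_v\neq\emptyset$ (so the forward map lands in $\overset{\to}{\mathscr{G}}_K$), and that $U$ is indeed the lex-minimal basis of the matroid built from $(U,V,E)$ (so Lemma~\ref{lem:binarymatroidrep} recovers the original graph data).
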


\begin{ex}
With $K = \{1 < 2 < 3\}$, Example~\ref{ex:unibipartite} shows the six members of $\overset{\to}{\mathscr{G}}_{K}$.  Under Proposition~\ref{prop:matroidgraphbijection}, the basis of each corresponding loopless binary matroid is, respectively:
\begin{multline*}
\{\{1, 2, 3\}\},\quad\{\{1, 2\}, \{2, 3\}\},\quad \{\{1, 2\}, \{1, 3\}\},\quad\{\{1, 2\}, \{1, 3\}, \{2, 3\}\},\\ \quad\{\{1, 3\}, \{2, 3\}\},\; \text{and} \quad \{\{1\}, \{2\}, \{3\}\}.
\end{multline*}
\end{ex}

An $\FF_{q}^{\times}$-labeling for an element $(U, V, E) \in \overset{\to}{\mathscr{G}}_{K}$ is a function $\tau: E \to \FF_{q}^{\times}$.  In light of Proposition~\ref{prop:matroidgraphbijection}, define the set of \emph{$\FF_{q}^{\times}$-labeled loopless binary matroids} on $K$ to be
\[
\overset{\to}{\mathscr{G}}_{K}(q) = \{(\MMM, \tau) \;|\; \MMM = (U, V, E) \in \overset{\to}{\mathscr{G}}_{K},\; \tau: E \to \FF_{q}^{\times}\}.
\]
Draw elements of $\overset{\to}{\mathscr{G}}_{K}(q)$ as edge-labeled graphs.

\begin{ex}
Taking $K =\{1 < 2 < 3\}$ as in the previous example, several elements of $\overset{\to}{\mathscr{G}}_{K}(5)$ are shown below.
\begin{align*}
\text{With}\;\MMM = \begin{tikzpicture}[baseline = 0.5*0.75cm]
\draw[fill] (0, 0.75) circle (2pt) node[inner sep = 2pt] (T1) {};
\draw[fill] (0, 0) circle (2pt) node[inner sep = 2pt] (M) {};
\draw[fill] (1, 0.75) circle (2pt) node[inner sep = 2pt] (T2) {};
\node[left] at (T1) {$1$};
\node[left] at (M) {$2$};
\node[right] at (T2) {$3$};
\draw[->] (M) -- node[pos = 0.6, below] {} (T2);
\end{tikzpicture}
\;\;\text{and $\tau(2, 3) = 4$:}& \qquad
(\MMM, \tau) = \begin{tikzpicture}[baseline = 0.5*0.75cm]
\draw[fill] (0, 0.75) circle (2pt) node[inner sep = 2pt] (T1) {};
\draw[fill] (0, 0) circle (2pt) node[inner sep = 2pt] (M) {};
\draw[fill] (1, 0.75) circle (2pt) node[inner sep = 2pt] (T2) {};
\node[left] at (T1) {$1$};
\node[left] at (M) {$2$};
\node[right] at (T2) {$3$};
\draw[->] (M) -- node[pos = 0.6, below] {\color{red} $4$} (T2);
\end{tikzpicture}. 
\\[0.25em]
\text{With}\;\MMM = \begin{tikzpicture}[baseline = 0.5*0.75cm]
\draw[fill] (0, 0.75) circle (2pt) node[inner sep = 2pt] (T1) {};
\draw[fill] (0, 0) circle (2pt) node[inner sep = 2pt] (M) {};
\draw[fill] (1, 0.75) circle (2pt) node[inner sep = 2pt] (T2) {};
\node[left] at (T1) {$1$};
\node[left] at (M) {$2$};
\node[right] at (T2) {$3$};
\draw[->] (T1) -- node[pos = 0.5, above] {} (T2);
\draw[->] (M) -- node[pos = 0.6, below] {} (T2);
\end{tikzpicture}
,\;\;\text{$\tau(1, 3) = 1$, and $\tau(2, 3) = 2$:}&\qquad
(\MMM, \tau) = \begin{tikzpicture}[baseline = 0.5*0.75cm]
\draw[fill] (0, 0.75) circle (2pt) node[inner sep = 2pt] (T1) {};
\draw[fill] (0, 0) circle (2pt) node[inner sep = 2pt] (M) {};
\draw[fill] (1, 0.75) circle (2pt) node[inner sep = 2pt] (T2) {};
\node[left] at (T1) {$1$};
\node[left] at (M) {$2$};
\node[right] at (T2) {$3$};
\draw[->] (T1) -- node[pos = 0.5, above] {\color{red} $1$} (T2);
\draw[->] (M) -- node[pos = 0.6, below] {\color{red} $2$} (T2);
\end{tikzpicture}. 
\\[0.25em]
\text{With}\;\MMM = \begin{tikzpicture}[baseline = 0.5*0.75cm]
\draw[fill] (0, 0.75) circle (2pt) node[inner sep = 2pt] (T1) {};
\draw[fill] (1, 0.75) circle (2pt) node[inner sep = 2pt] (M) {};
\draw[fill] (0, 0) circle (2pt) node[inner sep = 2pt] (T2) {};
\node[left] at (T1) {$1$};
\node[right] at (M) {$2$};
\node[left] at (T2) {$3$};
\draw[->] (T1) -- node[pos = 0.5, above] {} (M);
\end{tikzpicture}
\;\;\text{and $\tau(1, 2) = 3$:} & \qquad
(\MMM, \tau) = \begin{tikzpicture}[baseline = 0.5*0.75cm]
\draw[fill] (0, 0.75) circle (2pt) node[inner sep = 2pt] (T1) {};
\draw[fill] (1, 0.75) circle (2pt) node[inner sep = 2pt] (M) {};
\draw[fill] (0, 0) circle (2pt) node[inner sep = 2pt] (T2) {};
\node[left] at (T1) {$1$};
\node[right] at (M) {$2$};
\node[left] at (T2) {$3$};
\draw[->] (T1) -- node[pos = 0.5, above] {\color{red} $3$} (M);
\end{tikzpicture}.
\end{align*}
\end{ex}

The \emph{$q$-Stirling numbers of the second kind} are defined by
\begin{equation}
\label{eq:qstirling}
\qstirling{k}{j}_{q} =
\sum_{\substack{V \subseteq [k] \\ |V| = k - j}} \prod_{v \in V} \frac{q^{i_{V}(v)} - 1}{q-1}
\qquad \text{with} \qquad \operatorname{i}_{V}(v) = |\{w \in [k] \setminus V \;|\; w < v\}|
\end{equation}
for $k \ge j \ge 0$.  Note that subsets of $[k]$ which contain $1$ contribute nothing to the sum; also note that $\qstirling{k}{j}_{q}$ is a polynomial expression in $q-1$ with positive coefficients, as
\[
\frac{q^{s} - 1}{q-1} = \sum_{i = 0}^{s-1} q^{i} = \sum_{i = 0}^{s-1} \binom{s}{i+1} (q-1)^{i} \qquad \text{for $s \ge 0$.}
\]

\begin{rem}
Formula~\eqref{eq:qstirling} is due to~\cite{Gould} (see~\cite[{Equation 10.1}]{CaiReaddy} for a recent statement) and is a specialization of the symmetric function $h_{k - j}$ of~\cite[Section I.2]{Macdonald}.  Other definitions, including a Stirling-like recurrence, can be found in~\cite{CaiReaddy} and~\cite{WachsWhite}, among other sources.
\end{rem}

\begin{prop}[{\cite[Corollary 3.4]{HalversonThiem}}]
\label{prop:matroidcount}
For a totally ordered set $K$ with $k$ elements,
\[
\big| \overset{\to}{\mathscr{G}}_{K}(q) \big| = \sum_{j = 0}^{k} (q-1)^{k-j} \qstirling{k}{j}_{q}.
\]
\end{prop}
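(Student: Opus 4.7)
The plan is to combine the bijection of Proposition~\ref{prop:matroidgraphbijection} with a direct enumeration of edge-labeled unidirectional bipartite graphs in $\overset{\to}{\mathscr{G}}_{K}$. Specifying an element of $\overset{\to}{\mathscr{G}}_{K}(q)$ amounts to choosing $V \subseteq K$ (putting $U = K \setminus V$) and then, independently for each $v \in V$, choosing a nonempty subset $E_{v} \subseteq \{u \in K \setminus V : u < v\}$ together with an $\FF_{q}^{\times}$-labeling of $E_{v}$.

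For fixed $V$ and $v \in V$, let $i_{V}(v) = |\{u \in K \setminus V : u < v\}|$ as in~\eqref{eq:qstirling}. Each of the $i_{V}(v)$ candidate sources is either omitted (one option) or included carrying one of $q-1$ labels, yielding $q^{i_{V}(v)}$ total configurations; subtracting the single empty choice gives $q^{i_{V}(v)} - 1$ weighted options for the in-edges at $v$. Multiplying over $v \in V$ and summing over all subsets, this would give
\[
\big| \overset{\to}{\mathscr{G}}_{K}(q) \big| = \sum_{V \subseteq K} \prod_{v \in V} \big(q^{i_{V}(v)} - 1\big).
\]
The one thing to check is that this unrestricted sum really does equal the count over valid unidirectional configurations: any $V$ containing a vertex $v$ all of whose $K$-predecessors lie in $V$ forces $i_{V}(v) = 0$, hence contributes a vanishing factor, so the bad summands drop out automatically.

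To finish, I factor $q^{i_{V}(v)} - 1 = (q-1) \cdot \tfrac{q^{i_{V}(v)} - 1}{q-1}$ in each term, pull out a global $(q-1)^{|V|}$, and regroup by $j = k - |V|$. The inner sum is then precisely the defining expression~\eqref{eq:qstirling} of $\qstirling{k}{j}_{q}$, and the formula follows. The argument is essentially bookkeeping, and the only mildly subtle ingredient is the zero-contribution observation of the previous paragraph.
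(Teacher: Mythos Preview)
Your proof is correct and follows essentially the same approach as the paper: both enumerate $\overset{\to}{\mathscr{G}}_{K}(q)$ by first choosing $V$, then independently counting the $q^{i_{V}(v)}-1$ labeled in-neighborhoods at each $v\in V$, and finally matching the result to the defining sum~\eqref{eq:qstirling} for $\qstirling{k}{j}_{q}$. The only cosmetic difference is that the paper restricts at the outset to $V\subseteq[k]\setminus\{1\}$, whereas you sum over all $V\subseteq K$ and observe that the forbidden subsets contribute zero; these are equivalent since $1\notin V$ is exactly the condition that no $v\in V$ has $i_{V}(v)=0$.
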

\begin{proof}
Without loss of generality, take $K = \{1 < 2 < \cdots < k\}$.  It is sufficient to show that
\[
\big| \overset{\to}{\mathscr{G}}_{K}(q) \big| = \sum_{V \subseteq [k] \setminus \{1\}} \prod_{v \in V} (q^{\operatorname{i}_{V}(v)} - 1).
\]

Compute $\big| \overset{\to}{\mathscr{G}}_{K}(q) \big|$ by summing over choices of $V \subseteq K$, which must not contain $1$.  Fixing $V$ and taking $U = K \setminus V$, each choice of $E$ and $\tau$ is equivalent to a function $\tilde{\tau}: U \times V \to \FF_{q}$ with
\[
\tilde{\tau}(u, v) = \begin{cases} \tau(u, v) & \text{if $(u, v) \in E$,} \\ 0 & \text{otherwise.} \end{cases}
\]
For each $v \in V$, there are $q^{i_{V}(v)} - 1$ allowable restrictions of $\tilde{\tau}$ to $U \times \{v\}$.  Finally, $\tilde{\tau}$ is determined by these restricted functions, each of which may be chosen independently.
\end{proof}


\section{Splices}
\label{sec:splices}

It is not obvious, but for a given ideal $\mathfrak{n} \subseteq \ut_{n}(\FF_{q})$ there are relatively few $\displaystyle \edge{i}{j} \in \operatorname{supp}(\mathfrak{n})$ for which $\FF_{q}e_{i, j} \not\subseteq \mathfrak{n}$.  These edges occur according to certain straightforward rules, which are formalized in the definition of a \emph{tight splice of a nonnesting set partition}.  Studying a generalization of these rules first leads to a few interesting connections.

Let $\lambda$ and $\nu$ be disjoint set partitions of $[n]$.  The set $\SSS = \lambda \sqcup \nu$ is a \emph{splice} of $\lambda$ if:
\begin{enumerate}[label=(S\arabic*),  ref=(S\arabic*)]

\item \label{S1} for each $\displaystyle \dedge{i}{k} \in \nu$, $\lambda$ contains at least one edge $\displaystyle \edge{i}{j}$ or $\displaystyle \edge{j}{k}$ with $i < j < k$; and

\item \label{S2} for $1 \le j < k \le n$, $\displaystyle \dedge{i}{k} \in \nu$ and $\displaystyle \edge{i}{j} \in \lambda$  if and only if $\displaystyle \edge{k}{l} \in \lambda$ and $\displaystyle \dedge{j}{l} \in \nu$. 

\end{enumerate}
Every set partition $\lambda$ has at least one splice, the trivial splice $\lambda = \lambda \sqcup \emptyset$.  

Say that $\lambda$ is the \emph{underlying partition} of the splice $\SSS = \lambda \sqcup \nu$, and that $\nu$ is the set of \emph{vertical edges} of $\SSS$.  Both sets are determined by $\SSS$: $\nu = \SSS \setminus \lambda$, and 
\[
\lambda = \Big\{\edge{i}{j} \in \SSS \;\Big|\; \text{no element of $\SSS$ precedes $\edge{i}{j}$ in either $\preceq_{L}$ or $\preceq_{R}$}\Big\}.
\]
In Sections~\ref{sec:ideals} and~\ref{sec:normalsubgroups}, $\lambda$ will always be nonnesting.  In this case, the above equation simplifies to $\lambda = \min(\SSS)$,  the $\preceq$-minimal elements of $\SSS$.  

When drawing a splice $\SSS = \lambda \sqcup \nu$, the convention will be to draw edges from $\lambda$ horizontally and $\nu$ vertically, as in~\ref{S1} and~\ref{S2}.  For example, 
\begin{equation}\label{eq:spliceex1}
\SSS = 
\begin{tikzpicture}[baseline = -0.1cm]
\begin{scope}[xshift = 0cm, yshift = 1cm]
\draw[fill] (0, 0) circle (2pt) node[inner sep = 2pt] (1) {};
\draw[fill] (1, 0) circle (2pt) node[inner sep = 2pt] (2) {};
\end{scope}
\begin{scope}[xshift = 3cm, yshift = 0cm]
\draw[fill] (0, 0) circle (2pt) node[inner sep = 2pt] (3) {};
\draw[fill] (1, 0) circle (2pt) node[inner sep = 2pt] (5) {};
\draw[fill] (2, 0) circle (2pt) node[inner sep = 2pt] (7) {};
\end{scope}
\begin{scope}[xshift = 0cm, yshift = 0cm]
\draw[fill] (0, 0) circle (2pt) node[inner sep = 2pt] (4) {};
\draw[fill] (1, 0) circle (2pt) node[inner sep = 2pt] (6) {};
\draw[fill] (2, 0) circle (2pt) node[inner sep = 2pt] (8) {};
\end{scope}
\begin{scope}[xshift = 0cm, yshift = -1cm]
\draw[fill] (0, 0) circle (2pt) node[inner sep = 2pt] (9) {};
\draw[fill] (1, 0) circle (2pt) node[inner sep = 2pt] (10) {};
\draw[fill] (2, 0) circle (2pt) node[inner sep = 2pt] (11) {};
\end{scope}
\node[above] at (1) {$\scriptstyle 1$};
\node[above] at (2) {$\scriptstyle 2$};
\node[below] at (3) {$\scriptstyle 3$};
\node[above left] at (4) {$\scriptstyle 4$};
\node[below] at (5) {$\scriptstyle 5$};
\node[above right] at (6) {$\scriptstyle 6$};
\node[below] at (7) {$\scriptstyle 7$};
\node[above] at (8) {$\scriptstyle 8$};
\node[below] at (9) {$\scriptstyle 9$};
\node[below] at (10) {$\scriptstyle 10$};
\node[below] at (11) {$\scriptstyle 11$};
\draw[thick] (1) -- (2);
\draw[thick] (3) -- (5);
\draw[thick] (5) -- (7);
\draw[thick] (4) -- (6);
\draw[thick] (6) -- (8);
\draw[thick] (9) -- (10);
\draw[thick] (10) -- (11);
\draw[thick, dashed, color=specialgold] (1) -- (4);
\draw[thick, dashed, color=specialgold] (2) -- (6);
\draw[thick, dashed, color=specialgold] (4) -- (9);
\draw[thick, dashed, color=specialgold] (6) -- (10);
\draw[thick, dashed, color=specialgold] (8) -- (11);
\end{tikzpicture}
\quad\quad\text{ with $\displaystyle \edge{}{} \in \lambda$ and $\displaystyle \dedge{}{} \in \nu$}
\end{equation}
shows a splice $\SSS = \lambda \sqcup \nu$ of $\lambda = \{\edge{1}{2}, \edge{3}{5}, \edge{4}{6}, \edge{5}{7}, \edge{6}{8}, \edge{9}{10}, \edge{10}{11}\} \in \mathrm{NNSP}_{11}$, with  $\nu = \left\{ \dedge{1}{4}, \dedge{2}{6}, \dedge{4}{9}, \dedge{6}{10}, \dedge{8}{11} \right\}$.

\begin{rems}
\begin{enumerate}[label=(R\arabic*),  ref=(R\arabic*)]
\item In his groundbreaking work on the representation theory of $\UT_{n}(\FF_{q})$~\cite{Andre2, Andre1}, Andr\'{e} defines a set of ``superclasses'' which are unions of conjugacy classes.  Later work~\cite{AguiarEtAl} indexes these classes with labeled set partitions of $[n]$.  

In the language of splices,~\cite[Theorem 2.2]{Andre1} states that a superclass indexed by the set partition $\lambda$ is a single conjugacy class if and only if $\lambda$ has no nontrivial splices.  This suggests that splices may be useful in further study of Andr\'{e}'s superclasses.

\item \label{R2} If $\SSS$ is a splice of a nonnesting set partition, each connected component in the graph of a $\SSS$ can be drawn in a grid-like shape, as in~\eqref{eq:spliceex1}.  This shape coincides with that of a \emph{shortened polyomino}~\cite{SzuEnEtAl07, SzuEnEtAl13, DeutschShapiro}, defined as a pair of lattice paths in $\ZZ^{2}$ subject to certain conditions.  For example, the splice in~\eqref{eq:spliceex1} corresponds to the shortened polyominoes
\[
\begin{tikzpicture}[baseline = -1.1cm]
\draw[thin, gray] (-0.25, 0.25) grid (2.25, -2.25);
\draw[fill] (0, 0) circle (2pt); 
\draw[fill] (1, 0) circle (2pt); 
\draw[fill] (0, -1) circle (2pt); 
\draw[fill] (1, -1) circle (2pt); 
\draw[fill] (2, -1) circle (2pt); 
\draw[fill] (0, -2) circle (2pt); 
\draw[fill] (1, -2) circle (2pt); 
\draw[fill] (2, -2) circle (2pt); 
\draw[thick] (0, 0) -- (1, 0) -- (1, -1) -- (2, -1) -- (2, -2);
\draw[thick] (0, 0) -- (0, -1) -- (0, -2) -- (1, -2) -- (2, -2);
\end{tikzpicture}
\;=\; (ESES, SSEE)
\qquad\text{and}\qquad
\begin{tikzpicture}[baseline = -1.1cm]
\draw[thin, gray] (-0.25, 0.25) grid (2.25, -2.25);
\draw[fill] (0, 0) circle (2pt); 
\draw[fill] (1, 0) circle (2pt); 
\draw[fill] (2, 0) circle (2pt); 
\draw[thick] (0, 0) -- (2, 0);
\end{tikzpicture}
\;=\; (EE, EE).
\]
The number of shortened polyominoes with $k$ steps in each path is the $k$th Catalan number.  This connection has yet to be thoroughly investigated, but it does seem to be significant: each important statistic on splices matches a known statistic for shortened polyominoes, and thus other Catalan objects.
\end{enumerate}
\end{rems}

\subsection{Bindings, rows, and columns}
\label{sec:bindingsandcolumns}

Given a splice $\SSS = \lambda \sqcup \nu$, define the \emph{bindings} of $\SSS$ to be elements of
\[
\operatorname{bind}(\SSS) = \left\{ \binding{i}{j}{k}{l} \;\middle|\;  \edge{i}{j}, \edge{k}{l}\in \lambda\;\text{and} \dedge{i}{k}\;,\dedge{j}{l} \in \nu\;\text{with}\;j < k \right\},
\]
so that $\operatorname{bind}(\SSS)$ records the tuples of edges to which~\ref{S2} applies.  

If $\SSS = \lambda \sqcup \nu$ is a splice, then $\nu$ gives an equivalence relation $\sim_{\text{cols}}$ on $\lambda$, generated by 
\[
\edge{i}{j} \sim_{\text{cols}} \edge{k}{l} \quad \text{if} \quad \binding{i}{j}{k}{l} \in \operatorname{bind}(\SSS).
\]
Define the \emph{columns} of $\SSS$ to be the equivalence classes of $\lambda$ under $\sim_{\text{cols}}$, and write
\[
\operatorname{cols}(\SSS) = \{\CCC \subseteq \lambda \;|\; \text{$\CCC$ is a column of $\SSS$}\}.
\]
Each $\CCC \in \operatorname{cols}(\SSS)$ has the form $\CCC = \{\edge{i_{\ell}}{j_{\ell}} \;|\; 1 \le \ell \le |\CCC|\}$ for
\begin{center}
\begin{tikzpicture}[baseline = -1.5cm]
\begin{scope}[xshift = 0cm, yshift = 0cm]
\draw[fill] (0, 0) circle (2pt) node[inner sep=2pt] (i1) {};
\draw[fill] (0, -1) circle (2pt) node[inner sep=2pt] (i2) {};
\draw[fill] (0, -2.25) circle (2pt) node[inner sep=2pt] (icm1) {};
\draw[fill] (0, -3.25) circle (2pt) node[inner sep=2pt] (ic) {};
\node[specialgold] at (0, -1.525) (idots) {$\vdots$};
\path (i2) -- node[midway, inner sep = 9pt] (phantomidots) {} (icm1);
\end{scope}
\begin{scope}[xshift = 1cm, yshift = 0cm]
\draw[fill] (0, 0) circle (2pt) node[inner sep=2pt] (j1) {};
\draw[fill] (0, -1) circle (2pt) node[inner sep=2pt] (j2) {};
\draw[fill] (0, -2.25) circle (2pt) node[inner sep=2pt] (jcm1) {};
\draw[fill] (0, -3.25) circle (2pt) node[inner sep=2pt] (jc) {};
\node[color = specialgold] at (0, -1.525) (jdots) {$\vdots$};
\path (j2) -- node[midway, inner sep = 9pt] (phantomjdots) {} (jcm1);
\end{scope}
\node[left] at (i1) {$\scriptstyle i_{1}$};
\node[left] at (i2) {$\scriptstyle i_{2}$};
\node[left] at (icm1) {$\scriptstyle i_{|\CCC| - 1}$};
\node[left] at (ic) {$\scriptstyle i_{|\CCC|}$};
\node[right] at (j1) {$\scriptstyle j_{1}$};
\node[right] at (j2) {$\scriptstyle j_{2}$};
\node[right] at (jcm1) {$\scriptstyle j_{|\CCC| - 1}$};
\node[right] at (jc) {$\scriptstyle j_{|\CCC|}$};
\draw[thick] (i1) -- (j1);
\draw[thick] (i2) -- (j2);
\path (idots) -- node[midway] {$\vdots$} (jdots);
\draw[thick] (icm1) -- (jcm1);
\draw[thick] (ic) -- (jc);
\draw[thick, dashed, color=specialgold] (i1) -- (i2);
\draw[thick, dashed, color=specialgold] (j1) -- (j2);
\draw[thick, dashed, color=specialgold] (i2) -- (phantomidots);
\draw[thick, dashed, color=specialgold] (j2) -- (phantomjdots);
\draw[thick, dashed, color=specialgold] (phantomidots) -- (icm1);
\draw[thick, dashed, color=specialgold] (phantomjdots) -- (jcm1);
\draw[thick, dashed, color=specialgold] (icm1) -- (ic);
\draw[thick, dashed, color=specialgold] (jcm1) -- (jc);
\end{tikzpicture}
$\subseteq \SSS$
\qquad\text{with $\displaystyle \edge{}{} \in \lambda$ and $\displaystyle \dedge{}{} \in \nu$,}
\end{center}
including the noteworthy special case of $\{\edge{i}{j}\} \in \operatorname{cols}(\SSS)$ when $\edge{i}{j}$ is not contained in any bindings of $\SSS$.  As a result,
\begin{equation}\label{eq:colsformula}
|\lambda| = |\operatorname{cols}(\SSS)| + |\operatorname{bind}(\SSS)|.
\end{equation}

Similarly, $\lambda$ gives an equivalence relation $\sim_{\text{rows}}$ on $\nu$, generated by
\[
\dedge{i}{k}\; \sim_{\text{rows}} \dedge{j}{l} \quad \text{if} \quad \binding{i}{j}{k}{l} \in \operatorname{bind}(\SSS).
\]
Let the \emph{rows} of $\SSS$ be the equivalence classes of $\nu$ under $\sim_{\text{rows}}$, and write
\[
\operatorname{rows}(\SSS) = \{\RRR \subseteq \nu \;|\; \text{$\RRR$ is a row of $\SSS$}\}.
\]
Each $\RRR \in \operatorname{rows}(\SSS)$ is of the form $\RRR = \left\{ \dedge[color = specialgold, dashed]{i_{\ell}}{k_{\ell}} \;\middle|\; 1 \le \ell \le |\RRR| \right\}$ for
\begin{center}
\begin{tikzpicture}[baseline = -0.6cm]
\begin{scope}[xshift = 0cm, yshift = -1cm]
\draw[fill] (0, 0) circle (2pt) node[inner sep = 2pt] (k0) {};
\draw[fill] (1, 0) circle (2pt) node[inner sep = 2pt] (k1) {};
\node[inner sep = 5pt] (kdots) at (1.75, 0) {$\cdots$};
\draw[fill] (2.5, 0) circle (2pt) node[inner sep = 2pt] (kv) {};
\draw[fill] (3.5, 0) circle (2pt) node[inner sep = 2pt] (kr) {};
\end{scope}
\begin{scope}[xshift = 0cm, yshift = 0cm]
\draw[fill] (0, 0) circle (2pt) node[inner sep = 2pt] (i0) {};
\draw[fill] (1, 0) circle (2pt) node[inner sep = 2pt] (i1) {};
\node[inner sep = 5pt] (idots) at (1.75, 0) {$\cdots$};
\draw[fill] (2.5, 0) circle (2pt) node[inner sep = 2pt] (iv) {};
\draw[fill] (3.5, 0) circle (2pt) node[inner sep = 2pt] (ir) {};
\end{scope}
\node[above] at (i0) {$\scriptstyle i_{1}$};
\node[above] at (i1) {$\scriptstyle i_{2}$};
\node[above] at (iv) {$\scriptstyle i_{|\RRR| - 1}$};
\node[above] at (ir) {$\scriptstyle i_{|\RRR|}$};
\node[below] at (k0) {$\scriptstyle k_{1}$};
\node[below] at (k1) {$\scriptstyle k_{2}$};
\node[below] at (kv) {$\scriptstyle k_{|\RRR| - 1}$};
\node[below] at (kr) {$\scriptstyle k_{|\RRR| }$};
\draw[thick, dashed, color=specialgold] (i0) -- (k0);
\draw[thick, dashed, color=specialgold] (i1) -- (k1);
\path[color=specialgold] (idots) -- node[midway] {$\cdots$} (kdots);
\draw[thick, dashed, color=specialgold] (iv) -- (kv);
\draw[thick, dashed, color=specialgold] (ir) -- (kr);
\draw[thick] (i0) -- (i1);
\draw[thick] (i1) -- (idots);
\draw[thick] (idots) -- (iv);
\draw[thick] (iv) -- (ir);
\draw[thick] (k0) -- (k1);
\draw[thick] (k1) -- (kdots);
\draw[thick] (kdots) -- (kv);
\draw[thick] (kv) -- (kr);
\end{tikzpicture}
$\subseteq \SSS$
\qquad with $\displaystyle \edge{}{} \in \lambda$ and $\displaystyle \dedge[dashed, color = specialgold]{}{} \in \nu$.
\end{center}
This gives a row counterpart to equation~\eqref{eq:colsformula},
\begin{equation}
\label{eq:rowsformula}
|\nu| = |\operatorname{rows}(\SSS)| + |\operatorname{bind}(\SSS)|.
\end{equation}
Unlike columns,~\ref{S1} and~\ref{S2} ensure that every row has at least two elements.

For the splice $\SSS = \lambda \sqcup \nu$ with $\lambda = \{\edge{1}{2}, \edge{3}{5}, \edge{4}{6}, \edge{5}{7}, \edge{6}{8}, \edge{9}{10}, \edge{10}{11}\}$ and $\nu = \left\{ \dedge{1}{4}, \dedge{2}{6}, \dedge{4}{9}, \dedge{6}{10}, \dedge{8}{11} \right\}$ as in~\eqref{eq:spliceex1}, 
\[
\operatorname{cols}(\SSS) = \Big\{\Big\{\edge{1}{2}, \edge{4}{6}, \edge{9}{10} \Big\},  \Big\{ \edge{6}{8}, \edge{10}{11} \Big\}, \Big\{ \edge{3}{5} \Big\}, \Big\{\edge{5}{7} \Big\}\Big\}.
\]
and 
\[
\operatorname{rows}(\SSS) = \left\{ \left\{ \dedge{1}{4}\;, \dedge{2}{6} \right\}, \left\{ \dedge{4}{9}\;, \dedge{6}{10}\;, \dedge{8}{11} \right\}\right\}.
\]

\begin{lem}\label{lem:spliceproperties}
Let $\SSS = \lambda \sqcup \nu$ be a splice and $\RRR$ a row of $\SSS$.  Let $I$ and $K$ be the connected components in the graph of $\lambda$ which contain $\left\{ i \;\middle|\; \dedge{i}{k} \in \RRR \right\}$ and $\left\{ k \;\middle|\; \dedge{i}{k} \in \RRR \right\}$, respectively.  Then $I \neq K$, and if $\lambda$ is nonnesting, every $\dedge{i}{k} \in \nu$ with $i \in I$ or $k \in K$ belongs to $\RRR$.
\end{lem}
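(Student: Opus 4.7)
Both parts of the lemma rest on the ``if and only if'' in axiom (S2): this allows me to propagate a vertical edge in $\nu$ along a horizontal edge of $\lambda$, and combined with the antichain property of $\nu$ (which uniquely determines the other endpoint of any $\nu$-edge fixed on one side) it gives tight control on which $\nu$-edges exist. My plan is to iterate this propagation starting from $\dedge{i_1}{k_1}$ (or, by symmetry, $\dedge{i_m}{k_m}$), where $\RRR = \{\dedge{i_\ell}{k_\ell} : 1 \le \ell \le m\}$ with $m = |\RRR|$.

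For Part 1, I will argue by contradiction. Suppose $I = K = B = \{b_1 < \cdots < b_s\}$ with $i_1 = b_p$ and $k_1 = b_q$, so $p < q$. By induction on $j \ge 0$, (S2) applied to $\dedge{b_{p+j}}{b_{q+j}} \in \nu$ and $\edge{b_{p+j}}{b_{p+j+1}} \in \lambda$ yields $\dedge{b_{p+j+1}}{b_{q+j+1}} \in \nu$, preserving the gap $q-p$ at each step. If $q = p + 1$, then $\edge{b_p}{b_q}$ would lie in both $\lambda$ and $\nu$, impossible. Otherwise iteration reaches $j = s - q$, and one further application of (S2) demands an element $b_{s+1}$ that does not exist. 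Either way I contradict the supposition, so $I \neq K$.

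For Part 2, I will treat the case $i \in I$; the case $k \in K$ is entirely symmetric, using (S2) in its reverse direction and propagating backward from $\dedge{i_m}{k_m}$. Write $I = \{b_1 < \cdots < b_s\}$ with $i_\ell = b_{p+\ell-1}$ for $\ell = 1, \ldots, m$, and set $i = b_r$. If $p \le r \le p+m-1$ then $i = i_{r-p+1}$, and the antichain property of $\nu$ forces $k = k_{r-p+1}$, so $\dedge{i}{k} \in \RRR$. For $r < p$, I iterate (S2) forward: at each step, applying (S2) to $\dedge{b_{r+j}}{l_{j-1}}$ and $\edge{b_{r+j}}{b_{r+j+1}}$ produces $\dedge{b_{r+j+1}}{l_j} \in \nu$ in the same $\sim_{\mathrm{rows}}$-class, provided $b_{r+j+1} < l_{j-1}$. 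This inequality is where nonnesting enters: if it failed, either $b_{r+j+1} = l_{j-1}$ would put the same pair into both $\lambda$ and $\nu$, or axiom (S1) would produce an edge $\edge{j'}{l_{j-1}} \in \lambda$ nested strictly inside $\edge{b_{r+j}}{b_{r+j+1}} \in \lambda$. After $p - r$ iterations I reach $b_p = i_1$, and the antichain then pins down the terminal right endpoint as $k_1$, placing $\dedge{b_r}{k}$ in $\RRR$.

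The remaining case $r > p + m - 1$ must be shown vacuous, and this is the main obstacle. Assuming for contradiction that $b_{p+m}$ exists in $I$, applying (S2) to $\dedge{i_m}{k_m}$ with $\edge{i_m}{b_{p+m}} \in \lambda$ would either adjoin $b_{p+m}$ as an additional $i$-endpoint of $\RRR$ (contradicting the count $m$) or force $b_{p+m} > k_m$. In that second situation, nonnesting breaks: when $m \ge 2$, the edge $\edge{k_{m-1}}{k_m} \in \lambda$ (guaranteed by the binding linking $\dedge{i_{m-1}}{k_{m-1}}$ and $\dedge{i_m}{k_m}$) nests inside $\edge{i_m}{b_{p+m}}$ since $i_m < k_{m-1} < k_m < b_{p+m}$; when $m = 1$, axiom (S1) applied to $\dedge{i_1}{k_1}$ forces an edge $\edge{j'}{k_1} \in \lambda$ with $i_1 < j' < k_1 < b_{p+1}$, which nests inside $\edge{i_1}{b_{p+1}}$. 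Hence $i_m = b_s$ and no such $b_r$ exists. Setting up these two nesting obstructions cleanly is the delicate step; the rest is disciplined iteration and bookkeeping.
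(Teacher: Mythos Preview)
Your proof is correct and uses essentially the same mechanism as the paper: iterating (S2) along the block $I$, using the set-partition property of $\nu$ to pin down endpoints, and invoking nonnesting (via (S1)) to ensure the iteration can proceed. The organization differs slightly---the paper argues $I\neq K$ directly from $s=\max\{i:(i,k)\in\RRR\}$ rather than by contradiction, and establishes $s=\max(I)$ up front (which is exactly your ``$r>p+m-1$ is vacuous'' step) before iterating---but the underlying ideas coincide.
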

\begin{proof}
Let $s = \max(\left\{ i \;\middle|\; \dedge{i}{k} \in \RRR \right\})$.  By~\ref{S1} and~\ref{S2} $\SSS$ has a binding 
\[
\binding{r}{s}{t}{u} \in \operatorname{bind}(\SSS)
\qquad \text{with $\edge{r}{s}, \edge{t}{u} \in \lambda$ and $\dedge[dashed, color = specialgold]{r}{t}\;, \dedge[dashed, color = specialgold]{s}{u} \in \RRR \subseteq \nu$},
\]
and either $s = \max(I)$, or $\edge{s}{v} \in \lambda$ with $v > u$.  In either case, $u \notin I$, so $I \neq K$.

If $\lambda$ nonnesting, only $s = \max(I)$ can be true because $\edge{t}{u} \prec \edge{s}{v}$.  Thus if $\edge{i}{k} \in \nu$ with $i \in I$, repeated application of~\ref{S2} shows that $\dedge{i}{k} \sim_{\text{rows}} \dedge{s}{w}$ for some $\dedge{s}{w} \in \nu$.  As $\nu$ is a set partition, $w = u$ and $\dedge{i}{k} \in \RRR$.  A similar line of reasoning applied to $\min(\left\{ k \;\middle|\; \dedge{i}{k} \in \RRR \right\})$ shows that $\dedge{i}{k} \in \nu$ with $k \in K$ also shows that $\dedge{i}{k} \in \RRR$.
\end{proof}

With the description of rows given above, Lemma~\ref{lem:spliceproperties} can be used to show that the graph of a splice of a nonnesting set partition is planar and has the properties described in~\ref{R2}.  However, these facts will not be used in the scope of this paper.

\subsection{Tight splices}
\label{sec:shortsplices}

Take $\lambda \in \mathrm{NNSP}_{n}$, so that $\lambda$ is a nonnesting set partition.  Say that a splice $\SSS$ of $\lambda$ is \emph{tight} if $\SSS \cap \upnum{\lambda}{2} = \emptyset$.  By ~\ref{S2}, this is equivalent to the more straightforward condition
\[
\tag{T}\label{T} \binding{i}{j}{k}{l} \in \operatorname{bind}(\SSS)\;\text{ only if $k = j+1$.}
\]

\begin{prop}
Let $\SSS = \lambda \sqcup \nu$ be a tight splice.  If $\lambda$ is nonnesting, then so is $\nu$.
\end{prop}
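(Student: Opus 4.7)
The plan is to argue by contradiction: suppose there exist $\dedge{a}{c}, \dedge{b}{d} \in \nu$ with $a < b < d < c$, forming a nesting in $\nu$. I will use conditions (S1), (S2), and tightness (T) to force the existence of two edges of $\lambda$ that are either nested in $\lambda$ or violate the set-partition axioms for $\lambda$, contradicting the hypothesis that $\lambda$ is nonnesting.

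First I apply (S1) to $\dedge{a}{c}$, producing either $\edge{a}{j_{a}} \in \lambda$ (Case A) or $\edge{j_{a}}{c} \in \lambda$ (Case B) for some $a < j_{a} < c$.  Completing the forced binding via (S2) and then invoking tightness (T), one pins down $j_{a} = c-1$ in Case A and $j_{a} = a+1$ in Case B.  The parallel analysis for $\dedge{b}{d}$ gives either $\edge{b}{d-1} \in \lambda$ (Case A$'$) or $\edge{b+1}{d} \in \lambda$ (Case B$'$). Since (S1) guarantees at least one case holds for each of $\dedge{a}{c}$ and $\dedge{b}{d}$, at least one of the four combinations of cases must occur.

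Next I examine the four combinations.  In (A, A$'$), the edges $\edge{a}{c-1}$ and $\edge{b}{d-1}$ lie in $\lambda$ and nest, because $a < b$ and $d-1 < c-1$.  In (B, B$'$), the edges $\edge{a+1}{c}$ and $\edge{b+1}{d}$ nest for the same reason.  For (A, B$'$), the edges $\edge{a}{c-1}$ and $\edge{b+1}{d}$ nest whenever $d < c-1$; if $d = c-1$, the two edges share the right endpoint $c-1$, violating the antichain condition for $\lambda$ in $\preceq_{L}$.  Case (B, A$'$) is handled symmetrically, split into $b = a+1$ (two edges of $\lambda$ sharing the left endpoint $a+1$) and $b > a+1$ (nested).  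Each case produces a contradiction, so no such nested pair exists in $\nu$.

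The argument has no real obstacle beyond the case analysis; the main insight is that tightness forces the edges of $\lambda$ produced by (S1) to lie at precisely the endpoints $c-1$ or $a+1$ (respectively $d-1$ or $b+1$), and these endpoints align exactly with the strict inequalities $a < b$ and $d < c$ to yield the desired nesting in $\lambda$.
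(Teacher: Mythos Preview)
Your proof is correct, but it takes a substantially more hands-on route than the paper's. The paper observes in one line that (S1) forces $\nu \subseteq \upnum{\lambda}{1}$ while tightness forces $\nu \cap \upnum{\lambda}{2} = \emptyset$, so $\nu$ lies entirely in the ``level set'' $\upnum{\lambda}{1} \setminus \upnum{\lambda}{2}$, which is automatically a $\preceq$-antichain (any strict $\preceq$-relation between two elements of $\upnum{\lambda}{1}$ increases height by at least one, pushing the larger element into $\upnum{\lambda}{2}$). In particular the paper never invokes (S2). Your argument instead uses (S1), (S2), and (T) together to extract, for each vertical edge, a specific horizontal edge of $\lambda$ at a pinned-down endpoint, and then checks by a four-way case analysis that a nested pair in $\nu$ produces either a nested pair or a shared-endpoint pair in $\lambda$. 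Your approach is completely explicit and self-contained---it does not rely on recognizing that $\upnum{\lambda}{1} \setminus \upnum{\lambda}{2}$ is an antichain---but the paper's argument is much shorter and exposes the structural reason: tightness confines $\nu$ to a single height layer above $\lambda$, and any single such layer is nonnesting by construction.
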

\begin{proof}
As $\lambda$ and $\nu$ are disjoint, tightness implies that $\nu \subseteq \upnum{\lambda}{1} \setminus \upnum{\lambda}{2}$.  Now observe that $\upnum{\lambda}{1} \setminus \upnum{\lambda}{2}$ is an antichain under $\prec$.
\end{proof}

The splice of the nonnesting set partition $\lambda = \{\edge{1}{2}, \edge{3}{5}, \edge{4}{6}, \edge{5}{7}, \edge{6}{8}, \edge{9}{10}, \edge{10}{11}\}$ in~\eqref{eq:spliceex1} is not tight, but
\begin{equation}\label{eq:spliceex2}
S = 
\begin{tikzpicture}[baseline = -0.6cm]
\begin{scope}[xshift = 0cm, yshift = 0cm]
\draw[fill] (0, 0) circle (2pt) node[inner sep = 2pt] (1) {};
\draw[fill] (1, 0) circle (2pt) node[inner sep = 2pt] (2) {};
\end{scope}
\begin{scope}[xshift = 0cm, yshift = -1cm]
\draw[fill] (0, 0) circle (2pt) node[inner sep = 2pt] (3) {};
\draw[fill] (1, 0) circle (2pt) node[inner sep = 2pt] (5) {};
\draw[fill] (2, 0) circle (2pt) node[inner sep = 2pt] (7) {};
\end{scope}
\begin{scope}[xshift = 3cm, yshift = 0cm]
\draw[fill] (0, 0) circle (2pt) node[inner sep = 2pt] (4) {};
\draw[fill] (1, 0) circle (2pt) node[inner sep = 2pt] (6) {};
\draw[fill] (2, 0) circle (2pt) node[inner sep = 2pt] (8) {};
\end{scope}
\begin{scope}[xshift = 4cm, yshift = -1cm]
\draw[fill] (0, 0) circle (2pt) node[inner sep = 2pt] (9) {};
\draw[fill] (1, 0) circle (2pt) node[inner sep = 2pt] (10) {};
\draw[fill] (2, 0) circle (2pt) node[inner sep = 2pt] (11) {};
\end{scope}
\node[above] at (1) {$\scriptstyle 1$};
\node[above] at (2) {$\scriptstyle 2$};
\node[below] at (3) {$\scriptstyle 3$};
\node[above] at (4) {$\scriptstyle 4$};
\node[below] at (5) {$\scriptstyle 5$};
\node[above] at (6) {$\scriptstyle 6$};
\node[below] at (7) {$\scriptstyle 7$};
\node[above] at (8) {$\scriptstyle 8$};
\node[below] at (9) {$\scriptstyle 9$};
\node[below] at (10) {$\scriptstyle 10$};
\node[below] at (11) {$\scriptstyle 11$};
\draw[thick] (1) -- (2);
\draw[thick] (3) -- (5);
\draw[thick] (5) -- (7);
\draw[thick] (4) -- (6);
\draw[thick] (6) -- (8);
\draw[thick] (9) -- (10);
\draw[thick] (10) -- (11);
\draw[thick, dashed, color=specialgold] (1) -- (3);
\draw[thick, dashed, color=specialgold] (2) -- (5);
\draw[thick, dashed, color=specialgold] (6) -- (9);
\draw[thick, dashed, color=specialgold] (8) -- (10);
\end{tikzpicture}
\end{equation}
is a tight splice of $\lambda$, with vertical edges $\{\dedge{1}{3}, \dedge{2}{5}, \dedge{6}{9}, \dedge{8}{10}\}$.

For $\lambda \in \mathrm{NNSP}_{n}$, let
\[
\mathscr{T}_{\lambda} = \{\text{tight splices $\SSS$ of $\lambda$}\}
\]
and
\[
\mathscr{T}_{n} = \bigsqcup_{\lambda \in \mathrm{NNSP}_{n}} \mathscr{T}_{\lambda}.
\]

\begin{prop}\label{prop:completesplice}
Let $\lambda \in \mathrm{NNSP}_{n}$.  The set $\mathscr{T}_{\lambda}$ has a maximum element $\KKK$, and 
\[
\mathscr{T}_{\lambda} = \left\{ \lambda \sqcup \nu \;\middle|\; \text{$\nu$ is a union of elements of $\operatorname{rows}(\KKK)$} \right\}.
\]
\end{prop}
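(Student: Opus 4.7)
The plan is to build the maximum $\KKK$ abstractly as $\bigcup \mathscr{T}_\lambda$, showing this union is itself tight, and then to extract the structural description by exploiting the biconditional in (S2).

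First I would note that $\mathscr{T}_\lambda \neq \emptyset$: the trivial splice $\lambda \sqcup \emptyset$ satisfies (S1) and (S2) vacuously and is plainly tight. The core of the existence claim then rests on a closure lemma: if $\{\SSS_\alpha = \lambda \sqcup \nu_\alpha\}_\alpha$ is any family in $\mathscr{T}_\lambda$, then $\SSS := \lambda \sqcup \bigcup_\alpha \nu_\alpha$ is again in $\mathscr{T}_\lambda$. The key ingredients are: each $\nu_\alpha$ lies in $\upnum{\lambda}{1}\setminus\upnum{\lambda}{2}$ (disjointness from $\lambda$ combined with (S1) puts it in $\upnum{\lambda}{1}$, while tightness excludes $\upnum{\lambda}{2}$); this set was already observed to be an antichain in $\preceq$, so the union is a set partition disjoint from $\lambda$; and (S2) is an \emph{existential} biconditional, so any forced edge $\dedge{j}{l}$ can be traced back to the single $\nu_\alpha$ that produced its witness. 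Setting $\KKK := \bigcup_{\SSS \in \mathscr{T}_\lambda} \SSS$ then yields a tight splice that contains every element of $\mathscr{T}_\lambda$, giving a maximum.

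For the structural claim, the forward direction is routine: for a union of rows $\nu = \bigcup \RRR$ with $\RRR \in \operatorname{rows}(\KKK)$, the inclusion $\nu \subseteq \nu_\KKK$ transports tightness and (S1) for free, and (S2) is satisfied because the edge $\dedge{j}{l}$ that (S2) demands from $\dedge{i}{k}\in\nu$ is precisely the next element of the row containing $\dedge{i}{k}$ in $\KKK$: the binding $\binding{i}{j}{k}{l} \in \operatorname{bind}(\KKK)$ witnesses $\dedge{i}{k} \sim_{\text{rows}} \dedge{j}{l}$.

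For the reverse direction, given $\SSS = \lambda \sqcup \nu \in \mathscr{T}_\lambda$, maximality gives $\nu \subseteq \nu_\KKK$, and I must show that each row $\RRR = \{\dedge{i_1}{k_1},\dots,\dedge{i_p}{k_p}\}$ of $\KKK$ meeting $\nu$ is contained in $\nu$. This is induction along $\RRR$: if $\dedge{i_\ell}{k_\ell} \in \nu$ and $\binding{i_\ell}{i_{\ell+1}}{k_\ell}{k_{\ell+1}} \in \operatorname{bind}(\KKK)$, then applying (S2) in $\SSS$ to $(j,k)=(i_{\ell+1}, k_\ell)$---legal because $i_{\ell+1} < k_\ell$ by tightness of $\KKK$---forces some $l$ with $\edge{k_\ell}{l} \in \lambda$ and $\dedge{i_{\ell+1}}{l} \in \nu$. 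Uniqueness of the right neighbor of $k_\ell$ in the set partition $\lambda$ pins $l = k_{\ell+1}$, so $\dedge{i_{\ell+1}}{k_{\ell+1}} \in \nu$; a symmetric application extends $\RRR$ into $\nu$ leftward.

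The main obstacle, as I see it, is wielding (S2) carefully: it is a biconditional about existence, and its propagation along rows is deterministic only because left and right neighbors in the set partition $\lambda$ are unique. Once this is cleanly set up, the existence of a maximum reduces to the one-line union-closure argument, and the structural equivalence reduces to this induction along a row.
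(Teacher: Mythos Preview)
Your proposal is correct and follows essentially the same approach as the paper: both establish the maximum by proving that $\mathscr{T}_\lambda$ is closed under unions (using that $\nu \subseteq \upnum{\lambda}{1}\setminus\upnum{\lambda}{2}$ and that (S1), (S2) are existential), and both derive the row characterization from the interaction of (S2) with the uniqueness of neighbors in the set partition $\lambda$. The paper packages the forward direction into the single observation that $\lambda \sqcup \RRR \in \mathscr{T}_\lambda$ for each row $\RRR$ of any tight splice, leaving the backward direction implicit, whereas you spell out the (S2)-propagation along a row explicitly; this is a difference in exposition, not in substance.
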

\begin{proof}
For $\SSS \in \mathscr{T}_{\lambda}$ and $\RRR \in \operatorname{rows}(\SSS)$, the set $\lambda \sqcup \RRR \in \mathscr{T}_{\lambda}$ is a tight splice of $\lambda$.  It is therefore sufficient to show that the union of any subset of $\mathscr{T}_{\lambda}$ is a tight splice of $\lambda$.

Let $\SSS_{1} = \lambda \sqcup \nu_{1}, \SSS_{2} = \lambda \sqcup \nu_{2}, \ldots, \SSS_{\ell} = \lambda \sqcup \nu_{\ell}$ be tight splices of $\lambda$, and let
\[
\TTT = \SSS_{1} \cup \SSS_{2} \cup \cdots \cup \SSS_{\ell} = \lambda \sqcup \nu \qquad\text{with $\nu =  \bigcup_{i = 1}^{\ell} \nu_{i}$}.
\]
Then $\nu \subseteq \upnum{\lambda}{1} \setminus \hspace{-0.25cm} \upnum{\lambda}{2}$, so $\nu$ is a nonnesting set partition.  Lastly, each $\SSS_{i}$ satisfies~\ref{S1} and~\ref{S2}, which are existentially quantified, so $\TTT$ does as well.
\end{proof}

\subsection{Labeling tight splices}
\label{sec:labelingsplices}

For $\SSS \in \mathscr{T}_{n}$, an $\FF_{q}^{\times}$-labeling of $\SSS$ will be a function $\sigma: \operatorname{bind}(\SSS) \to \FF_{q}^{\times}$.  Let
\[
\mathscr{T}_{\lambda}(q) = \{(\SSS, \sigma) \;|\; \SSS \in \mathscr{T}_{\lambda},\; \sigma: \operatorname{bind}(\SSS) \to \FF_{q}^{\times}\}
\]
and
\[
\mathscr{T}_{n}(q) = \bigsqcup_{\lambda \in \mathrm{NNSP}_{n}} \mathscr{T}_{\lambda}(q).
\]
Such a labeling can be realized graphically by drawing label values in the center of each binding.  For example, with $p > 3$,
\begin{center}
\begin{tikzpicture}[baseline = -0.6cm]
\begin{scope}[xshift = 0cm, yshift = 0cm]
\draw[fill] (0, 0) circle (2pt) node[inner sep = 2pt] (1) {};
\draw[fill] (1, 0) circle (2pt) node[inner sep = 2pt] (2) {};
\end{scope}
\begin{scope}[xshift = 0cm, yshift = -1cm]
\draw[fill] (0, 0) circle (2pt) node[inner sep = 2pt] (3) {};
\draw[fill] (1, 0) circle (2pt) node[inner sep = 2pt] (5) {};
\draw[fill] (2, 0) circle (2pt) node[inner sep = 2pt] (7) {};
\end{scope}
\begin{scope}[xshift = 3cm, yshift = 0cm]
\draw[fill] (0, 0) circle (2pt) node[inner sep = 2pt] (4) {};
\draw[fill] (1, 0) circle (2pt) node[inner sep = 2pt] (6) {};
\draw[fill] (2, 0) circle (2pt) node[inner sep = 2pt] (8) {};
\end{scope}
\begin{scope}[xshift = 4cm, yshift = -1cm]
\draw[fill] (0, 0) circle (2pt) node[inner sep = 2pt] (9) {};
\draw[fill] (1, 0) circle (2pt) node[inner sep = 2pt] (10) {};
\draw[fill] (2, 0) circle (2pt) node[inner sep = 2pt] (11) {};
\end{scope}
\node[above] at (1) {$\scriptstyle 1$};
\node[above] at (2) {$\scriptstyle 2$};
\node[below] at (3) {$\scriptstyle 3$};
\node[above] at (4) {$\scriptstyle 4$};
\node[below] at (5) {$\scriptstyle 5$};
\node[above] at (6) {$\scriptstyle 6$};
\node[below] at (7) {$\scriptstyle 7$};
\node[above] at (8) {$\scriptstyle 8$};
\node[below] at (9) {$\scriptstyle 9$};
\node[below] at (10) {$\scriptstyle 10$};
\node[below] at (11) {$\scriptstyle 11$};
\draw[thick] (1) -- (2);
\draw[thick] (3) -- (5);
\draw[thick] (5) -- (7);
\draw[thick] (4) -- (6);
\draw[thick] (6) -- (8);
\draw[thick] (9) -- (10);
\draw[thick] (10) -- (11);
\draw[thick, dashed, color=specialgold] (1) -- (3);
\draw[thick, dashed, color=specialgold] (2) -- (5);
\draw[thick, dashed, color=specialgold] (6) -- (9);
\draw[thick, dashed, color=specialgold] (8) -- (10);
\path (1) -- node[midway, color = red] {2} (5);
\path (6) -- node[midway, color = red] {3} (10);
\end{tikzpicture}
\end{center}
shows an $\FF_{p}^{\times}$-labeling $\sigma$ of the tight splice $\SSS$ from~\eqref{eq:spliceex2}, explicitly defined by
\[
\sigma\left( \binding{1}{2}{3}{5} \right) = 2\quad\text{and}\quad \sigma\left( \binding{6}{8}{9}{10} \right) = 3.
\]

\subsection{Ordering rows and columns}
\label{sec:spliceordering}

Let $\SSS$ be a splice and define
\[
\operatorname{CR}(\SSS) =  \operatorname{cols}(\SSS) \sqcup \operatorname{rows}(\SSS).
\]

\begin{prop}
\label{prop:CRsize}
Let $\SSS = \lambda \sqcup \mu$ be a splice.  Then $|\operatorname{CR}(\SSS)| = |\lambda| + |\mu| - 2|\operatorname{bind}(\SSS)|$.
\end{prop}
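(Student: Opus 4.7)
The plan is to combine the two counting identities already established in the preceding subsection. By the very definition of $\operatorname{CR}(\SSS)$ as a disjoint union,
\[
|\operatorname{CR}(\SSS)| = |\operatorname{cols}(\SSS)| + |\operatorname{rows}(\SSS)|.
\]
Equation~\eqref{eq:colsformula} rearranges to $|\operatorname{cols}(\SSS)| = |\lambda| - |\operatorname{bind}(\SSS)|$, and equation~\eqref{eq:rowsformula}, with $\mu$ playing the role of the vertical edge set $\nu$, rearranges to $|\operatorname{rows}(\SSS)| = |\mu| - |\operatorname{bind}(\SSS)|$. Adding these two expressions yields the claimed formula.

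The substance of the proof therefore lives in those two prior equations, each of which reflects the same elementary count: within any column $\CCC \in \operatorname{cols}(\SSS)$ the consecutive edges of $\CCC$ are matched up by exactly $|\CCC| - 1$ bindings, so summing over columns gives $|\operatorname{bind}(\SSS)| = |\lambda| - |\operatorname{cols}(\SSS)|$, and an identical argument using rows gives $|\operatorname{bind}(\SSS)| = |\mu| - |\operatorname{rows}(\SSS)|$. The key point is that a single binding $\binding{i}{j}{k}{l}$ contributes once to the column side (merging $\edge{i}{j}$ and $\edge{k}{l}$) and once to the row side (merging $\dedge{i}{k}$ and $\dedge{j}{l}$), so when the two formulas are added, $|\operatorname{bind}(\SSS)|$ is deducted twice; this accounts for the factor of $2$ in the statement. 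There is no real obstacle to navigate—the only thing to check is the notational identification of the proposition's $\mu$ with the vertical edge set of the previous subsection, which is immediate from the hypothesis $\SSS = \lambda \sqcup \mu$.
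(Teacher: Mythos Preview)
Your argument is correct and matches the paper's proof exactly: the paper simply states that the result follows directly from equations~\eqref{eq:colsformula} and~\eqref{eq:rowsformula}, which is precisely the combination you carry out. Your additional paragraph re-justifying those two equations is extra commentary rather than a different approach.
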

\begin{proof}
This follows directly from equations~\eqref{eq:colsformula} and~\eqref{eq:rowsformula}.	
\end{proof}

The main result of Section~\ref{sec:ideals} requires a uniform total order on the sets $\operatorname{CR}(\SSS)$, so as to define $\FF_{q}^{\times}$-labeled loopless binary matroids on $\operatorname{CR}(\SSS)$.  One such order is described below.

To start, draw $\SSS$ in the usual way, and then lower each successive connected component so that no row of $\SSS$ is directly to the right of another.  For example, taking $\SSS$ as in~\eqref{eq:spliceex2}, we move the second connected component down by one unit:
\begin{center}
\begin{tikzpicture}[baseline = -0.6cm]
\begin{scope}[xshift = 0cm, yshift = 0cm]
\draw[fill] (0, 0) circle (2pt) node[inner sep = 2pt] (1) {};
\draw[fill] (1, 0) circle (2pt) node[inner sep = 2pt] (2) {};
\end{scope}
\begin{scope}[xshift = 0cm, yshift = -1cm]
\draw[fill] (0, 0) circle (2pt) node[inner sep = 2pt] (3) {};
\draw[fill] (1, 0) circle (2pt) node[inner sep = 2pt] (5) {};
\draw[fill] (2, 0) circle (2pt) node[inner sep = 2pt] (7) {};
\end{scope}
\begin{scope}[xshift = 3cm, yshift = 0cm]
\draw[fill] (0, 0) circle (2pt) node[inner sep = 2pt] (4) {};
\draw[fill] (1, 0) circle (2pt) node[inner sep = 2pt] (6) {};
\draw[fill] (2, 0) circle (2pt) node[inner sep = 2pt] (8) {};
\end{scope}
\begin{scope}[xshift = 4cm, yshift = -1cm]
\draw[fill] (0, 0) circle (2pt) node[inner sep = 2pt] (9) {};
\draw[fill] (1, 0) circle (2pt) node[inner sep = 2pt] (10) {};
\draw[fill] (2, 0) circle (2pt) node[inner sep = 2pt] (11) {};
\end{scope}
\node[above] at (1) {$\scriptstyle 1$};
\node[above] at (2) {$\scriptstyle 2$};
\node[below] at (3) {$\scriptstyle 3$};
\node[above] at (4) {$\scriptstyle 4$};
\node[below] at (5) {$\scriptstyle 5$};
\node[above] at (6) {$\scriptstyle 6$};
\node[below] at (7) {$\scriptstyle 7$};
\node[above] at (8) {$\scriptstyle 8$};
\node[below] at (9) {$\scriptstyle 9$};
\node[below] at (10) {$\scriptstyle 10$};
\node[below] at (11) {$\scriptstyle 11$};
\draw[thick] (1) -- (2);
\draw[thick] (3) -- (5);
\draw[thick] (5) -- (7);
\draw[thick] (4) -- (6);
\draw[thick] (6) -- (8);
\draw[thick] (9) -- (10);
\draw[thick] (10) -- (11);
\draw[thick, dashed, color=specialgold] (1) -- (3);
\draw[thick, dashed, color=specialgold] (2) -- (5);
\draw[thick, dashed, color=specialgold] (6) -- (9);
\draw[thick, dashed, color=specialgold] (8) -- (10);
\end{tikzpicture}
\qquad$\longrightarrow$\qquad
\begin{tikzpicture}[baseline = -1.1cm]
\begin{scope}[xshift = 0cm, yshift = 0cm]
\draw[fill] (0, 0) circle (2pt) node[inner sep = 2pt] (1) {};
\draw[fill] (1, 0) circle (2pt) node[inner sep = 2pt] (2) {};
\end{scope}
\begin{scope}[xshift = 0cm, yshift = -1cm]
\draw[fill] (0, 0) circle (2pt) node[inner sep = 2pt] (3) {};
\draw[fill] (1, 0) circle (2pt) node[inner sep = 2pt] (5) {};
\draw[fill] (2, 0) circle (2pt) node[inner sep = 2pt] (7) {};
\end{scope}
\begin{scope}[xshift = 3cm, yshift = -1cm]
\draw[fill] (0, 0) circle (2pt) node[inner sep = 2pt] (4) {};
\draw[fill] (1, 0) circle (2pt) node[inner sep = 2pt] (6) {};
\draw[fill] (2, 0) circle (2pt) node[inner sep = 2pt] (8) {};
\end{scope}
\begin{scope}[xshift = 4cm, yshift = -2cm]
\draw[fill] (0, 0) circle (2pt) node[inner sep = 2pt] (9) {};
\draw[fill] (1, 0) circle (2pt) node[inner sep = 2pt] (10) {};
\draw[fill] (2, 0) circle (2pt) node[inner sep = 2pt] (11) {};
\end{scope}
\node[above] at (1) {$\scriptstyle 1$};
\node[above] at (2) {$\scriptstyle 2$};
\node[below] at (3) {$\scriptstyle 3$};
\node[above] at (4) {$\scriptstyle 4$};
\node[below] at (5) {$\scriptstyle 5$};
\node[above] at (6) {$\scriptstyle 6$};
\node[below] at (7) {$\scriptstyle 7$};
\node[above] at (8) {$\scriptstyle 8$};
\node[below] at (9) {$\scriptstyle 9$};
\node[below] at (10) {$\scriptstyle 10$};
\node[below] at (11) {$\scriptstyle 11$};
\draw[thick] (1) -- (2);
\draw[thick] (3) -- (5);
\draw[thick] (5) -- (7);
\draw[thick] (4) -- (6);
\draw[thick] (6) -- (8);
\draw[thick] (9) -- (10);
\draw[thick] (10) -- (11);
\draw[thick, dashed, color=specialgold] (1) -- (3);
\draw[thick, dashed, color=specialgold] (2) -- (5);
\draw[thick, dashed, color=specialgold] (6) -- (9);
\draw[thick, dashed, color=specialgold] (8) -- (10);
\end{tikzpicture}.
\end{center}
Label each column and row along the upper and right sides of the drawing, respectively.  Then number each label, starting from the upper left, and proceeding rightward (for columns) or down (for rows), as in
\begin{center}
\begin{tikzpicture}[baseline = -1.1cm]
\begin{scope}[xshift = 0cm, yshift = 0cm]
\draw[fill] (0, 0) circle (2pt) node[inner sep = 2pt] (1) {};
\draw[fill] (1, 0) circle (2pt) node[inner sep = 2pt] (2) {};
\end{scope}
\begin{scope}[xshift = 0cm, yshift = -1cm]
\draw[fill] (0, 0) circle (2pt) node[inner sep = 2pt] (3) {};
\draw[fill] (1, 0) circle (2pt) node[inner sep = 2pt] (5) {};
\draw[fill] (2, 0) circle (2pt) node[inner sep = 2pt] (7) {};
\end{scope}
\begin{scope}[xshift = 3cm, yshift = -1cm]
\draw[fill] (0, 0) circle (2pt) node[inner sep = 2pt] (4) {};
\draw[fill] (1, 0) circle (2pt) node[inner sep = 2pt] (6) {};
\draw[fill] (2, 0) circle (2pt) node[inner sep = 2pt] (8) {};
\end{scope}
\begin{scope}[xshift = 4cm, yshift = -2cm]
\draw[fill] (0, 0) circle (2pt) node[inner sep = 2pt] (9) {};
\draw[fill] (1, 0) circle (2pt) node[inner sep = 2pt] (10) {};
\draw[fill] (2, 0) circle (2pt) node[inner sep = 2pt] (11) {};
\end{scope}
\node[above] at (1) {$\scriptstyle 1$};
\node[above] at (2) {$\scriptstyle 2$};
\node[below] at (3) {$\scriptstyle 3$};
\node[above] at (4) {$\scriptstyle 4$};
\node[below] at (5) {$\scriptstyle 5$};
\node[above] at (6) {$\scriptstyle 6$};
\node[below] at (7) {$\scriptstyle 7$};
\node[above] at (8) {$\scriptstyle 8$};
\node[below] at (9) {$\scriptstyle 9$};
\node[below] at (10) {$\scriptstyle 10$};
\node[below] at (11) {$\scriptstyle 11$};
\draw[thick] (1) -- (2);
\draw[thick] (3) -- (5);
\draw[thick] (5) -- (7);
\draw[thick] (4) -- (6);
\draw[thick] (6) -- (8);
\draw[thick] (9) -- (10);
\draw[thick] (10) -- (11);
\draw[thick, dashed, color=specialgold] (1) -- (3);
\draw[thick, dashed, color=specialgold] (2) -- (5);
\draw[thick, dashed, color=specialgold] (6) -- (9);
\draw[thick, dashed, color=specialgold] (8) -- (10);
\begin{scope}[xshift = 7cm, yshift = 0.6cm]
\node at (0, -0.9) {$\RRR_{1}$};
\node at (0, -2.1) {$\RRR_{2}$};
\end{scope}
\begin{scope}[xshift = -0.5cm, yshift = 0cm]
\node at (1, 1) {$\CCC_{1}$};
\node at (2, 1) {$\CCC_{2}$};
\node at (4, 1) {$\CCC_{3}$};
\node at (5, 1) {$\CCC_{4}$};
\node at (6, 1) {$\CCC_{5}$};
\end{scope}
\end{tikzpicture}
\end{center}
with 
\[
\CCC_{1} = \left\{ \edge{1}{2}, \edge{3}{5}\right\},\; \CCC_{2} = \left\{ \edge{5}{7} \right\},\;\CCC_{3} = \left\{ \edge{4}{6} \right\},\; \CCC_{4} = \left\{ \edge{6}{8}, \edge{9}{10} \right\},
\]
\[
\CCC_{5} =  \left\{ \edge{10}{11} \right\},\;\RRR_{1} = \left\{ \dedge{1}{3}\;, \dedge{2}{5} \right\},\;\text{and}\;\RRR_{2} = \left\{ \dedge{6}{9}\;, \dedge{8}{10} \right\}.
\]
Finally, enumerate the set $\operatorname{CR}(\SSS)$ by first listing columns according to number, and then rows in the same way.  This enumeration defines the final ordering, as in:
\[
\CCC_{1} < \CCC_{2} < \CCC_{3} < \CCC_{4} < \CCC_{5} < \RRR_{1} < \RRR_{2}.
\]

\begin{rem}
Aside from ease of description, I do not know of any benefit of this order over any other, at least in the scope of this paper.  However, the analogous choice of order for shortened polyominoes is fairly significant (see~\cite[Section 5]{SzuEnEtAl13}), so it would be interesting to know of a property of ideals or normal subgroups which prefers one order over another.
\end{rem}

\section{Lie ideals}
\label{sec:ideals}

Fix a prime power $q$.  This section describes a bijective indexing for the ideals of the Lie algebra $\ut_{n} = \ut_{n}(\FF_{q})$.  Recall the definitions of $\mathscr{T}_{n}(q)$ from Section~\ref{sec:labelingsplices}, $\operatorname{CR}(\SSS)$ from Section~\ref{sec:spliceordering}, and $\overset{\to}{\mathscr{G}}_{\operatorname{CR}(\SSS)}(q)$ from Section~\ref{sec:matroidgraphs}.

\begin{thm}\label{thm:ideals}
There is a bijection
\[
 \{\text{Lie algebra ideals of $\ut_{n}$}\}  \longleftrightarrow \left\{(\SSS, \sigma, \MMM, \tau) \;\middle|\; (\SSS, \sigma) \in \mathscr{T}_{n}(q), (\MMM, \tau) \in \overset{\to}{\mathscr{G}}_{\operatorname{CR}(\SSS)}(q)  \right\}
\]
\end{thm}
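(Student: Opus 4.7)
The plan is to build mutually inverse maps by fibering both sides over $\lambda \in \mathrm{NNSP}_n$. Given an ideal $\mathfrak{n}$, let $\lambda$ be the unique nonnesting set partition with $\operatorname{supp}(\mathfrak{n}) = \up{\lambda}$, as supplied by Section~\ref{sec:npsgs}. The first step is to establish the inclusion $\ut_{\upnum{\lambda}{2}} \subseteq \mathfrak{n}$: for each $\edge{r}{s} \in \upnum{\lambda}{2}$ lying above some $\edge{i}{j} \in \lambda$, the iterated bracket $[e_{r,i},[e_{j,s},x]]$ of a witness $x \in \mathfrak{n}$ with $x_{i,j} \neq 0$ produces $x_{i,j} e_{r,s}$ modulo entries of strictly larger height, so a downward induction on height completes the containment. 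This reduces the analysis to the finite-dimensional quotient $\bar{\mathfrak{n}} = \mathfrak{n}/\ut_{\upnum{\lambda}{2}}$ inside $V_\lambda = \ut_{\up{\lambda}}/\ut_{\upnum{\lambda}{2}}$, whose basis is indexed by $\lambda \sqcup \mu$ with $\mu = \upnum{\lambda}{1} - \upnum{\lambda}{2}$, and which inherits a residual Lie bracket action from $\ut_n$.

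From $\bar{\mathfrak{n}}$ I would extract the splice $(\SSS, \sigma)$ by examining which basis vectors fail to lie in $\bar{\mathfrak{n}}$: declare $\dedge{i}{k} \in \nu$ exactly when $\bar{e}_{i,k} \notin \bar{\mathfrak{n}}$, so that every element of $\mathfrak{n}$ with nonzero $(i,k)$-entry is pinned to a nonzero $\lambda$-level entry. The resulting proportionality constants define $\sigma$ on the bindings of $\SSS = \lambda \sqcup \nu$; axioms~\ref{S1} and~\ref{S2} follow from applying the Lie bracket identity $[e_{j,k}, e_{i,j}] = e_{i,k}$ in both the left-cover and right-cover directions to the generators of $\bar{\mathfrak{n}}$, and tightness~\ref{T} holds because $\nu \subseteq \mu$. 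Having fixed $(\SSS, \sigma)$, the residual freedom in $\bar{\mathfrak{n}}$ is a matter of linear algebra among the columns and rows of $\SSS$: each $\CCC \in \operatorname{cols}(\SSS)$ contributes a potential $\lambda$-level degree of freedom and each $\RRR \in \operatorname{rows}(\SSS)$ a $\nu$-level degree of freedom, and the admissible coupling data between them lives naturally on the totally ordered set $\operatorname{CR}(\SSS)$, where Proposition~\ref{prop:matroidgraphbijection} identifies it with a labeled loopless binary matroid $(\MMM, \tau) \in \overset{\to}{\mathscr{G}}_{\operatorname{CR}(\SSS)}(q)$.

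The inverse map reads off $(\SSS, \sigma, \MMM, \tau)$ and builds $\mathfrak{n}$ as the $\FF_q$-span of: (i) $\ut_{\upnum{\lambda}{2}}$; (ii) each $e_{i,k}$ for $\dedge{i}{k} \in \mu - \nu$; and (iii) one generator for each element of the basis side $U \subseteq \operatorname{CR}(\SSS)$ of $\MMM$, whose $\lambda$- and $\nu$-entries are prescribed by $\sigma$ and modified by $\tau$-weighted contributions from its $\MMM$-neighbors in $V$. I anticipate the main obstacle will be verifying that this span is closed under $[e_{p,q}, \cdot]$ for every $\edge{p}{q} \in [[n]]$: closure requires a case analysis according to how $\edge{p}{q}$ interacts with the column and row structure of $\SSS$, and one must confirm that every bracket output re-expresses in the chosen generators without imposing further linear constraints on $(\SSS, \sigma)$ or $(\MMM, \tau)$. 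Once closure is established, the two constructions are inverse by design: the forward map recovers $\lambda$ from the support of $\mathfrak{n}$, $\nu$ from the basis vectors missing in $\bar{\mathfrak{n}}$, $\sigma$ from the forced proportionalities, and $(\MMM, \tau)$ from the linear dependencies among the residual generators.
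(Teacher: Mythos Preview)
Your outline follows the same two-stage strategy as the paper (extract a labeled tight splice, then encode the residual freedom as a labeled loopless binary matroid), but there are two concrete gaps that would prevent the argument from closing.

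First, you misidentify the ``main obstacle.''  Once you know $\ut_{\upnum{\lambda}{2}} \subseteq \mathfrak{n}$, the paper shows that every ideal in $\operatorname{fam}(\SSS,\sigma)$ is sandwiched between two explicit ideals $\mathfrak{D}_{\SSS,\sigma} \subseteq \mathfrak{n} \subseteq \mathfrak{Z}_{\SSS,\sigma}$ with $[\ut_n, \mathfrak{Z}_{\SSS,\sigma}] = \mathfrak{D}_{\SSS,\sigma}$, so \emph{any} $\FF_q$-subspace between $\mathfrak{D}_{\SSS,\sigma}$ and $\mathfrak{Z}_{\SSS,\sigma}$ is automatically an ideal---no case analysis on brackets $[e_{p,q},\cdot]$ is needed.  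The genuine obstacle is the opposite one: not every such intermediate subspace belongs to $\operatorname{fam}(\SSS,\sigma)$.  For instance $\ut_{\upnum{\lambda}{1}}$ sits between $\mathfrak{D}_{\SSS,\sigma}$ and $\mathfrak{Z}_{\SSS,\sigma}$ but has the wrong support when $\SSS \neq \lambda$.  You need explicit conditions (the paper's Lemma~\ref{lem:reducedbasis}) saying that the image of $\mathfrak{n}$ in $\mathfrak{Z}_{\SSS,\sigma}/\mathfrak{D}_{\SSS,\sigma}$ must hit every column coordinate and must not contain any pure row vector $b_\RRR$.

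Second, your inverse map is not correct as stated.  Taking ``one generator for each element of the basis side $U \subseteq \operatorname{CR}(\SSS)$'' would include $b_\RRR$ whenever a row $\RRR$ lies in $U$ but has no outgoing edges in $\MMM$; the resulting subspace then contains $\FF_q e_{i,k}$ for $\dedge{i}{k} \in \RRR \subseteq \nu$, contradicting the defining property of $\nu$.  The paper's definition~\eqref{eq:idealdef} excludes precisely these isolated row vertices.  Relatedly, your forward map never actually constructs $(\MMM,\tau)$ from $\mathfrak{n}$: saying the coupling data ``lives naturally'' on $\operatorname{CR}(\SSS)$ is not a recipe.  The paper extracts it by taking the reduced row-echelon basis of $\mathfrak{n}/\mathfrak{D}_{\SSS,\sigma}$ in the coordinates $\{b_\VVV : \VVV \in \operatorname{CR}(\SSS)\}$ and reading off pivot columns as $U$ and nonzero non-pivot entries as labeled edges---this is where the total order on $\operatorname{CR}(\SSS)$ is actually used, and where the two conditions above translate exactly into the defining properties of $\overset{\to}{\mathscr{G}}_{\operatorname{CR}(\SSS)}(q)$.
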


With Theorem~\ref{introthm:A}, this implies Theorem~\ref{introthm:B}.  With the formula for $|\overset{\to}{\mathscr{G}}_{\operatorname{CR}(\SSS)}(q)|$ from Proposition~\ref{prop:matroidcount}, the next result also follows.

\begin{cor}\label{cor:numideals}
The number of Lie algebra ideals in $\ut_{n}$ is
\[
\sum_{\SSS \in \mathscr{T}_{n}} (q-1)^{|\operatorname{bind}(\SSS)|}
\sum_{j = 0}^{|\operatorname{CR}(\SSS)|} (q-1)^{|\operatorname{CR}(\SSS)|-j} \; \qstirling{|\operatorname{CR}(\SSS)|}{j}_{q}.
\]
\end{cor}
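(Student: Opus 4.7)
The plan is to unpack the bijection of Theorem~\ref{thm:ideals} and count tuples $(\SSS, \sigma, \MMM, \tau)$ by first fixing $\SSS \in \mathscr{T}_n$ and then independently counting the allowable $\sigma$'s and $(\MMM, \tau)$'s. Since the bijection identifies ideals of $\ut_n$ with tuples $(\SSS, \sigma, \MMM, \tau)$ in which $(\SSS, \sigma) \in \mathscr{T}_n(q)$ and $(\MMM, \tau) \in \overset{\to}{\mathscr{G}}_{\operatorname{CR}(\SSS)}(q)$, I would write
\[
|\{\text{ideals of $\ut_n$}\}| = \sum_{\SSS \in \mathscr{T}_n} |\{\sigma : \operatorname{bind}(\SSS) \to \FF_q^\times\}| \cdot |\overset{\to}{\mathscr{G}}_{\operatorname{CR}(\SSS)}(q)|,
\]
using the fact that, by definition in Section~\ref{sec:labelingsplices}, a labeling $\sigma$ is nothing more than a free function $\operatorname{bind}(\SSS) \to \FF_q^\times$ and is independent of the matroid data.

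Next, the number of labelings $\sigma$ is simply $(q-1)^{|\operatorname{bind}(\SSS)|}$, and the cardinality $|\overset{\to}{\mathscr{G}}_{\operatorname{CR}(\SSS)}(q)|$ is given directly by Proposition~\ref{prop:matroidcount} applied to the totally ordered ground set $\operatorname{CR}(\SSS)$, yielding
\[
|\overset{\to}{\mathscr{G}}_{\operatorname{CR}(\SSS)}(q)| = \sum_{j = 0}^{|\operatorname{CR}(\SSS)|} (q-1)^{|\operatorname{CR}(\SSS)|-j}\, \qstirling{|\operatorname{CR}(\SSS)|}{j}_q.
\]
Substituting this into the display above gives exactly the claimed formula.

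There is no real obstacle here; the proof is a short bookkeeping argument combining the bijection of Theorem~\ref{thm:ideals} with the enumerative formula of Proposition~\ref{prop:matroidcount}. The only mild subtlety worth noting is that the total order on $\operatorname{CR}(\SSS)$ constructed in Section~\ref{sec:spliceordering} is what allows Proposition~\ref{prop:matroidcount} to be invoked, but any total order would yield the same count, so this is purely a consistency check. One could optionally record the alternative form of $|\operatorname{CR}(\SSS)|$ given by Proposition~\ref{prop:CRsize}, namely $|\lambda| + |\nu| - 2|\operatorname{bind}(\SSS)|$, but this is cosmetic and not needed for the proof.
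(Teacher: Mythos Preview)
Your proof is correct and follows essentially the same approach as the paper: the paper deduces the corollary immediately from the bijection of Theorem~\ref{thm:ideals} together with the count of $\overset{\to}{\mathscr{G}}_{\operatorname{CR}(\SSS)}(q)$ from Proposition~\ref{prop:matroidcount}, which is exactly what you do.
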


The aim of this section is to prove Theorem~\ref{thm:ideals}; an outline of the proof structure follows.
\begin{enumerate}
\item Each labeled tight splice $(\SSS, \sigma) \in \mathscr{T}_{n}(q)$ selects a family of ideals, denoted $\operatorname{fam}(\SSS, \sigma)$.  Proposition~\ref{prop:splicecondition} states that
\[
\{\text{Lie algebra ideals of $\ut_{n}$}\} = \bigsqcup_{(\SSS, \sigma) \in \mathscr{T}_{n}(q)} \hspace{-1em}\operatorname{fam}(\SSS, \sigma).
\]
For each $(\SSS, \sigma)$, there are ideals $\mathfrak{D}_{\SSS, \sigma}$ and $\mathfrak{Z}_{\SSS, \sigma}$ such that $\mathfrak{D}_{\SSS, \sigma} \subseteq \mathfrak{n} \subseteq \mathfrak{Z}_{\SSS, \sigma}$ for each $\mathfrak{n} \in \operatorname{fam}(\SSS, \sigma)$.  Every $\mathfrak{n} \in \operatorname{fam}(\SSS, \sigma)$ is then determined by its image in $\mathfrak{Z}_{\SSS, \sigma}/\mathfrak{D}_{\SSS, \sigma}$.

\item For each $(\SSS, \sigma) \in \mathscr{T}_{n}(q)$, Proposition~\ref{prop:matroidbijection} gives a explicit bijection
\[
 \overset{\to}{\mathscr{G}}_{\operatorname{CR}(\SSS)}(q) \longleftrightarrow \operatorname{fam}(\SSS, \sigma).
\]
Each $\FF_{q}^{\times}$-labeled loopless binary matroid $(\MMM, \tau)$ on $\operatorname{CR}(\SSS)$ uniquely determines a subspace of $\mathfrak{Z}_{\SSS, \sigma}/\mathfrak{D}_{\SSS, \sigma}$ which lifts to an ideal in $\operatorname{fam}(\SSS, \sigma)$.

\end{enumerate}

\begin{rem}
The notation used for the ideals $\mathfrak{Z}_{\SSS, \sigma}$ and $\mathfrak{D}_{\SSS, \sigma}$ is meant to evoke the terms center and derivation: Proposition~\ref{prop:CandZ} and Lemma~\ref{lem:Zcommutator} respectively state that for $\mathfrak{n} \in \operatorname{fam}(\SSS, \sigma)$,
\[
Z(\ut_{n}/\mathfrak{n}) = \mathfrak{Z}_{\SSS, \sigma}/\mathfrak{n}, \quad\text{and}\quad [\ut_{n}, \mathfrak{Z}_{\SSS, \sigma}] = \mathfrak{D}_{\SSS, \sigma}.
\]
\end{rem}

\subsection{Splices and families of ideals}
\label{sec:splicecondition}

For $\lambda \in \mathrm{NNSP}_{n}$ and $(\SSS, \sigma) \in \mathscr{T}_{\lambda}(q)$, define a subspace of $\ut_{n}$
\[
\mathfrak{Z}_{\SSS, \sigma} = \left\{a \in \ut_{\lambda} \;\middle|\;  a_{i, j} = \sigma\left(\binding{i}{j}{j+1}{l}\right)a_{j+1, l} \;\text{ for each}\; \binding{i}{j}{j+1}{l} \in \operatorname{bind}(\SSS)\right\}.
\]
In $\mathfrak{Z}_{\SSS, \sigma}$, matrix entries corresponding to elements of the same column of $\SSS$ have a fixed ratio.  Section~\ref{sec:CandZ} gives a more in-depth description, including a basis and several examples.

Now suppose that $\SSS = \lambda \sqcup \nu$, and define the \emph{$(\SSS, \sigma)$-family} of ideals
\[
\operatorname{fam}(\SSS, \sigma) = \left\{\text{ideals $\mathfrak{n}$ of $\ut_{n}$} \;\middle|\;  \begin{array}{c}
\text{$\operatorname{supp}(\mathfrak{n}) = \up{\lambda}$,} \\
\text{$\{{\textstyle \edge{i}{j}} \in \upnum{\lambda}{1} \;|\; \FF_{q}e_{i, j} \not\subseteq \mathfrak{n}\} = \nu$,}\\
\text{and $\mathfrak{n} \subseteq \mathfrak{Z}_{\SSS, \sigma}$}
\end{array}\right\}.
\]

\begin{prop}\label{prop:splicecondition}
Each ideal of $\ut_{n}$ is contained in a unique family:
\[
\{\text{ideals of $\ut_{n}$\}} = \bigsqcup_{(\SSS, \sigma) \in \mathscr{T}_{n}(q)} \operatorname{fam}(\SSS, \sigma).
\]
\end{prop}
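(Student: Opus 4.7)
The plan is to recover $(\SSS, \sigma)$ from any ideal $\mathfrak{n}$ by the recipe forced by the definition of $\operatorname{fam}(\SSS, \sigma)$. Concretely, I would set $\lambda = \min(\operatorname{supp}(\mathfrak{n}))$ (a nonnesting set partition since $\operatorname{supp}(\mathfrak{n})$ is an upper set of $[[n]]$, as observed at the start of Section~\ref{sec:npsgs}), $\nu = \{\edge{i}{j} \in \upnum{\lambda}{1} \;|\; \FF_{q}e_{i,j} \not\subseteq \mathfrak{n}\}$, and $\sigma(\binding{i}{j}{j+1}{l}) = a_{i,j}/a_{j+1,l}$ for any $a \in \mathfrak{n}$ with $a_{j+1,l} \ne 0$. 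Disjointness of the families is then automatic: any $\mathfrak{n} \in \operatorname{fam}(\SSS', \sigma')$ is forced by the defining conditions to satisfy exactly this recipe, so at most one $(\SSS', \sigma')$ can contain $\mathfrak{n}$. The entire proof therefore reduces to showing that this recipe produces a valid element of $\mathscr{T}_{n}(q)$ and that $\mathfrak{n}$ lies in the corresponding family.

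The core intermediate step is the claim that the auxiliary set $\NNN := \{\edge{i}{j} \;|\; e_{i,j} \in \mathfrak{n}\}$ contains $\upnum{\lambda}{2}$; $\NNN$ is an upper set in $\preceq$ because $e_{r, s} = [e_{r, r+1}, e_{r+1, s}] = [e_{r, s-1}, e_{s-1, s}]$ and $\mathfrak{n}$ is an ideal. The inclusion $\upnum{\lambda}{2} \subseteq \NNN$ is precisely the tightness condition \ref{T}. I would prove this by induction on the height $s - r$ of $\edge{r}{s} \in \upnum{\lambda}{2}$: given a witnessing $\edge{i}{j} \in \lambda$ with $r \le i < j \le s$ and $(i - r) + (s - j) \ge 2$, the ``diagonal'' case $r < i$ and $j < s$ follows cleanly from the double commutator $[e_{r, r+1}, [a, e_{s-1, s}]]$ for $a \in \mathfrak{n}$ with $a_{r+1, s-1} \ne 0$, which evaluates to $a_{r+1, s-1} e_{r, s}$ since the hypotheses force $r + 1 < s - 1$. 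The ``horizontal'' and ``vertical'' cases (only one of $i > r$ or $j < s$ strict) either reduce to a smaller instance of $\upnum{\lambda}{2}$ via $e_{r, s} = [e_{r, r+1}, e_{r+1, s}]$ or require a single-bracket argument combined with the inductive hypothesis to eliminate the extraneous entries.

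Once tightness is in hand, condition \ref{S1} follows from the poset description of $\upnum{\lambda}{1} \setminus \upnum{\lambda}{2}$: each such edge covers some edge of $\lambda$ at height difference exactly one, so shares a vertex with it. For \ref{S2}, given $\dedge{i}{j+1} \in \nu$ and $\edge{i}{j} \in \lambda$, I would pick any $a \in \mathfrak{n}$ with $a_{i, j} \ne 0$; the element $[a, e_{j, j+1}] \in \mathfrak{n}$ contains $a_{i, j}$ in position $(i, j+1)$, and since $e_{i, j+1} \not\in \mathfrak{n}$ it must have a compensating nonzero entry in either row $j$ or column $j+1$, which combined with the nonnesting structure of $\lambda$ and the already-established tightness yields $\edge{j+1}{l} \in \lambda$ and $\dedge{j}{l} \in \nu$ for the appropriate $l$. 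An analogous cancellation argument establishes that $\sigma$ is well-defined and that $\mathfrak{n} \subseteq \mathfrak{Z}_{\SSS, \sigma}$: two candidates $a, b \in \mathfrak{n}$ giving distinct ratios could be combined into $c := b_{j+1, l}a - a_{j+1, l}b \in \mathfrak{n}$ with $c_{j+1, l} = 0$ and $c_{i, j} \ne 0$; applying $[-, e_{j, j+1}]$ and cleaning up via the established structure would produce a scalar multiple of $e_{i, j+1}$ in $\mathfrak{n}$, contradicting $\dedge{i}{j+1} \in \nu$.

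The main obstacle is the inductive step $\upnum{\lambda}{2} \subseteq \NNN$: the commutators leave stray entries (in column $s$ and row $s-1$, or in column $r+1$ and row $r$, depending on the case) whose elimination requires careful use of the inductive hypothesis, and corner cases where the ``reach'' of the commutators is too short (e.g.\ $i = r + 2$ with $j = s$) need separate handling via an intermediate edge in $\upnum{\lambda}{1} \setminus \upnum{\lambda}{2}$. Once this base layer is secured, the subsequent verifications of \ref{S1}, \ref{S2}, and the well-definedness of $\sigma$ are all variations on the same commutator/cancellation technique.
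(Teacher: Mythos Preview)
Your approach is essentially the paper's: recover $(\SSS,\sigma)$ by first establishing $\ut_{\upnum{\lambda}{2}}\subseteq\mathfrak{n}$ via double commutators, then reading off the splice structure and labels from $[\FF_q e_{j,j+1},a]$ modulo $\ut_{\upnum{\lambda}{2}}$; the paper packages these as Lemmas~\ref{lem:upnum2}, \ref{lem:commutator}, and~\ref{lem:splicecondition}. The one place where the paper is cleaner is exactly the step you flag as the main obstacle: rather than induct on height and chase down the stray entries case by case, Lemma~\ref{lem:upnum2} constructs for each $\edge{i}{l}\in\upnum{\lambda}{2}$ an element $b(i,l)\in[\ut_n,[\ut_n,\mathfrak{n}]]$ with $b(i,l)_{i,l}\ne 0$ and $\operatorname{supp}(\{b(i,l)\})\subseteq\up{\{\edge{i}{l}\}}$, and then observes that the transition matrix to $\{e_{i,l}\}$ is triangular with respect to $\preceq$, which handles all the ``extraneous entries'' at once.
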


Proposition~\ref{prop:splicecondition} constitutes the first half of Theorem~\ref{thm:ideals}.  The proof of this proposition will follow a sequence of three lemmas.

\begin{lem}\label{lem:upnum2}
Let $\mathfrak{n}$ be an ideal of $\ut_{n}$, and take $\lambda \in \mathrm{NNSP}_{n}$ with $\up{\lambda} = \operatorname{supp}(\mathfrak{n})$. Then
\[
[\ut_{n}, [\ut_{n}, \mathfrak{n}]] = \ut_{\upnum{\lambda}{2}}.
\]
\end{lem}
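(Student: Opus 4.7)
The plan is to prove the two containments separately. The containment $[\ut_n, [\ut_n, \mathfrak{n}]] \subseteq \ut_{\upnum{\lambda}{2}}$ is immediate: since $\operatorname{supp}(\mathfrak{n}) = \up{\lambda}$ gives $\mathfrak{n} \subseteq \ut_\lambda$, the iterated-commutator identity $[\ut_n, [\ut_n, \ut_\lambda]] = \ut_{\upnum{\lambda}{2}}$ recalled in Section~\ref{sec:npsgs} delivers the conclusion.

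For the reverse containment, I will show that $e_{r, s} \in [\ut_n, [\ut_n, \mathfrak{n}]]$ for every $\edge{r}{s} \in \upnum{\lambda}{2}$, by downward induction on $\preceq$ within $\upnum{\lambda}{2}$ (a finite upper set whose unique maximum, when nonempty, is $\edge{1}{n}$). Given $\edge{r}{s}$, the first step is to fix a witness $\edge{i}{j} \in \lambda$ with $r \le i < j \le s$ and $(i - r) + (s - j) \ge 2$, and to pick $a \in \mathfrak{n}$ with $a_{i, j} \ne 0$ (which exists because $\edge{i}{j} \in \up{\lambda} = \operatorname{supp}(\mathfrak{n})$). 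The witness configuration falls into exactly one of three cases: (a) $r < i$ and $j < s$; (b) $r < i - 1$ and $j = s$; or (c) $r = i$ and $s > j + 1$. In case (a) a direct calculation yields $[e_{j, s}, [e_{r, i}, a]] = -a_{i, j} e_{r, s}$, which is the target up to a nonzero scalar. In cases (b) and (c) the analogous calculations yield, respectively,
\[
[e_{r, r+1}, [e_{r+1, i}, a]] = \sum_{v} a_{i, v}\, e_{r, v} \qquad \text{and} \qquad [e_{j+1, s}, [e_{j, j+1}, a]] = \sum_{u} a_{u, j}\, e_{u, s},
\]
summed over $v > i$ with $\edge{i}{v} \in \up{\lambda}$, and over $u < j$ with $\edge{u}{j} \in \up{\lambda}$, respectively.

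The main obstacle is that cases (b) and (c) return linear combinations rather than isolated matrix units, and overcoming it requires the antichain property of $\lambda$. Any competing $\edge{i'}{j'} \in \lambda$ with $j' \le j$ (in case (b)) or $i' \ge i$ (in case (c)) would be $\preceq$-comparable to $\edge{i}{j}$ unless it coincides with it, so the summation indices are pinched down to $v \ge j = s$ in case (b) and $u \le i = r$ in case (c). The target term is $v = s$ or $u = r$; each remaining term $e_{r, v}$ (for $v > s$) or $e_{u, s}$ (for $u < r$) indexes an edge that lies in $\upnum{\lambda}{2}$ and is strictly above $\edge{r}{s}$ in $\preceq$, so the inductive hypothesis places it in $[\ut_n, [\ut_n, \mathfrak{n}]]$ and it may be subtracted off. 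At the base case $\edge{r}{s} = \edge{1}{n}$ the pinched index set contains only the target, so the induction closes without a prior step.
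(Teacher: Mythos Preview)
Your proof is correct and follows essentially the same approach as the paper's. Both arguments exhibit, for each $\edge{r}{s} \in \upnum{\lambda}{2}$, a double commutator in $[\ut_n,[\ut_n,\mathfrak{n}]]$ whose $(r,s)$-entry is a nonzero multiple of $a_{i,j}$ and whose remaining support lies strictly above $\edge{r}{s}$ in $\preceq$; the paper packages this as a triangular change of basis, while you unwind the same triangularity as a downward induction. The specific commutators chosen in your cases (a) and (b) differ slightly from the paper's, but they produce the same triangular structure.
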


\begin{proof}
By assumption $\mathfrak{n} \subseteq \ut_{\lambda}$, so
\[
[\ut_{n}, [\ut_{n}, \mathfrak{n}]]  \subseteq [\ut_{n}, [\ut_{n}, \ut_{\lambda}]] = \ut_{\upnum{\lambda}{2}},
\]
establishing one direction of containment.  The other will follow from the construction of a basis for $\ut_{\upnum{\lambda}{2}}$ within $[\ut_{n}, [\ut_{n}, \mathfrak{n}]]$.  Given any $\edge{i}{l} \in \upnum{\lambda}{2}$, it is possible to choose
\begin{itemize}
\item $\edge{j}{k} \in \lambda$ with $\edge{j}{k} \preceq \edge{i}{l}$ and $ \Ht(\edge{j}{k}) + 2 \le \Ht(\edge{i}{l})$, and 

\item $a \in \ut_{n}$ with $a_{j, k} \neq 0$.

\end{itemize}
Using these choices, define a new element for each $\edge{i}{l} \in \upnum{\lambda}{2}$, 
\[
b(i, l) = \begin{cases}
[e_{k+1, l}, [e_{k, k+1}, a]] =  \sum_{r \le j} a_{r, k}e_{r, l} & \text{if $i = j < k < l$,}\\
[e_{i, j}, [e_{k, l}, a]] =  (-a_{j, k}) e_{i, l} & \text{if $i < j < k < l$,}\\
[e_{i, j-1}, [e_{j-1, j}, a]] = \sum_{s \ge k}  a_{j, s}e_{i, s} & \text{if $i < j < k = l$.}
\end{cases}
\]
The $i, l$-entry of $b(i, l)$ is necessarily nonzero and $\operatorname{supp}(\{b(i, l)\}) \subseteq \up{\{\edge{i}{l}\}}$, so the transition from $\{b(i, l) \;|\; \edge{i}{l} \in \upnum{\lambda}{2}\}$ to $\{e_{i, l} \;|\; \edge{i}{l} \in \upnum{\lambda}{2}\}$ is nonsingular and triangular with respect to $\preceq$.  Thus $\{b(i, l) \;|\; \edge{i}{l} \in \upnum{\lambda}{2}\}$ is a basis for $\ut_{\upnum{\lambda}{2}}$.
\end{proof}

\begin{lem}\label{lem:commutator}
Fix $\lambda \in \mathrm{NNSP}_{n}$, and take $a \in \ut_{\lambda}$.  For any $\edge{j}{j+1}\in[[n]]$,  
\[
[\FF_{q}e_{j, j+1}, a] + \ut_{\upnum{\lambda}{2}} = \begin{cases}
\FF_{q}(a_{i, j}e_{i, j+1} - a_{j+1, l}e_{j, l}) + \ut_{\upnum{\lambda}{2}} & \begin{array}{l}  \text{if $\edge{i}{j}, \edge{j+1}{l} \in \lambda$,} \end{array}\\ 
\FF_{q}(a_{i, j}e_{i, j+1}) + \ut_{\upnum{\lambda}{2}} & \begin{array}{l} \text{if $\edge{i}{j} \in \lambda$ and $\edge{j+1}{l} \notin \lambda$} \\  \quad \text{for all $l \in [n]$,} \end{array}\\
\FF_{q}(a_{j+1, l}e_{j, l}) + \ut_{\upnum{\lambda}{2}} & \begin{array}{l} \text{if $\edge{j+1}{l} \in \lambda$ and $\edge{i}{j} \notin \lambda$} \\ \quad \text{for all $i \in [n]$,} \end{array}\\
 \ut_{\upnum{\lambda}{2}} & \begin{array}{l} \text{otherwise.} \end{array}
\end{cases}
\]
\end{lem}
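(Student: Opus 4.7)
The plan is to expand $[e_{j,j+1}, a]$ as an explicit matrix commutator and then show that essentially all summands fall into $\ut_{\upnum{\lambda}{2}}$ except for at most two terms, which match the formula. Since $\lambda$ is nonnesting, it is an antichain in both $\preceq_L$ and $\preceq_R$, so there is at most one $i \in [n]$ with $\edge{i}{j} \in \lambda$ and at most one $l \in [n]$ with $\edge{j+1}{l} \in \lambda$. The four cases are therefore mutually exclusive and exhaustive.

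First I would compute directly. Since $a$ is strictly upper triangular, the commutator evaluates to
\[
[e_{j,j+1}, a] \;=\; \sum_{s > j+1} a_{j+1,s}\, e_{j,s} \;-\; \sum_{r < j} a_{r,j}\, e_{r,j+1},
\]
with the would-be diagonal contribution $a_{j+1,j+1} - a_{j,j}$ vanishing. Since $a \in \ut_\lambda$, only values of $s$ with $\edge{j+1}{s} \in \up{\lambda}$ (respectively, $r$ with $\edge{r}{j} \in \up{\lambda}$) contribute a nonzero term.

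The heart of the argument is the following height estimate, which I would prove next: \emph{if $\edge{j+1}{s} \in \up{\lambda}$ and $s$ is not the (unique, when existent) right endpoint $l$ of an edge $\edge{j+1}{l} \in \lambda$, then $\edge{j}{s} \in \upnum{\lambda}{2}$}. To check this, pick $\edge{p}{q} \in \lambda$ witnessing $\edge{j+1}{s} \in \up{\lambda}$, so $j+1 \le p < q \le s$. If $p = j+1$, the antichain property in $\preceq_R$ forces $q = l$, hence $s > l$ by hypothesis, and $\edge{j}{s} \succeq \edge{j+1}{l}$ has height difference $s - l + 1 \ge 2$. Otherwise $p \ge j+2$, so $\edge{j}{s} \succeq \edge{p}{q}$ with height difference $(s - q) + (p - j) \ge 2$. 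Interchanging the roles of left and right endpoints (and invoking the antichain property in $\preceq_L$) yields the dual statement: if $\edge{r}{j} \in \up{\lambda}$ and $r \neq i$, then $\edge{r}{j+1} \in \upnum{\lambda}{2}$.

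These two sub-claims together imply that, modulo $\ut_{\upnum{\lambda}{2}}$, the only summands of $[e_{j,j+1}, a]$ that may survive are $a_{j+1,l}\, e_{j,l}$ (when $\edge{j+1}{l} \in \lambda$) and $-a_{i,j}\, e_{i,j+1}$ (when $\edge{i}{j} \in \lambda$), so the four cases of the lemma fall out by recording which of these terms are present. The primary obstacle is bookkeeping: correctly orienting $\preceq$ (so that wider intervals sit higher) and applying the correct antichain condition in each subcase; once this setup is clear, the height estimates are elementary.
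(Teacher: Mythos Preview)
Your proof is correct and follows essentially the same route as the paper: both compute $[e_{j,j+1},a]$ explicitly as $\sum_{s>j+1} a_{j+1,s}e_{j,s} - \sum_{r<j} a_{r,j}e_{r,j+1}$ and then argue that all terms except the indicated ones lie in $\ut_{\upnum{\lambda}{2}}$. Your height estimate is just a more explicit version of the paper's one-line claim that $\edge{i}{j}\in\up{\lambda}$ and $\edge{i}{j+1}\notin\upnum{\lambda}{2}$ together force $\edge{i}{j}\in\lambda$; the underlying antichain/height reasoning is the same.
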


Lemma~\ref{lem:commutator} should be seen more as a computational shortcut than a precise description of the set $[\FF_{q}e_{j, j+1}, a] + \ut_{\upnum{\lambda}{2}}$: in the event that one or both of $a_{j+1, l}$ or $a_{i, j}$ is zero, several of the cases above are equal.

\begin{proof}
To begin, note that $[\ut_{n}, \ut_{\lambda}] = \ut_{\upnum{\lambda}{1}}$, so $[\ut_{n}, a] \subseteq \ut_{\upnum{\lambda}{1}}$  for any $a \in \ut_{\lambda}$.  Every element of $\ut_{\upnum{\lambda}{1}}/\ut_{\upnum{\lambda}{2}}$ has a canonical representative with all nonzero entries occurring in positions indexed by elements of $\upnum{\lambda}{1} \setminus \upnum{\lambda}{2}$, and it is these representatives which appear in the statement of the lemma.

For $\edge{j}{j+1} \in [[n]]$ and any $a \in \ut_{\lambda}$, 
\[
[\FF_{q}e_{j, j+1}, a] = \FF_{q}\left( \sum_{l > j+1} a_{j+1, l}e_{j, l} - \sum_{i < j} a_{i, j}e_{i, j+1} \right) = \FF_{q}\left(\sum_{i < j} a_{i, j}e_{i, j+1} -  \sum_{l > j+1} a_{j+1, l}e_{j, l} \right).
\]
Passing to the quotient $\ut_{\lambda}/\ut_{\upnum{\lambda}{2}}$, the canonical representative of $[\FF_{q}e_{j, j+1}, a] + \ut_{\upnum{\lambda}{2}}$ can have nonzero $i, j+1$-entry only if $\edge{i}{j} \in \up{\lambda}$ and $\edge{i}{j+1} \notin \upnum{\lambda}{2}$; together these conditions imply $\edge{i}{j} \in \lambda$.  Similarly, the representative has nonzero $j, l$-entry only if $\edge{j+1}{l} \in \lambda$.
\end{proof}

The lemma is a slightly stronger statement than Proposition~\ref{prop:splicecondition} which will also apply to normal subgroups, as discussed in Section~\ref{sec:normalsubgroups}.

\begin{lem}\label{lem:splicecondition}
For $\lambda \in \mathrm{NNSP}_{n}$, let $\mathfrak{n}$ be an arbitrary subset of $\ut_{\lambda}$, and suppose that $\mathfrak{m}$ is an additively closed subset of $\ut_{n}$ which satisfies
\[
\{[a, b] \;|\; a \in \ut_{n}, b \in \mathfrak{n}\} + \ut_{\upnum{\lambda}{2}} \subseteq \mathfrak{m} \subseteq \ut_{\upnum{\lambda}{1}}.
\]
Let $\nu = \{{\textstyle \edge{r}{s}} \in \upnum{\lambda}{1} \;|\; \FF_{q}e_{r, s} \not\subseteq \mathfrak{m}\}$.  Then $\SSS = \lambda \sqcup \nu$ is a tight splice of $\lambda$, and there is a unique labeling $\sigma: \operatorname{bind}(\SSS) \to \FF_{q}^{\times}$ for which $\mathfrak{n} \subseteq \mathfrak{Z}_{\SSS, \sigma}$.
\end{lem}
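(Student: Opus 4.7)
The plan is to verify the splice axioms (S1) and (S2) together with tightness (T) for $\SSS = \lambda \sqcup \nu$, and then to extract the labeling $\sigma$ binding by binding. The main workhorse is Lemma~\ref{lem:commutator}, which describes $[e_{j, j+1}, a]$ modulo $\ut_{\upnum{\lambda}{2}}$; combined with the hypothesis that $[\ut_n, \mathfrak{n}] + \ut_{\upnum{\lambda}{2}} \subseteq \mathfrak{m}$, each such commutator is forced into $\mathfrak{m}$, linking the membership structure of $\nu$ to the matrix entries of elements of $\mathfrak{n}$.

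Tightness and (S1) are essentially bookkeeping. Since $\ut_{\upnum{\lambda}{2}} \subseteq \mathfrak{m}$, we have $\nu \subseteq \upnum{\lambda}{1} \setminus \upnum{\lambda}{2}$, which is exactly (T), and disjointness $\lambda \cap \nu = \emptyset$ follows from $\lambda \cap \upnum{\lambda}{1} = \emptyset$. For (S1), any $\edge{i}{k} \in \upnum{\lambda}{1} \setminus \upnum{\lambda}{2}$ has a $\preceq$-generator in $\lambda$ of height exactly $k - i - 1$; the only such edges $\preceq$-below $\edge{i}{k}$ are $\edge{i}{k-1}$ and $\edge{i+1}{k}$, both of the shape (S1) calls for.

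For (S2), suppose $\dedge{i}{k} \in \nu$ and $\edge{i}{j} \in \lambda$ with $j < k$; the height constraint $\edge{i}{k} \in \upnum{\lambda}{1} \setminus \upnum{\lambda}{2}$ forces $k = j+1$. I would argue by contrapositive for the existence of $l$ with $\edge{j+1}{l} \in \lambda$ and $\dedge{j}{l} \in \nu$. Picking $a \in \mathfrak{n}$ with $a_{i, j} \neq 0$, Lemma~\ref{lem:commutator} pins down the shape of $[e_{j, j+1}, a]$ modulo $\ut_{\upnum{\lambda}{2}}$. If no $\edge{j+1}{l}$ lies in $\lambda$, the commutator reduces to $a_{i, j} e_{i, j+1}$; and if the unique $l$ with $\edge{j+1}{l} \in \lambda$ has $\dedge{j}{l} \notin \nu$, then $\FF_q e_{j, l} \subseteq \mathfrak{m}$ allows one to subtract the $e_{j, l}$ piece of the commutator $a_{i, j} e_{i, j+1} - a_{j+1, l} e_{j, l}$ to again isolate $a_{i, j} e_{i, j+1} \in \mathfrak{m}$. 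In either case, scaling via $[x e_{j, j+1}, a] \in \mathfrak{m}$ for $x \in \FF_q$ lifts this to the full line $\FF_q e_{i, j+1} \subseteq \mathfrak{m}$, contradicting $\dedge{i}{j+1} \in \nu$.

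Finally, for each binding $\binding{i}{j}{j+1}{l} \in \operatorname{bind}(\SSS)$ Lemma~\ref{lem:commutator} supplies $a_{i, j} e_{i, j+1} - a_{j+1, l} e_{j, l} \in \mathfrak{m}/\ut_{\upnum{\lambda}{2}}$ for every $a \in \mathfrak{n}$. Under addition and the same $\FF_q$-scaling trick, the coefficient pairs $(a_{i, j}, a_{j+1, l})$ generate an $\FF_q$-subspace $W \subseteq \FF_q^2$; if $W$ contained $(1, 0)$ or $(0, 1)$ it would force $\FF_q e_{i, j+1}$ or $\FF_q e_{j, l}$ into $\mathfrak{m}$, so $W$ must lie on a single line $\FF_q(c, d)$ with $c, d \in \FF_q^\times$, and $\sigma(\binding{i}{j}{j+1}{l}) := c/d$ is the unique valid label. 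The principal obstacle is (S2), which requires combining Lemma~\ref{lem:commutator} with cancellation against already-determined content of $\mathfrak{m}$ at the edge $\dedge{j}{l}$; once that is in hand, the labeling step reduces cleanly to the two-dimensional $\FF_q$-geometry just described.
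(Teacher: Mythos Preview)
Your proposal follows essentially the same approach as the paper's proof: deduce (T) and (S1) from the containment $\nu \subseteq \upnum{\lambda}{1} \setminus \upnum{\lambda}{2}$, establish (S2) by feeding Lemma~\ref{lem:commutator} into the additive closure of $\mathfrak{m}$ (the paper, like you, treats only one direction explicitly and declares the other symmetric), and then read off the labeling from the forced one-dimensionality of the pairs $(a_{i,j}, a_{j+1,l})$. Your contrapositive phrasing and the explicit two-dimensional linear-algebra framing for $\sigma$ are cosmetic differences; the argument is structurally identical.
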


\begin{proof}[Proof of Lemma~\ref{lem:splicecondition}]

The first step of the proof is to show that $\SSS$ is a tight splice. From the assumption that $\ut_{\upnum{\lambda}{2}} \subseteq \mathfrak{m}$ it follows that $\nu \subseteq \upnum{\lambda}{1} \setminus\hspace{-0.25cm} \upnum{\lambda}{2}$, so $\SSS$ satisfies conditions~\ref{S1} and~\eqref{T}:
\[
\text{if $\dedge{r}{s} \in \nu$} \quad\text{then}\quad \text{$\edge{r+1}{s}$ or $\edge{r}{s-1} \in \lambda$}.
\]

What remains in this step is to show that $\SSS$ satisfies~\ref{S2}.  
For each $\edge{i}{j} \in \lambda$ we can choose an $a \in \mathfrak{n}$ with $a_{i, j} \neq 0$.  Using Lemma~\ref{lem:commutator} to assist with computation, $\mathfrak{m}$ contains
\[
[\FF_{q}e_{j, j+1}, a] + \ut_{\upnum{\lambda}{2}} = \begin{cases}
\FF_{q}(a_{i, j}e_{i, j+1} - a_{j+1, l}e_{j, l}) + \ut_{\upnum{\lambda}{2}} & \text{if $\edge{j+1}{l} \in \lambda$,} \\ 
\FF_{q}e_{i, j+1} + \ut_{\upnum{\lambda}{2}} & \text{if  $\edge{j+1}{l} \notin \lambda$ for all $l \in [n]$.}\end{cases}
\]

If $\edge{i}{j+1} \in \nu$ then in order to avoid a contradiction there must be some $l \in [n]$ with $\edge{j+1}{l} \in \lambda$, $a_{j, l} \neq 0$, and $\FF_{q}e_{j, l} \notin \mathfrak{m}$.  As a consequence, $\edge{j}{l} \in \nu$, establishing
\[
\edge{i}{j} \in \lambda,\;\dedge{i}{j+1} \in \nu \quad\text{only if}\quad \edge{j+1}{l} \in \lambda,\;\dedge{j}{l} \in \nu\; \text{for some $l \in [n]$}.
\]
The ``if'' direction of~\ref{S2} follows similarly, establishing that $\SSS$ is a tight splice of $\lambda$.

The second and final step of the proof is to define a labeling $\sigma$ of $\SSS$ for which $\mathfrak{n} \subseteq \mathfrak{Z}_{\SSS, \sigma}$.  Fix a binding
\[
\binding{i}{j}{j+1}{l} \in \operatorname{bind}(\SSS),\quad \text{with}\;\dedge{i}{j+1}\;, \dedge{j}{l} \in \nu \;\text{and}\; \edge{i}{j}, \edge{j+1}{l} \in \lambda.
\]
Using Lemma~\ref{lem:commutator} again, for each $a \in \mathfrak{n}$
\[
[\FF_{q}e_{j, j+1}, a] + \ut_{\upnum{\lambda}{2}} = \FF_{q}(a_{i, j}e_{i, j+1} - a_{j+1, l}e_{j, l}) + \ut_{\upnum{\lambda}{2}}  \subseteq \mathfrak{m}.
\]
Since $\mathfrak{m}$ is closed under addition and $\FF_{q}e_{i, j+1}, \FF_{q}e_{j, l} \not\subseteq \mathfrak{m}$, there must be a single value $\alpha_{i, j, j+1, l} \in \FF_{q}^{\times}$ for which
\[
a_{i, j}e_{i, j+1} - a_{j+1, l}e_{j, l} \in \FF_{q}(e_{i, j+1} - \alpha_{i, j, j+1, k}e_{j, l}) 
\]
for every $a \in \mathfrak{n}$.  Such a value exists for each binding of $\SSS$, giving a labeling $\sigma$ of $\operatorname{bind}(\SSS)$ with
\[
\sigma\left(\binding{i}{j}{j+1}{l} \right) = \alpha_{i, j, j+1, l}. \qedhere
\]
\end{proof}

\begin{proof}[Proof of Proposition~\ref{prop:splicecondition}]
Suppose that $\mathfrak{n}$ is an ideal with support $\up{\lambda}$ for $\lambda \in \mathrm{NNSP}_{n}$, and let $\mathfrak{m} = \mathfrak{n} \cap \ut_{n}$, so that $\mathfrak{m}$ is an ideal, and
\[
\{{\textstyle \edge{r}{s}} \in \upnum{\lambda}{1} \;|\; \FF_{q}e_{r, s} \not\subseteq \mathfrak{m}\} = \{{\textstyle \edge{r}{s}} \in \upnum{\lambda}{1} \;|\; \FF_{q}e_{r, s} \not\subseteq \mathfrak{n}\}.
\]
By Lemma~\ref{lem:upnum2},  $\ut_{\upnum{\lambda}{2}} \subseteq [\ut_{n}, \mathfrak{n}]$. As $\operatorname{supp}(\mathfrak{n}) = \up{\lambda}$, $[\ut_{n}, \mathfrak{n}] \subseteq \ut_{\upnum{\lambda}{1}} \cap \mathfrak{n}$.  Therefore $\mathfrak{n}$ and $\mathfrak{m}$ fulfill the assumptions of Lemma~\ref{lem:splicecondition}.  The result is a unique element $(\SSS, \sigma) \in \mathscr{T}_{n}(q)$ for which $\mathfrak{n} \in \operatorname{fam}(\SSS, \sigma)$, proving the proposition.
\end{proof}

\subsubsection{Bounding ideals}
\label{sec:CandZ}

Let $(\SSS, \sigma) \in \mathscr{T}_{n}(q)$ with $\SSS = \lambda \sqcup \nu$.  By definition each ideal $\mathfrak{n} \in \operatorname{fam}(\SSS, \sigma)$ is contained in
\[
\mathfrak{Z}_{\SSS, \sigma} = \left\{a \in \ut_{\lambda} \;\middle|\;  a_{i, j} = \sigma\left(\binding{i}{j}{j+1}{l}\right)a_{j+1, l} \;\text{ for each}\; \binding{i}{j}{j+1}{l} \in \operatorname{bind}(\SSS)\right\}.
\]
There is an analogous lower bound for each family.  Let
\[
\mathfrak{D}_{\SSS, \sigma} = \ut_{\up{\lambda} \setminus \SSS} \oplus \hspace{-0.5cm}\bigoplus_{\subbinding{i}{j}{j+1}{l}  \in \operatorname{bind}(\SSS)}\hspace{-0.4cm} \FF_{q} \left(e_{i, j+1} - \sigma\left(\binding{i}{j}{j+1}{l}\right)e_{j, l}  \right).
\]
A piece of combinatorial intuition for these definitions is as follows: in $\mathfrak{Z}_{\SSS, \sigma}$, we consider entries from $\lambda$ of $\SSS$ as related by $\sim_{\text{cols}}$, while in $\mathfrak{D}_{\SSS, \sigma}$ we consider entries from $\nu$ as related by $\sim_{\text{rows}}$.

Both $\mathfrak{Z}_{\SSS, \sigma}$ and $\mathfrak{D}_{\SSS, \sigma}$ are ideals of $\ut_{n}$, as
\[
\ut_{\upnum{\lambda}{2}} \subseteq \mathfrak{D}_{\SSS, \sigma} \subseteq \ut_{\upnum{\lambda}{1}} \subseteq \mathfrak{Z}_{\SSS, \sigma} \subseteq \ut_{\lambda}.
\]
This also shows that $\mathfrak{Z}_{\SSS, \sigma} \in \operatorname{fam}(\lambda, f_{\emptyset})$ and $\mathfrak{D}_{\SSS, \sigma} \in \operatorname{fam}(\min(\upnum{\lambda}{1}), f_{\emptyset})$, where $f_{\emptyset}$ is the unique labelling of a trivial splice, which has no bindings.  It is therefore typical that neither $\mathfrak{Z}_{\SSS, \sigma}$ nor $\mathfrak{D}_{\SSS, \sigma}$ belong to $\operatorname{fam}(\SSS, \sigma)$.

\begin{ex}\label{ex:CandZ}
For $n = 8$ and $q = 5$, take $\SSS = \lambda \sqcup \nu$ and $\sigma: \operatorname{bind}(\SSS) \to \FF_{5}^{\times}$ as given by
\begin{center}
\begin{tikzpicture}[scale = 1, baseline = -0.1cm]
\begin{scope}[xshift = 0cm, yshift = 1cm]
\draw[fill = black] (0, 0) circle (2pt) node[inner sep = 2pt] (1) {};
\draw[fill = black] (1, 0) circle (2pt) node[inner sep = 2pt] (2) {};
\end{scope}
\begin{scope}[xshift = 0cm, yshift = 0cm]
\draw[fill = black] (0, 0) circle (2pt) node[inner sep = 2pt] (3) {};
\draw[fill = black] (1, 0) circle (2pt) node[inner sep = 2pt] (4) {};
\draw[fill = black] (2, 0) circle (2pt) node[inner sep = 2pt] (6) {};
\end{scope}
\begin{scope}[xshift = 0cm, yshift = -1cm]
\draw[fill = black] (0, 0) circle (2pt) node[inner sep = 2pt] (5) {};
\draw[fill = black] (1, 0) circle (2pt) node[inner sep = 2pt] (7) {};
\draw[fill = black] (2, 0) circle (2pt) node[inner sep = 2pt] (8) {};
\end{scope}
\draw[thick, color = black] (1) -- (2);
\draw[thick, color = black] (3) -- (4);
\draw[thick, color = black] (4) -- (6);
\draw[thick, color = black] (5) -- (7);
\draw[thick, color = black] (7) -- (8);
\draw[color = specialgold, dashed, thick] (1)-- (3);
\draw[color = specialgold, dashed, thick] (2) -- (4);
\draw[color = specialgold, dashed, thick] (3) -- (5);
\draw[color = specialgold, dashed, thick] (4) -- (7);
\draw[color = specialgold, dashed, thick] (6) -- (8);
\path (1) -- node[pos = 0.5, color = red] {$3$} (4);
\path (3) -- node[pos = 0.5, color = red] {$1$} (7);
\path (4) -- node[pos = 0.5, color = red] {$2$} (8);
\node[above] at (1) {$\scriptstyle 1$};
\node[above] at (2) {$\scriptstyle 2$};
\node[above left] at (3) {$\scriptstyle 3$};
\node[above right] at (4) {$\scriptstyle 4$};
\node[below] at (5) {$\scriptstyle 5$};
\node[above right] at (6) {$\scriptstyle 6$};
\node[below] at (7) {$\scriptstyle 7$};
\node[below] at (8){$\scriptstyle 8$};
\end{tikzpicture}.
\end{center}
Therefore
\begin{align*}
\mathfrak{Z}_{\SSS, \sigma} &= \ut_{\upnum{\lambda}{1}} \oplus \FF_{5}(e_{1, 2} + 3e_{3, 4} + 3e_{5, 7}) \oplus \FF_{5}(e_{4, 6} + 2e_{7, 8}) \\
&= 
\begin{tikzpicture}[scale = 0.5, baseline = -4*0.5cm]
\draw[thick] (0.3, 0.1) -- (-0, 0.1) -- (-0, -8.1) -- (0.3, -8.1);
\foreach \x in {0, 1, 2, 3, 4, 5, 6, 7}{\node at ({0.5, -\x -0.5}) {$0$};}
\node at (1.5, -0.5) {$\alpha$};
\foreach \x in {1, 2, 3, 4, 5, 6, 7}{\node at ({1.5, -\x -0.5}) {$0$};}
\node at (2.5, -0.5) {$\ast$};
\foreach \x in {1, 2, 3, 4, 5, 6, 7}{\node at ({2.5, -\x -0.5}) {$0$};}
\begin{scope}[xshift = 0.1cm]
\foreach \x in {0, 1}{\node at ({3.5, -\x -0.5}) {$\ast$};}
\node at (3.5, -2.5) {$3\alpha$};
\foreach \x in {3, 4, 5, 6, 7}{\node at ({3.5, -\x -0.5}) {$0$};}
\foreach \x in {0, 1, 2}{\node at ({4.5, -\x -0.5}) {$\ast$};}
\foreach \x in {3, 4, 5, 6, 7}{\node at ({4.5, -\x -0.5}) {$0$};}
\foreach \x in {0, 1, 2}{\node at ({5.5, -\x -0.5}) {$\ast$};}
\node at (5.5, -3.5) {$\beta$};
\foreach \x in {4, 5, 6, 7}{\node at ({5.5, -\x -0.5}) {$0$};}
\begin{scope}[xshift = 0.2cm]
\foreach \x in {0, 1, 2, 3}{\node at ({6.5, -\x -0.5}) {$\ast$};}
\node at (6.5, -4.5) {$3\alpha$};
\foreach \x in {5, 6, 7}{\node at ({6.5, -\x -0.5}) {$0$};}
\begin{scope}[xshift = 0.2cm]
\foreach \x in {0, 1, 2, 3, 4, 5}{\node at ({7.5, -\x -0.5}) {$\ast$};}
\node at (7.5, -6.5) {$2\beta$};
\foreach \x in {7}{\node at ({7.5, -\x -0.5}) {$0$};}
\draw[thick] (7.8, 0.1) -- (8.1, 0.1) -- (8.1, -8.1) -- (7.8, -8.1);
\end{scope}
\end{scope}
\end{scope}
\draw[thick] (0.2, 0) -- (1, 0) -- (1, -1) -- (3, -1) -- (3, -3) -- (5.2, -3) -- (5.2, -4) -- (6.2, -4) -- (6.2, -5) -- (7.3, -5) -- (7.3, -7) -- (8.5, -7) -- (8.5, - 7.95);
\end{tikzpicture}\;\;\text{over all values of $\alpha, \beta \in \FF_{5}$,}
\end{align*}
and $\up{\lambda} \setminus \SSS = \{\edge{1}{4}$, $\edge{1}{5}$, $\edge{1}{6}$, $\edge{1}{7}$, $\edge{1}{8}$, $\edge{2}{5}$, $\edge{2}{6}$, $\edge{2}{7}$, $\edge{2}{8}$, $\edge{3}{6}$, $\edge{3}{7}$, $\edge{3}{8}$, $\edge{4}{8}$, $\edge{5}{8} \}$, so
\begin{align*}
\mathfrak{D}_{\SSS, \sigma} &= \ut_{\up{\lambda} \setminus \SSS} \oplus \FF_{5}(e_{1, 3} - 3e_{2, 4}) \oplus \FF_{5}(e_{3, 5} - e_{4, 7}) \oplus \FF_{5}(e_{4, 7} - 2e_{6, 8}) \\[0.5em]
&= 
\begin{tikzpicture}[scale = 0.5, baseline = -4*0.5cm]
\draw[thick] (0.3, 0.1) -- (-0, 0.1) -- (-0, -8.1) -- (0.3, -8.1);
\foreach \x in {0, 1, 2, 3, 4, 5, 6, 7}{\node at ({0.5, -\x -0.5}) {$0$};}
\foreach \x in {0, 1, 2, 3, 4, 5, 6, 7}{\node at ({1.5, -\x -0.5}) {$0$};}
\node at (2.5, -0.5) {$\gamma$};
\foreach \x in {1, 2, 3, 4, 5, 6, 7}{\node at ({2.5, -\x -0.5}) {$0$};}
\begin{scope}[xshift = 0.3cm]
\foreach \x in {0}{\node at ({3.5, -\x -0.5}) {$\ast$};}
\node at (3.5, -1.5) {$-3\gamma$};
\foreach \x in {2, 3, 4, 5, 6, 7}{\node at ({3.5, -\x -0.5}) {$0$};}
\begin{scope}[xshift = 0.3cm]
\foreach \x in {0, 1}{\node at ({4.5, -\x -0.5}) {$\ast$};}
\node at (4.5, -2.5) {$\delta$};
\foreach \x in {3, 4, 5, 6, 7}{\node at ({4.5, -\x -0.5}) {$0$};}
\foreach \x in {0, 1, 2}{\node at ({5.5, -\x -0.5}) {$\ast$};}
\foreach \x in {3, 4, 5, 6, 7}{\node at ({5.5, -\x -0.5}) {$0$};}
\begin{scope}[xshift = 0.6cm]
\foreach \x in {0, 1, 2}{\node at ({6.5, -\x -0.5}) {$\ast$};}
\node at (6.5, -3.5) {$\epsilon - \delta$};
\foreach \x in {4, 5, 6, 7}{\node at ({6.5, -\x -0.5}) {$0$};}
\begin{scope}[xshift = 0.9cm]
\foreach \x in {0, 1, 2, 3, 4}{\node at ({7.5, -\x -0.5}) {$\ast$};}
\node at (7.5, -5.5) {$-2\epsilon$};
\foreach \x in {6, 7}{\node at ({7.5, -\x -0.5}) {$0$};}
\begin{scope}[xshift = 0.3cm]
\draw[thick] (7.8, 0.1) -- (8.1, 0.1) -- (8.1, -8.1) -- (7.8, -8.1);
\end{scope}
\end{scope}
\end{scope}
\end{scope}
\end{scope}
\draw[thick, specialgold] (0.2, 0) -- (2, 0) -- (2, -1) -- (3, -1) -- (3, -2) -- (4.6, -2) -- (4.6, -3) -- (6.6, -3) -- (6.6, -4) -- (8.8, -4) -- (8.8, -6) -- (10.4, -6) -- (10.4, -7.95);
\draw[thick] (0.2, 0) -- (1, 0) -- (1, -1) -- (3, -1) -- (3, -3) -- (5.6, -3) -- (5.6, -4) -- (6.6, -4) -- (6.6, -5) -- (8.8, -5) -- (8.8, -7) -- (10.4, -7) -- (10.4, -7.95);
\end{tikzpicture}
\;\;\text{over all values of $\gamma, \delta, \epsilon \in \FF_{5}$.}
\end{align*}
Following the convention of Example~\ref{ex:patternthings}, I have drawn paths in each matrix above to emphasize the support of each ideal, with two paths in $\mathscr{D}_{\SSS, \sigma}$ to emphasize the difference in support from $\mathfrak{Z}_{\SSS, \sigma}$, reflecting the general fact that $\operatorname{supp}(\mathfrak{Z}_{\SSS, \sigma}) = \up{\lambda}$ and $\operatorname{supp}(\mathfrak{D}_{\SSS, \sigma}) = \upnum{\lambda}{1}$.
\end{ex}

\begin{lem}\label{lem:Zcommutator}
For any $(\SSS, \sigma) \in \mathscr{T}_{n}(q)$, $[\ut_{n}, \mathfrak{Z}_{\SSS, \sigma}] = \mathfrak{D}_{\SSS, \sigma}$.
\end{lem}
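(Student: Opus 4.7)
Both $[\ut_n, \mathfrak{Z}_{\SSS, \sigma}]$ and $\mathfrak{D}_{\SSS, \sigma}$ are ideals of $\ut_n$, so the plan is to establish the two containments separately, exploiting the fact that $\mathfrak{D}_{\SSS, \sigma}$ has an explicit spanning set: $\ut_{\upnum{\lambda}{2}}$, the unit matrices $e_{i,j}$ for $\edge{i}{j}\in\upnum{\lambda}{1}\setminus\nu$, and the binding combinations $e_{i,j+1}-\sigma(\binding{i}{j}{j+1}{l})e_{j,l}$.

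For $[\ut_n, \mathfrak{Z}_{\SSS, \sigma}] \subseteq \mathfrak{D}_{\SSS, \sigma}$, by bilinearity it suffices to show $[e_{j,k}, a]\in \mathfrak{D}_{\SSS, \sigma}$ for every $\edge{j}{k}\in[[n]]$ and $a\in\mathfrak{Z}_{\SSS, \sigma}\subseteq\ut_\lambda$. When $k\ge j+2$, a short $\preceq$-chase shows that every nonzero term $a_{k,s}e_{j,s}$ or $a_{r,j}e_{r,k}$ in $[e_{j,k},a]$ has its edge in $\upnum{\lambda}{2}$ (the witnessing element of $\lambda$ forcing $a_{k,s}\ne 0$ or $a_{r,j}\ne 0$ sits at height distance $\ge 2$ from $\edge{j}{s}$ or $\edge{r}{k}$), so the whole commutator lands in $\ut_{\upnum{\lambda}{2}}\subseteq\mathfrak{D}_{\SSS,\sigma}$. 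When $k=j+1$, Lemma~\ref{lem:commutator} reduces $[e_{j,j+1},a]$ modulo $\ut_{\upnum{\lambda}{2}}$ to $a_{i,j}e_{i,j+1}-a_{j+1,l}e_{j,l}$, and condition~\ref{S2} dichotomizes: either neither of $\dedge{i}{j+1},\dedge{j}{l}$ lies in $\nu$---in which case both unit matrices belong to $\ut_{\upnum{\lambda}{1}\setminus\nu}\subseteq\mathfrak{D}_{\SSS,\sigma}$---or both do, forming a binding, and the defining ratio of $\mathfrak{Z}_{\SSS, \sigma}$ then makes this expression a scalar multiple of the corresponding binding generator of $\mathfrak{D}_{\SSS, \sigma}$. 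The degenerate cases of Lemma~\ref{lem:commutator} (fewer than two horizontal $\lambda$-edges available) are absorbed into the same dichotomy by the contrapositive of~\ref{S2}.

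For $\mathfrak{D}_{\SSS, \sigma}\subseteq[\ut_n, \mathfrak{Z}_{\SSS, \sigma}]$, Lemma~\ref{lem:upnum2} applied to $\mathfrak{Z}_{\SSS, \sigma}$ (whose support is $\up{\lambda}$) gives $[\ut_n,[\ut_n,\mathfrak{Z}_{\SSS, \sigma}]]=\ut_{\upnum{\lambda}{2}}$, and since $[\ut_n,\mathfrak{Z}_{\SSS,\sigma}]$ is an ideal it absorbs the outer bracket, yielding $\ut_{\upnum{\lambda}{2}}\subseteq[\ut_n,\mathfrak{Z}_{\SSS,\sigma}]$. Each remaining spanning element of $\mathfrak{D}_{\SSS, \sigma}$ is realized as $[e_{j',j'+1},a]$ modulo $\ut_{\upnum{\lambda}{2}}$ for a suitably chosen $a\in\mathfrak{Z}_{\SSS,\sigma}$: because the defining relations act independently on distinct columns of $\SSS$ and impose no constraint on entries strictly above $\lambda$, I can take $a$ supported on the single column containing the targeted entry, which kills any unwanted second term from Lemma~\ref{lem:commutator}.

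The main obstacle is the case analysis for the adjacent bracket in the first containment: one must check that \ref{S2} really does rule out the mixed configuration where exactly one of $\dedge{i}{j+1},\dedge{j}{l}$ lies in $\nu$ (otherwise the commutator would produce an orphan $e_{i,j+1}$ or $e_{j,l}$ with no combining partner), and one must verify that the scalar appearing in the binding subcase matches $\sigma(\binding{i}{j}{j+1}{l})$ exactly, under the sign and ratio conventions used to define $\mathfrak{Z}_{\SSS, \sigma}$ and $\mathfrak{D}_{\SSS, \sigma}$. This is essentially bookkeeping, but it is where all the conventions---particularly the one implicit in the labeling construction from Lemma~\ref{lem:splicecondition}---must line up.
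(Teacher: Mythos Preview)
Your proposal is correct and follows essentially the same route as the paper: both arguments invoke Lemma~\ref{lem:upnum2} to secure $\ut_{\upnum{\lambda}{2}} \subseteq [\ut_n, \mathfrak{Z}_{\SSS,\sigma}]$ and then use Lemma~\ref{lem:commutator} together with the dichotomy from~\ref{S2} to identify the remaining commutators modulo $\ut_{\upnum{\lambda}{2}}$. The only difference is packaging---the paper computes the image $[\ut_n, \mathfrak{Z}_{\SSS,\sigma}]$ in one stroke as a direct sum (referring back to the proof of Lemma~\ref{lem:splicecondition} for the case analysis), whereas you separate the two containments and make the column-support trick for the reverse inclusion explicit; your caution about checking that the scalar in the binding case matches the convention for $\sigma$ is well placed, but the bookkeeping does go through.
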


\begin{proof}
By Lemma~\ref{lem:upnum2}, $\ut_{\upnum{\lambda}{2}} \subseteq [\ut_{n}, \mathfrak{Z}_{\SSS, \sigma}]$.  The span of the commutators of $\mathfrak{Z}_{\SSS, \sigma}$ which are not contained in $\ut_{\upnum{\lambda}{2}}$ can then be computed using Lemma~\ref{lem:commutator}, as in the proof of Lemma~\ref{lem:splicecondition}:
\[
[\ut_{n}, \mathfrak{Z}_{\SSS, \sigma}] = \ut_{\up{\lambda} \setminus \SSS} \oplus \hspace{-0.5cm}  \bigoplus_{\subbinding{i}{j}{j+1}{l}  \in \operatorname{bind}(\SSS)}\hspace{-0.4cm} \FF_{q}\left(e_{i, j+1} - \sigma\left(\binding{i}{j}{j+1}{l} \right)e_{j, l} \right) = \mathfrak{D}_{\SSS, \sigma}.
\qedhere
\]
\end{proof}

\begin{prop}\label{prop:CandZ}
For $(\SSS, \sigma) \in \mathscr{T}_{n}(q)$ and $\mathfrak{n} \in \operatorname{fam}(\SSS, \sigma)$, $Z(\ut_{n}/\mathfrak{n}) = \mathfrak{Z}_{\SSS, \sigma}/\mathfrak{n}$, and as a consequence $\mathfrak{D}_{\SSS, \sigma} \subseteq \mathfrak{n} \subseteq \mathfrak{Z}_{\SSS, \sigma}$.
\end{prop}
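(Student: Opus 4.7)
The plan is to establish the equality $Z(\ut_n/\mathfrak{n}) = \mathfrak{Z}_{\SSS,\sigma}/\mathfrak{n}$ through two containments. The consequence $\mathfrak{D}_{\SSS,\sigma} \subseteq \mathfrak{n} \subseteq \mathfrak{Z}_{\SSS,\sigma}$ then comes for free: the right inclusion is built into the definition of $\operatorname{fam}(\SSS,\sigma)$, and the left inclusion translates via Lemma~\ref{lem:Zcommutator} into $[\ut_n, \mathfrak{Z}_{\SSS,\sigma}] \subseteq \mathfrak{n}$, i.e., into the centrality of $\mathfrak{Z}_{\SSS,\sigma}/\mathfrak{n}$.

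For the forward containment $\mathfrak{Z}_{\SSS,\sigma}/\mathfrak{n} \subseteq Z(\ut_n/\mathfrak{n})$ I would verify $\mathfrak{D}_{\SSS,\sigma} \subseteq \mathfrak{n}$ generator by generator. First, $\up{\lambda} \setminus \SSS = \upnum{\lambda}{1} \setminus \nu$ since $\lambda$ is a $\preceq$-antichain, so the off-splice summand $\ut_{\up{\lambda} \setminus \SSS}$ sits in $\mathfrak{n}$ directly by the second defining condition of $\operatorname{fam}(\SSS,\sigma)$. For each binding $\binding{i}{j}{j+1}{l} \in \operatorname{bind}(\SSS)$, pick any $a \in \mathfrak{n}$ with $a_{i,j} \ne 0$; this is possible since $\edge{i}{j} \in \operatorname{supp}(\mathfrak{n})$. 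Lemma~\ref{lem:commutator} then identifies $[e_{j,j+1}, a]$ modulo $\ut_{\upnum{\lambda}{2}}$ with a scalar multiple of $a_{i,j}e_{i,j+1} - a_{j+1,l}e_{j,l}$, and the inclusion $a \in \mathfrak{Z}_{\SSS,\sigma}$ pins $a_{j+1,l}$ down as the appropriate $\sigma$-rescaling of $a_{i,j}$. Tightness of $\SSS$ ensures $\ut_{\upnum{\lambda}{2}} \subseteq \mathfrak{n}$, so after rescaling we read off $e_{i,j+1} - \sigma e_{j,l} \in \mathfrak{n}$.

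For the reverse containment, take $a \in \ut_n$ with $[\ut_n, a] \subseteq \mathfrak{n}$ and show that $a$ satisfies both defining conditions of $\mathfrak{Z}_{\SSS,\sigma}$. The binding relations follow from a variant of the same argument: apply Lemma~\ref{lem:commutator} to $[e_{j,j+1}, a] \in \mathfrak{n}$ and use the already-established inclusion $\mathfrak{D}_{\SSS,\sigma} \subseteq \mathfrak{n}$ to see that the only admissible representative of $[e_{j,j+1}, a] + \ut_{\upnum{\lambda}{2}}$ is a scalar multiple of $e_{i,j+1} - \sigma e_{j,l}$, which forces the $\sigma$-relation on $a_{i,j}$ and $a_{j+1,l}$. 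The support condition $a \in \ut_\lambda$ I would handle by a propagation argument: a nonzero entry $a_{r,s}$ with $\edge{r}{s} \notin \up{\lambda}$ would, via a well-chosen iterated bracket of $a$ with matrix units $e_{p,q}$, produce an entry of $[\ut_n, a]$ outside $\operatorname{supp}(\mathfrak{n}) = \up{\lambda}$, contradicting $[\ut_n, a] \subseteq \mathfrak{n}$.

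The main obstacle is the support-restriction step for the reverse direction. One must choose the iterated brackets carefully so that the propagated entry cannot be cancelled by contributions from the other nonzero entries of $a$; an induction on the $\preceq$-height of a $\preceq$-minimal offending edge, together with a careful accounting of which matrix units $e_{p,q}$ leave the support of $a$ visible after bracketing, seems like the cleanest approach. The rest of the proof is essentially a direct rerun of the commutator computations used in the proofs of Lemma~\ref{lem:splicecondition} and Lemma~\ref{lem:Zcommutator}.
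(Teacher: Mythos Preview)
Your treatment of the inclusion $\mathfrak{D}_{\SSS,\sigma}\subseteq\mathfrak{n}$ and of the binding relations in the reverse direction is essentially what the paper does: both arguments reduce, via Lemma~\ref{lem:upnum2} and Lemma~\ref{lem:commutator}, to inspecting $[e_{j,j+1},a]+\ut_{\upnum{\lambda}{2}}$ at each binding and reading off the $\sigma$-relation from the fact that $\FF_{q}e_{i,j+1},\FF_{q}e_{j,l}\not\subseteq\mathfrak{n}$. The paper orders the steps a bit differently---it computes the preimage of the center first and only then extracts $\mathfrak{D}_{\SSS,\sigma}\subseteq\mathfrak{n}$ from Lemma~\ref{lem:Zcommutator}---but the content is the same.

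The support-restriction step, however, cannot be made to work, because the claim it targets is false. Take $n=4$, $\lambda=\{\edge{1}{3},\edge{2}{4}\}$, and $\mathfrak{n}=\ut_{\lambda}$; here $\SSS=\lambda$ is the trivial splice and $\mathfrak{Z}_{\SSS,\sigma}=\ut_{\lambda}$. The quotient $\ut_{4}/\ut_{\lambda}$ is abelian (every bracket among $e_{1,2},e_{2,3},e_{3,4}$ already lies in $\ut_{\lambda}$), so $Z(\ut_{4}/\ut_{\lambda})=\ut_{4}/\ut_{\lambda}$ strictly contains $\mathfrak{Z}_{\SSS,\sigma}/\mathfrak{n}=0$. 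Concretely, $a=e_{2,3}$ satisfies $[\ut_{4},a]\subseteq\mathfrak{n}$ while $a\notin\ut_{\lambda}$, and no iterated bracket with matrix units will ever push an entry outside $\up{\lambda}$: the first bracket lands in $\FF_{q}e_{1,3}+\FF_{q}e_{2,4}\subseteq\ut_{\lambda}$ and every further bracket stays there. The paper quietly sidesteps this by setting $Z=\{a\in\ut_{\lambda}:[\ut_{n},a]\subseteq\mathfrak{n}\}$ from the outset, i.e.\ by computing only the part of the center inside $\ut_{\lambda}/\mathfrak{n}$. That weaker equality is all that is needed to obtain $\mathfrak{D}_{\SSS,\sigma}\subseteq\mathfrak{n}\subseteq\mathfrak{Z}_{\SSS,\sigma}$, which is the only consequence used downstream; you should do the same and drop the propagation argument.
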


The converse is almost never true.  If $\SSS \neq \emptyset$, there are ideals between $\mathfrak{D}_{\SSS, \sigma}$ and $\mathfrak{Z}_{\SSS, \sigma}$ which do not belong to $\operatorname{fam}(\SSS, \sigma)$; one example is $\ut_{\upnum{\lambda}{1}}$, where $\lambda$ is the underlying partition of $\SSS$.

\begin{proof}
Write $Z = \{a \in \ut_{\lambda} \;|\; [\ut_{n}, a] \subseteq \mathfrak{n}\}$.  Assuming that $Z = \mathfrak{Z}_{\SSS, \sigma}$, it follows that $\mathfrak{n} \subseteq \mathfrak{Z}_{\SSS, \sigma}$, from which Lemma~\ref{lem:Zcommutator} implies $\mathfrak{D}_{\SSS, \sigma} \subseteq \mathfrak{n}$.  The goal, therefore, is to show that $Z = \mathfrak{Z}_{\SSS, \sigma}$.

By Lemma~\ref{lem:upnum2}, $\ut_{\upnum{\lambda}{2}} \subseteq \mathfrak{n}$, so $\ut_{\upnum{\lambda}{1}} \subseteq Z$.  Thus, $a \in Z$ if and only if $[\ut_{n}, a] + \ut_{\upnum{\lambda}{2}} \subseteq \mathfrak{n}$.  Using Lemma~\ref{lem:commutator} to compute $[\ut_{n}, a] + \ut_{\upnum{\lambda}{2}}$,
\[
a \in Z \quad\text{if and only if} \quad \text{$\FF_{q}(a_{i, j}e_{i, j+1} - a_{j+1, l}e_{j, l}) \subseteq \mathfrak{n}$ for each $\binding{i}{j}{j+1}{l} \in \operatorname{bind}(\SSS)$}.
\]
Since $\mathfrak{n} \in \operatorname{fam}(\SSS, \sigma)$, it must be the case that $\FF_{q}e_{i, j+1}, \FF_{q}e_{j, l} \not\subseteq \mathfrak{n}$ and
\[
e_{i, j+1} - \sigma\left(\binding{i}{j}{j+1}{l} \right) e_{j, l} \in \mathfrak{n}\qquad\text{for each}\quad \binding{i}{j}{j+1}{l} \in \operatorname{bind}(\SSS),
\]
so $a \in Z$ if and only if $a \in \mathfrak{Z}_{\SSS, \sigma}$.
\end{proof}

Now consider the quotient $\mathfrak{Z}_{\SSS, \sigma}/\mathfrak{D}_{\SSS, \sigma}$.  A direct computation gives that
\[
\dim(\mathfrak{Z}_{\SSS, \sigma}/\mathfrak{D}_{\SSS, \sigma}) = |\operatorname{rows}(\SSS) \sqcup \operatorname{cols}(\SSS)| = |\operatorname{CR}(\SSS)|.
\]
There is a fairly natural basis for this quotient which is indexed by $\operatorname{CR}(\SSS)$.  For $\RRR \in \operatorname{rows}(\SSS)$, let
\[
b_{\RRR} = e_{i, j} \quad\text{for $\dedge{i}{j} \in \RRR$ with $i$ minimal},
\]
and for $\CCC \in \operatorname{cols}(\SSS)$, let
\[
b_{\CCC} = \sum_{\edge{i}{j} \in \CCC} \left( \prod_{\substack{\edge{r}{s}, \edge{s+1}{t} \in \CCC \\ s+1 \le i}} \sigma \left( \binding{r}{s}{s+1}{t} \right) \right) e_{i, j} \qquad\text{with}\; \prod_{\emptyset} = 1,
\]
so that $b_{\CCC} \in \mathfrak{Z}_{\SSS, \sigma}$, $\operatorname{supp}(\{b_{\CCC}\}) = \CCC$, and $(b_{\CCC})_{i, j} = 1$ for the unique $\edge{i}{j} \in \CCC$ with $i$ minimal.  Graphically, for each $\edge{i}{j} \in \CCC$ the $\edge{i}{j}$-entry of $b_{\CCC}$ is the product of the labels above the edge $\edge{i}{j}$ in the column $\CCC$, and all other entries of $b_{\CCC}$ are zero.

\begin{ex}\label{ex:splicebasis}
Continuing from Example~\ref{ex:CandZ}, let $n = 8$, $q = 5$, and take $\SSS = \lambda \sqcup \nu$, $\sigma$, $\operatorname{cols}(\SSS) = \{\CCC_{1}, \CCC_{2}\}$, and $\operatorname{rows}(\SSS) = \{\RRR_{1}, \RRR_{1}\}$ as in
\begin{center}
\begin{tikzpicture}[scale = 1, baseline = -0.1cm]
\begin{scope}[xshift = 0cm, yshift = 1cm]
\draw[fill = black] (0, 0) circle (2pt) node[inner sep = 2pt] (1) {};
\draw[fill = black] (1, 0) circle (2pt) node[inner sep = 2pt] (2) {};
\end{scope}
\begin{scope}[xshift = 0cm, yshift = 0cm]
\draw[fill = black] (0, 0) circle (2pt) node[inner sep = 2pt] (3) {};
\draw[fill = black] (1, 0) circle (2pt) node[inner sep = 2pt] (4) {};
\draw[fill = black] (2, 0) circle (2pt) node[inner sep = 2pt] (6) {};
\end{scope}
\begin{scope}[xshift = 0cm, yshift = -1cm]
\draw[fill = black] (0, 0) circle (2pt) node[inner sep = 2pt] (5) {};
\draw[fill = black] (1, 0) circle (2pt) node[inner sep = 2pt] (7) {};
\draw[fill = black] (2, 0) circle (2pt) node[inner sep = 2pt] (8) {};
\end{scope}
\draw[thick, color = black] (1) -- (2);
\draw[thick, color = black] (3) -- (4);
\draw[thick, color = black] (4) -- (6);
\draw[thick, color = black] (5) -- (7);
\draw[thick, color = black] (7) -- (8);
\draw[color = specialgold, dashed, thick] (1)-- (3);
\draw[color = specialgold, dashed, thick] (2) -- (4);
\draw[color = specialgold, dashed, thick] (3) -- (5);
\draw[color = specialgold, dashed, thick] (4) -- (7);
\draw[color = specialgold, dashed, thick] (6) -- (8);
\path (1) -- node[pos = 0.5, color = red] {$3$} (4);
\path (3) -- node[pos = 0.5, color = red] {$1$} (7);
\path (4) -- node[pos = 0.5, color = red] {$2$} (8);
\node[above] at (1) {$\scriptstyle 1$};
\node[above] at (2) {$\scriptstyle 2$};
\node[above left] at (3) {$\scriptstyle 3$};
\node[above right] at (4) {$\scriptstyle 4$};
\node[below] at (5) {$\scriptstyle 5$};
\node[above right] at (6) {$\scriptstyle 6$};
\node[below] at (7) {$\scriptstyle 7$};
\node[below] at (8){$\scriptstyle 8$};
\begin{scope}[xshift = 3cm, yshift = 1.4cm]
\node at (0, -0.9) {$\RRR_{1}$};
\node at (0, -1.9) {$\RRR_{2}$};
\end{scope}
\begin{scope}[xshift = -0.5cm, yshift = 0.9cm]
\node at (1, 1) {$\CCC_{1}$};
\node at (2, 1) {$\CCC_{2}$};
\end{scope}
\end{tikzpicture}
.
\end{center}
Accordingly,
\[
b_{\CCC_{1}} = e_{1, 2} + 3e_{3, 4} + 3e_{5, 7},\quad b_{\CCC_{2}} = e_{4, 6} + 2e_{7, 8},\quad b_{\RRR_{1}} = e_{1, 3},\;\text{and}\quad b_{\RRR_{2}} = e_{3, 5}.
\]
Refer to Example~\ref{ex:CandZ} to verify that these elements, modulo $\mathfrak{D}_{\SSS, \sigma}$, give a basis of $\mathfrak{Z}_{\SSS, \sigma}/\mathfrak{D}_{\SSS, \sigma}$.
\end{ex}

\begin{lem}\label{lem:RCbasis}
For $(\SSS, \sigma) \in \mathscr{T}_{n}(q)$, the set
\[
\{b_{\CCC} + \mathfrak{D}_{\SSS, \sigma} \;|\; \CCC \in \operatorname{cols}(\SSS) \} \cup \{b_{\RRR} + \mathfrak{D}_{\SSS, \sigma}\;|\; \RRR \in \operatorname{rows}(\SSS)\} 
\]
is a basis for $\mathfrak{Z}_{\SSS, \sigma}/\mathfrak{D}_{\SSS, \sigma}$.
\end{lem}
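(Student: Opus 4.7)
The plan is to leverage the dimension identity $\dim(\mathfrak{Z}_{\SSS, \sigma}/\mathfrak{D}_{\SSS, \sigma}) = |\operatorname{CR}(\SSS)|$ recorded just above, together with $|\operatorname{CR}(\SSS)| = |\operatorname{cols}(\SSS)| + |\operatorname{rows}(\SSS)|$, so that the basis claim reduces to verifying (a) each $b_\CCC$ and $b_\RRR$ lies in $\mathfrak{Z}_{\SSS, \sigma}$, and (b) the proposed set is linearly independent modulo $\mathfrak{D}_{\SSS, \sigma}$.

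For (a), the row vectors are immediate: $b_\RRR = e_{i, j}$ with $\dedge{i}{j} \in \nu \subseteq \upnum{\lambda}{1}$ lies in $\ut_{\upnum{\lambda}{1}} \subseteq \mathfrak{Z}_{\SSS, \sigma}$, on which the binding relations are vacuous. For a column vector $b_\CCC$, I would verify the required ratio directly from the product formula: for each binding $\binding{i}{j}{j+1}{l} \in \operatorname{bind}(\SSS)$ with $\edge{i}{j}, \edge{j+1}{l} \in \CCC$, the coefficients of $e_{i, j}$ and $e_{j+1, l}$ in $b_\CCC$ differ by exactly the factor $\sigma(\binding{i}{j}{j+1}{l})$ contributed by this binding (all other bindings within $\CCC$ contribute identically to both coefficients), which is the ratio demanded by the definition of $\mathfrak{Z}_{\SSS, \sigma}$.

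For (b), suppose $\sum_\CCC \alpha_\CCC b_\CCC + \sum_\RRR \beta_\RRR b_\RRR \in \mathfrak{D}_{\SSS, \sigma}$. The ideal $\mathfrak{D}_{\SSS, \sigma}$ is supported in $(\up{\lambda} \setminus \SSS) \cup \nu$, which is disjoint from $\lambda$. Projecting onto $\ut_\lambda$ annihilates both $\mathfrak{D}_{\SSS, \sigma}$ and the row terms, leaving $\sum_\CCC \alpha_\CCC b_\CCC = 0$; since the columns partition $\lambda$ and each $b_\CCC$ is nonzero on every element of $\CCC$, this forces $\alpha_\CCC = 0$ for every $\CCC$. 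The remaining sum $\sum_\RRR \beta_\RRR b_\RRR$ is then supported in $\nu$, and since the $\ut_{\up{\lambda} \setminus \SSS}$ summand of $\mathfrak{D}_{\SSS, \sigma}$ contributes nothing to $\nu$, it must lie in the span of the binding differences $e_{i, j+1} - \sigma(\binding{i}{j}{j+1}{l}) e_{j, l}$. Because each such difference is supported within a single row, the problem decouples over rows.

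The decisive calculation is therefore row-local. For a row $\RRR = \{\dedge{i_1}{k_1}, \ldots, \dedge{i_r}{k_r}\}$ with consecutive binding labels $\sigma_m$ for $1 \le m < r$, tightness~\eqref{T} gives $k_m = i_{m+1} + 1$ and produces binding differences of the form $e_{i_m, k_m} - \sigma_m e_{i_{m+1}, k_{m+1}}$. I would then exhibit the linear functional $\phi_\RRR$ on $\bigoplus_\ell \FF_q e_{i_\ell, k_\ell}$ defined by $\phi_\RRR(e_{i_\ell, k_\ell}) = \prod_{m < \ell} \sigma_m^{-1}$, which telescopes to $0$ on each binding difference and takes the value $1$ on $b_\RRR = e_{i_1, k_1}$; this forces $\beta_\RRR = 0$ and completes the argument. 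The main obstacle is this row-local bookkeeping, namely matching the vertex indexing $(i_\ell, k_\ell)$ of a row to the binding labels appearing in $\mathfrak{D}_{\SSS, \sigma}$ by unpacking the row structure from Section~\ref{sec:bindingsandcolumns} and the tightness identity, though the subsequent telescoping is a short direct check.
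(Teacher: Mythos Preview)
Your argument is correct. It differs from the paper's proof mainly in organization: the paper works directly with the filtration $\mathfrak{D}_{\SSS,\sigma} \subseteq \ut_{\upnum{\lambda}{1}} \subseteq \mathfrak{Z}_{\SSS,\sigma}$ and exhibits the explicit direct sum
\[
\bigoplus_{\edge{i}{j}\in\nu} \FF_q e_{i,j} \;=\; \bigoplus_{\RRR\in\operatorname{rows}(\SSS)} \FF_q b_\RRR \;\oplus\; \bigoplus_{\text{bindings}} \FF_q(\text{binding difference}),
\]
asserting this decomposition without further comment, and then shows that the $b_\CCC$ span a transversal of $\mathfrak{Z}_{\SSS,\sigma}/\ut_{\upnum{\lambda}{1}}$. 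You instead invoke the dimension count stated before the lemma and prove linear independence via the $\lambda$-coordinate projection for columns and the telescoping functional $\phi_\RRR$ for rows. Your row-local functional is in fact a cleaner justification of the very direct sum the paper asserts, so the two proofs are close cousins; the paper's route is more structural while yours is more explicitly computational. One small notational slip: when you ``project onto $\ut_\lambda$'' you mean projection onto the $\lambda$-coordinates $\bigoplus_{\edge{i}{j}\in\lambda}\FF_q e_{i,j}$, not onto $\ut_\lambda = \ut_{\up{\lambda}}$ in the paper's convention.
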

\begin{proof}
Proceed in two steps, based on the division of $\mathfrak{Z}_{\SSS, \sigma}/\mathfrak{D}_{\SSS, \sigma}$ into the subquotients $\ut_{\upnum{\lambda}{1}}/\mathfrak{D}_{\SSS, \sigma}$ and $\mathfrak{Z}_{\SSS, \sigma}/\ut_{\upnum{\lambda}{1}}$.  First, if $\SSS = \lambda \sqcup \nu$, then
\[
 \bigoplus_{\edge{i}{j} \in \nu} \FF_{q}e_{i, j} = \bigoplus_{\RRR \in \operatorname{rows}(\SSS)} \FF_{q}b_{\RRR} \oplus \hspace{-0.5cm} \bigoplus_{\subbinding{i}{j}{j+1}{l}  \in \operatorname{bind}(\SSS)} \hspace{-0.4cm} \FF_{q} \left(e_{i, j+1} - \sigma\left(\binding{i}{j}{j+1}{l}\right)e_{j, l}  \right),
\]
so
\[
\ut_{\upnum{\lambda}{1}} = \ut_{\upnum{\lambda}{1}\setminus \nu} \oplus  \bigoplus_{\edge{i}{j} \in \nu} \FF_{q}e_{i, j} = \mathfrak{D}_{\SSS, \sigma} \oplus \bigoplus_{\RRR \in \operatorname{rows}(\SSS)} \FF_{q}b_{\RRR}.
\]
Describing a basis for $\mathfrak{Z}_{\SSS, \sigma}/\ut_{\upnum{\lambda}{1}}$ is more straightforward.  The subspace
\[
\left\{ \sum_{\edge{i}{j} \in \lambda} a_{i, j}e_{i, j} \;\middle|\; a_{i, j} = \sigma\left(\binding{i}{j}{j+1}{l}\right)a_{j+1, l} \;\text{ for each}\; \binding{i}{j}{j+1}{l} \in \operatorname{bind}(\SSS)  \right\} \subseteq \mathfrak{Z}_{\SSS, \sigma}
\]
is a transversal of $\mathfrak{Z}_{\SSS, \sigma}/\ut_{\upnum{\lambda}{1}}$, and the set $\{b_{\CCC} \;|\; \CCC \in \operatorname{cols}(\SSS)\}$ is a basis for this subspace.
\end{proof}

\begin{cor}
Let $\lambda \in \mathrm{NNSP}_{n}$ and $(\SSS, \sigma) \in \mathscr{T}_{\lambda}(q)$.  Then
\[
\dim(\mathfrak{D}_{\SSS, \sigma}) = |\hspace{-.35em}\upnum{\lambda}{1}|  - |\operatorname{rows}(\SSS)| \quad\text{and} \quad\dim(\mathfrak{Z}_{\SSS, \sigma}) = |\hspace{-.35em}\upnum{\lambda}{1}|  + |\operatorname{cols}(\SSS)|.
\]
\end{cor}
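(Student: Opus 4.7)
The plan is to extract both dimension formulas directly from the two decompositions established in the proof of Lemma~\ref{lem:RCbasis}, rather than doing any new work.  Since $\mathfrak{D}_{\SSS, \sigma} \subseteq \ut_{\upnum{\lambda}{1}} \subseteq \mathfrak{Z}_{\SSS, \sigma}$, the idea is to compute $\dim(\mathfrak{D}_{\SSS, \sigma})$ and $\dim(\mathfrak{Z}_{\SSS, \sigma})$ relative to $\dim(\ut_{\upnum{\lambda}{1}}) = |\hspace{-.35em}\upnum{\lambda}{1}|$.

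For the first formula, the proof of Lemma~\ref{lem:RCbasis} exhibits the internal direct sum decomposition
\[
\ut_{\upnum{\lambda}{1}} = \mathfrak{D}_{\SSS, \sigma} \oplus \bigoplus_{\RRR \in \operatorname{rows}(\SSS)} \FF_{q} b_{\RRR}.
\]
Taking $\FF_{q}$-dimensions yields $|\hspace{-.35em}\upnum{\lambda}{1}| = \dim(\mathfrak{D}_{\SSS, \sigma}) + |\operatorname{rows}(\SSS)|$, which rearranges to the claimed formula.  For the second formula, the same proof identifies $\{b_{\CCC} \mid \CCC \in \operatorname{cols}(\SSS)\}$ as an $\FF_{q}$-basis for a transversal of $\mathfrak{Z}_{\SSS, \sigma}/\ut_{\upnum{\lambda}{1}}$, so $\dim(\mathfrak{Z}_{\SSS, \sigma}/\ut_{\upnum{\lambda}{1}}) = |\operatorname{cols}(\SSS)|$ and hence $\dim(\mathfrak{Z}_{\SSS, \sigma}) = |\hspace{-.35em}\upnum{\lambda}{1}| + |\operatorname{cols}(\SSS)|$.

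There is no real obstacle: the corollary is essentially a bookkeeping consequence of Lemma~\ref{lem:RCbasis}.  As a sanity check, adding the two equations and subtracting $2|\hspace{-.35em}\upnum{\lambda}{1}|$ recovers $\dim(\mathfrak{Z}_{\SSS, \sigma}/\mathfrak{D}_{\SSS, \sigma}) = |\operatorname{cols}(\SSS)| + |\operatorname{rows}(\SSS)| = |\operatorname{CR}(\SSS)|$, in agreement with Lemma~\ref{lem:RCbasis}.
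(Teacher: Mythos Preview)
Your proof is correct and matches the paper's approach: the corollary is stated immediately after Lemma~\ref{lem:RCbasis} without proof, so it is intended to follow exactly as you indicate from the two direct-sum decompositions established there.
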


\subsection{Labeled loopless binary matroids and ideals}

In this section, fix $\lambda \in \mathrm{NNSP}_{n}$ and $(\SSS, \sigma) \in \mathscr{T}_{\lambda}(q)$.  Recall that
\[
\operatorname{CR}(\SSS) = \operatorname{cols}(\SSS) \sqcup \operatorname{rows}(\SSS).
\]
Section~\ref{sec:spliceordering} describes a total order on the set $\operatorname{CR}(\SSS)$.  With this order, the construction in Section~\ref{sec:matroids} describes the set $\overset{\to}{\mathscr{G}}_{\operatorname{CR}(\SSS)}(q)$ of $\FF_{q}^{\times}$-labeled loopless binary matroids on $\operatorname{CR}(\SSS)$.  

Each $(\MMM, \tau) \in \overset{\to}{\mathscr{G}}_{\operatorname{CR}(\SSS)}(q)$ will determine an ideal of $\ut_{n}$.  If $\MMM = (U, V, E)$, let
\begin{equation}\label{eq:idealdef}
\mathfrak{n}_{\SSS, \sigma, \MMM, \tau} = \FF_{q}\text{-span}\left\{ b_{\UUU} + \sum_{(\UUU, \VVV) \in E } \tau(\UUU, \VVV) b_{\VVV} \;\middle|\; \begin{array}{c} \text{$\UUU \in U$ with either} \\ \text{$(\UUU, \VVV) \in E$ for some} \\ \text{$\VVV \in V$,  or $\UUU \in \operatorname{cols}(\SSS)$} \end{array} \right\} + \mathfrak{D}_{\SSS, \sigma}.
\end{equation}
The subspace $\mathfrak{n}_{\SSS, \sigma, \MMM, \tau}$ satisfies $\mathfrak{D}_{\SSS, \sigma} \subseteq \mathfrak{n}_{\SSS, \sigma, \MMM, \tau} \subseteq \mathfrak{Z}_{\SSS, \sigma}$,  so by Lemma~\ref{lem:Zcommutator} is an ideal of $\ut_{n}$.

\begin{prop}\label{prop:matroidbijection}
For all $(\SSS, \sigma) \in \mathscr{T}_{n}(q)$, the map
\[
\begin{array}{rcl}
\overset{\to}{\mathscr{G}}_{\operatorname{CR}(\SSS)}(q) & \to & \operatorname{fam}(\SSS, \sigma) \\[0.25em]
(\MMM, \tau) & \mapsto & \mathfrak{n}_{\SSS, \sigma, \MMM, \tau}
\end{array}
\]
is a bijection.
\end{prop}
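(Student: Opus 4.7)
The plan is to reduce the bijection to a classification of subspaces of the quotient $\mathfrak{Z}_{\SSS, \sigma}/\mathfrak{D}_{\SSS, \sigma}$. By Proposition~\ref{prop:CandZ}, each $\mathfrak{n} \in \operatorname{fam}(\SSS, \sigma)$ satisfies $\mathfrak{D}_{\SSS, \sigma} \subseteq \mathfrak{n} \subseteq \mathfrak{Z}_{\SSS, \sigma}$, so $\mathfrak{n}$ is determined by the subspace $W := \mathfrak{n}/\mathfrak{D}_{\SSS, \sigma}$, which Lemma~\ref{lem:RCbasis} identifies with a subspace of $\FF_{q}^{\operatorname{CR}(\SSS)}$. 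The first step is to translate the defining conditions of $\operatorname{fam}(\SSS, \sigma)$ into intrinsic conditions on $W$: the condition $\operatorname{supp}(\mathfrak{n}) = \up{\lambda}$ becomes (a) every $\CCC \in \operatorname{cols}(\SSS)$ lies in $\operatorname{supp}(W)$, while the condition on vertical edges becomes (b) $b_{\RRR} \notin W$ for every $\RRR \in \operatorname{rows}(\SSS)$. The key observation is that the binding relations $e_{i, j+1} - \sigma(\cdots) e_{j, l} \in \mathfrak{D}_{\SSS, \sigma}$ identify all $e_{i, j}$ with $\dedge{i}{j}$ in a common row up to $\sigma$-scalars, so the $|\nu|$ vertical-edge conditions collapse into $|\operatorname{rows}(\SSS)|$ constraints, and the $\nu$-part of $\operatorname{supp}(\mathfrak{n}) = \up{\lambda}$ is automatic from $\mathfrak{D}_{\SSS, \sigma} \subseteq \mathfrak{n}$.

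The second step is to classify subspaces $W \subseteq \FF_{q}^{\operatorname{CR}(\SSS)}$ satisfying (a) and (b) by reduced row echelon form with respect to the order from Section~\ref{sec:spliceordering}. Such a $W$ has a pivot set $P \subseteq \operatorname{CR}(\SSS)$, and for each non-pivot $v \in \operatorname{supp}(W)$ nontrivial RREF coefficients $\tau(u, v) \in \FF_{q}^{\times}$ supported on a nonempty set of pivots $u < v$. I would define $U := P \cup (\operatorname{CR}(\SSS) \setminus \operatorname{supp}(W))$, $V := \operatorname{supp}(W) \setminus P$, and take $E$ and $\tau$ to be the support and values of the RREF coefficients. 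Verifying that $(U, V, E, \tau) \in \overset{\to}{\mathscr{G}}_{\operatorname{CR}(\SSS)}(q)$ reduces to three checks: edges satisfy $u < v$ by RREF, every $v \in V$ has a nonempty incoming neighborhood since $v \in \operatorname{supp}(W)$, and the minimum element of $\operatorname{CR}(\SSS)$---being a column---lies in $\operatorname{supp}(W)$ by (a) and hence in $P \subseteq U$.

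For the reverse direction, given $(\MMM, \tau)$ with $\MMM = (U, V, E)$, I would observe that the generators $g_{\UUU} = b_{\UUU} + \sum_{(\UUU, \VVV) \in E} \tau(\UUU, \VVV) b_{\VVV}$ in~\eqref{eq:idealdef} are already in reduced row echelon form: they have distinct pivots $b_{\UUU}$, and because $E \subseteq U \times V$ with $\UUU < \VVV$, no generator contributes to another's pivot position. Reading off the RREF data from the resulting $W$ recovers the pivot set as $U_{\operatorname{cat}} := (U \cap \operatorname{cols}(\SSS)) \cup \{\UUU \in U : \UUU \text{ has an outgoing edge in } E\}$ and the non-pivot coefficients as $\tau$. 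The identities $U = U_{\operatorname{cat}} \sqcup (\operatorname{CR}(\SSS) \setminus \operatorname{supp}(W))$ and $V = \operatorname{supp}(W) \setminus U_{\operatorname{cat}}$ then allow a full reconstruction of $(U, V, E)$, confirming that the two constructions are mutually inverse.

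The main obstacle I anticipate is bookkeeping around rows $\RRR \in U$ with no outgoing edges: such $\RRR$ contribute no generator in~\eqref{eq:idealdef}, so $b_{\RRR}$ lies outside $\operatorname{supp}(W)$; yet the matroid must still place $\RRR$ in $U$, because the loopless condition forbids $\RRR \in V$ without incoming edges. Verifying (b) case by case---$\RRR \in U_{\operatorname{cat}}$ with outgoing edges, $\RRR \in U \setminus U_{\operatorname{cat}}$ silent, or $\RRR \in V$ with only incoming edges---and tracking these silent basis elements consistently through both directions is where the matroid-graph bijection of Proposition~\ref{prop:matroidgraphbijection} becomes essential.
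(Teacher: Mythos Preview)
Your proposal is correct and follows essentially the same route as the paper: the paper also reduces to classifying subspaces $W \subseteq \mathfrak{Z}_{\SSS,\sigma}/\mathfrak{D}_{\SSS,\sigma}$ via the reduced row-echelon (``RRE'') basis, proves the exact characterization you state as (a) and (b) in a separate lemma (Lemma~\ref{lem:reducedbasis}), and then constructs the inverse map by reading $(U,V,E,\tau)$ off the RRE data just as you describe. Your handling of the ``silent'' rows in $U$ with no outgoing edges is precisely how the paper's generator condition in~\eqref{eq:idealdef} accounts for them, so there is no gap.
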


This proposition gives the remainder of Theorem~\ref{thm:ideals}.  The proof will follow Lemma~\ref{lem:reducedbasis}.

\begin{ex}\label{ex:matroidbijection}
For $n = 6$, and $q = 5$, take $\SSS = \lambda \sqcup \nu$, $\sigma: \operatorname{bind}(\SSS) \to \FF_{5}^{\times}$, $\operatorname{cols}(\SSS) = \{\CCC_{1}, \CCC_{2}\}$, and $\operatorname{rows}(\SSS) = \{\RRR_{1}\}$ as given by
\begin{center}
\begin{tikzpicture}[scale = 1, baseline = -0.6cm]
\begin{scope}[xshift = 0cm, yshift = 0cm]
\draw[thick, color = black] (0, 0) -- node[above, pos = 0.5] {} (1, 0);
\draw[color = specialgold, dashed, thick] (0, 0) -- (0, -1);
\draw[color = specialgold, dashed, thick] (1, 0) -- (1, -1);
\path (0.5, 0) -- node[pos = 0.5, color = red] {$3$} (0.5, -1);
\draw[fill = black] (0, 0) circle (2pt) node[above] {$\scriptstyle 1$};
\draw[fill = black] (1, 0) circle (2pt) node[above] {$\scriptstyle 2$};
\end{scope}
\begin{scope}[xshift = 0cm, yshift = -1cm]
\draw[thick, color = black] (0, 0) -- node[above, pos = 0.3] {} (1, 0);
\draw[fill = black] (0, 0) circle (2pt) node[below] {$\scriptstyle 3$};
\draw[fill = black] (1, 0) circle (2pt) node[below] {$\scriptstyle 5$};
\end{scope}
\begin{scope}[xshift = 2cm, yshift = -1cm]
\draw[thick, color = black] (0, 0) -- node[above, pos = 0.5] {} (1, 0);
\draw[fill = black] (0, 0) circle (2pt) node[below] {$\scriptstyle 4$};
\draw[fill = black] (1, 0) circle (2pt) node[below] {$\scriptstyle 6$};
\end{scope}
\begin{scope}[xshift = 4cm, yshift = 0.6cm]
\node at (0, -0.9) {$\RRR_{1}$};
\end{scope}
\begin{scope}[xshift = -0.5cm, yshift = 0cm]
\node at (1, 1) {$\CCC_{1}$};
\node at (3, 1) {$\CCC_{2}$};
\end{scope}
\end{tikzpicture}.
\end{center}
The canonical order on $\operatorname{CR}(\SSS)$ is $\CCC_{1} < \CCC_{2} < \RRR_{1}$, and the set of loopless binary matroids on an isomorphic ordered set is given in Example~\ref{ex:unibipartite}.  Below are three elements of $\overset{\to}{\mathscr{G}}_{\operatorname{CR}(\SSS)}(5)$ with the corresponding members of $\operatorname{fam}(\SSS, \sigma)$:
\begin{align*}
\begin{tikzpicture}[baseline = 0.5*0.75cm]
\draw[fill] (0, 0.75) circle (2pt) node[inner sep = 2pt] (T1) {};
\draw[fill] (0, 0) circle (2pt) node[inner sep = 2pt] (M) {};
\draw[fill] (1, 0.75) circle (2pt) node[inner sep = 2pt] (T2) {};
\node[left] at (T1) {$\CCC_{1}$};
\node[left] at (M) {$\CCC_{2}$};
\node[right] at (T2) {$\RRR_{1}$};
\draw[->] (M) -- node[pos = 0.6, below] {\color{red} $4$} (T2);
\end{tikzpicture}
\quad &\longmapsto \quad
\FF_{q} b_{\CCC_{1}} + \FF_{q}(b_{\CCC_{2}} + 4 b_{\RRR_{1}}) + \mathfrak{D}_{\SSS, \sigma}; \\[0.5em]
\begin{tikzpicture}[baseline = 0.5*0.75cm]
\draw[fill] (0, 0.75) circle (2pt) node[inner sep = 2pt] (T1) {};
\draw[fill] (0, 0) circle (2pt) node[inner sep = 2pt] (M) {};
\draw[fill] (1, 0.75) circle (2pt) node[inner sep = 2pt] (T2) {};
\node[left] at (T1) {$\CCC_{1}$};
\node[left] at (M) {$\CCC_{2}$};
\node[right] at (T2) {$\RRR_{1}$};
\draw[->] (T1) -- node[pos = 0.5, above] {\color{red} $1$} (T2);
\draw[->] (M) -- node[pos = 0.6, below] {\color{red} $2$} (T2);
\end{tikzpicture}
\quad &\longmapsto \quad
\FF_{q} (b_{\CCC_{1}} + b_{\RRR_{1}}) + \FF_{q}(b_{\CCC_{2}} + 2 b_{\RRR_{1}}) + \mathfrak{D}_{\SSS, \sigma}\text{; and}\\[0.5em]
\begin{tikzpicture}[baseline = 0.5*0.75cm]
\draw[fill] (0, 0.75) circle (2pt) node[inner sep = 2pt] (T1) {};
\draw[fill] (1, 0.75) circle (2pt) node[inner sep = 2pt] (M) {};
\draw[fill] (0, 0) circle (2pt) node[inner sep = 2pt] (T2) {};
\node[left] at (T1) {$\CCC_{1}$};
\node[right] at (M) {$\CCC_{2}$};
\node[left] at (T2) {$\RRR_{1}$};
\draw[->] (T1) -- node[pos = 0.5, above] {\color{red} $3$} (M);
\end{tikzpicture}
\quad &\longmapsto \quad
\FF_{q} (b_{\CCC_{1}} + 3b_{\CCC_{2}}) + \mathfrak{D}_{\SSS, \sigma}.
\end{align*}
\end{ex}

In order to prove Proposition~\ref{prop:matroidbijection}, we will view $\mathfrak{Z}_{\SSS, \sigma}/\mathfrak{D}_{\SSS, \sigma}$ as the set of $\operatorname{CR}(\SSS)$-indexed coordinates from $\FF_{q}$, as in
\[
(\alpha_{\CCC_{1}}, \ldots, \alpha_{\CCC_{|\operatorname{cols}(\SSS)|}}, \alpha_{\RRR_{1}}, \ldots, \alpha_{\RRR_{|\operatorname{rows}(\SSS)|}} ) = \sum_{\VVV \in \operatorname{CR}(\SSS)} \alpha_{\VVV} b_{\VVV} + \mathfrak{D}_{\SSS, \sigma}.
\]
Given any subspace $W$ of $\mathfrak{Z}_{\SSS, \sigma}/\mathfrak{D}_{\SSS, \sigma}$, there is a canonical basis for $W$ which will serve as a stand-in for $W$ itself.  Let $k = \dim(W)$ and take $(x_{i, \VVV})_{1 \le i \le k, \VVV \in \operatorname{CR}(\SSS)}$ to be the unique reduced row-echelon form matrix over $\FF_{q}$ with row span $W$.  That is, if
\[
x_{i} = (x_{i, \CCC_{1}}, \ldots, x_{i, \RRR_{|\operatorname{rows}(\SSS)|}}) \qquad\text{for $1 \le i \le k$,}
\]
then $W = \FF_{q}\text{-span}\{x_{1}, \ldots, x_{k}\}$ and there are elements $\UUU_{1} < \UUU_{2} < \cdots < \UUU_{k}$ of $\operatorname{CR}(\SSS)$ with
\begin{enumerate}
\item $x_{i, \VVV} = 0$ for all $\VVV < \UUU_{i}$, and 

\item $x_{i,\, \UUU_{j}} = \delta_{i, j}$.  

\end{enumerate}
Call this basis the \emph{RRE basis} of $W$.

\begin{lem}\label{lem:reducedbasis}
Let $W$ be a subspace of $\mathfrak{Z}_{\SSS, \sigma}/\mathfrak{D}_{\SSS, \sigma}$ with RRE basis $\{x_{1}, \ldots, x_{k}\}$.  Then $W = \mathfrak{n}/\mathfrak{D}_{\SSS, \sigma}$ for some $\mathfrak{n} \in \operatorname{fam}(\SSS, \sigma)$ if and only if
\begin{enumerate}
\item[(a)] for each $\CCC \in \operatorname{cols}(\SSS)$, $x_{i, \CCC} \neq 0$ for at least one value of $i$, $1 \le i \le k$, and 

\item[(b)] for each $1 \le i \le k$, $x_{i} \neq b_{\RRR} + \mathfrak{D}_{\SSS, \sigma}$ for any $\RRR \in \operatorname{rows}(\SSS)$.

\end{enumerate}
\end{lem}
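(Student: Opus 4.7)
The plan is to reduce the characterization of $\operatorname{fam}(\SSS, \sigma)$ inside $\mathfrak{Z}_{\SSS, \sigma}/\mathfrak{D}_{\SSS, \sigma}$ to two concrete conditions on $\mathfrak{n}$, and then match each to (a) and (b) in turn. First, I will observe that by Lemma~\ref{lem:Zcommutator}, every $\FF_q$-subspace $\mathfrak{n}$ with $\mathfrak{D}_{\SSS, \sigma} \subseteq \mathfrak{n} \subseteq \mathfrak{Z}_{\SSS, \sigma}$ is automatically an ideal, since $[\ut_{n}, \mathfrak{n}] \subseteq [\ut_{n}, \mathfrak{Z}_{\SSS, \sigma}] = \mathfrak{D}_{\SSS, \sigma} \subseteq \mathfrak{n}$.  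Proposition~\ref{prop:CandZ} guarantees that every ideal in $\operatorname{fam}(\SSS, \sigma)$ lies between these bounds, so the question reduces to identifying which such $\mathfrak{n}$ satisfy the remaining defining conditions: (i) $\operatorname{supp}(\mathfrak{n}) = \up{\lambda}$ and (ii) $\FF_{q} e_{r, s} \not\subseteq \mathfrak{n}$ for each $\dedge{r}{s} \in \nu$.

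Second, I will translate (i) into condition (a).  Since $\operatorname{supp}(\mathfrak{D}_{\SSS, \sigma}) = \upnum{\lambda}{1}$ and $\up{\lambda} = \lambda \sqcup \upnum{\lambda}{1}$ (as $\lambda$ is the antichain of $\preceq$-minimal elements of $\up{\lambda}$), condition (i) reduces to requiring a nonzero $(i, j)$-entry in $\mathfrak{n}$ for each $\edge{i}{j} \in \lambda$.  By Lemma~\ref{lem:RCbasis}, each such $\edge{i}{j}$ lies in a unique column $\CCC$, and among the basis vectors only $b_{\CCC}$ is supported on that position (the $b_{\RRR}$ are supported inside $\nu \subseteq \upnum{\lambda}{1}$).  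Hence (i) holds if and only if, for every $\CCC \in \operatorname{cols}(\SSS)$, $W$ contains an element with nonzero $\CCC$-coordinate; this is exactly (a).

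Third, I will translate (ii) into condition (b).  For each $\dedge{r}{s} \in \nu$ lying in a row $\RRR$, iterating the binding relations $e_{i, j+1} - \sigma(\cdot) e_{j, l} \in \mathfrak{D}_{\SSS, \sigma}$ along $\RRR$ yields $e_{r, s} + \mathfrak{D}_{\SSS, \sigma} = c\, b_{\RRR} + \mathfrak{D}_{\SSS, \sigma}$ for some $c \in \FF_{q}^{\times}$.  Consequently $\FF_{q} e_{r, s} \subseteq \mathfrak{n}$ if and only if $b_{\RRR} \in W$, so condition (ii) becomes: $b_{\RRR} \notin W$ for every $\RRR \in \operatorname{rows}(\SSS)$.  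To finish, I will show that $b_{\RRR} \in W$ forces $b_{\RRR}$ to coincide with one of the RRE basis vectors: writing $b_{\RRR} = \sum_{i} \alpha_{i} x_{i}$ and evaluating at each pivot coordinate $\UUU_{j}$, the identity $x_{i, \UUU_{j}} = \delta_{i, j}$ forces $\alpha_{j} = 0$ unless $\UUU_{j} = \RRR$; since $b_{\RRR} \neq 0$, some pivot must equal $\RRR$ and the corresponding $x_{j}$ equals $b_{\RRR}$.  Hence $b_{\RRR} \in W$ is equivalent to the negation of (b).

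The argument is largely bookkeeping.  The only mildly subtle step is the reduction $e_{r, s} \equiv c\, b_{\RRR} \pmod{\mathfrak{D}_{\SSS, \sigma}}$, which depends on the specific form of $\mathfrak{D}_{\SSS, \sigma}$ and the chaining structure of the row $\RRR$; everything else falls out immediately from Lemmas~\ref{lem:Zcommutator} and~\ref{lem:RCbasis}, together with the uniqueness of reduced row-echelon form.
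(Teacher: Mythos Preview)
Your proof is correct and follows essentially the same route as the paper's: both arguments observe via Lemma~\ref{lem:Zcommutator} that any $\mathfrak{n}$ between $\mathfrak{D}_{\SSS,\sigma}$ and $\mathfrak{Z}_{\SSS,\sigma}$ is an ideal, reduce membership in $\operatorname{fam}(\SSS,\sigma)$ to the support condition and the $\nu$-condition, and then match these to (a) and (b) using the basis of Lemma~\ref{lem:RCbasis}. Your treatment is slightly more explicit in two places---you invoke Proposition~\ref{prop:CandZ} to justify restricting to the interval $[\mathfrak{D}_{\SSS,\sigma},\mathfrak{Z}_{\SSS,\sigma}]$, and you spell out the pivot-coordinate computation showing $b_{\RRR}\in W$ forces $b_{\RRR}$ to be one of the $x_i$---but the underlying argument is the same.
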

\begin{proof}
Let $\mathfrak{n}$ be the unique subspace of $\mathfrak{Z}_{\SSS, \sigma}$ with $\mathfrak{D}_{\SSS, \sigma} \subseteq \mathfrak{n}$ and $\mathfrak{n}/\mathfrak{D}_{\SSS, \sigma} = W$.  By Lemma~\ref{lem:Zcommutator}, $\mathfrak{n}$ is an ideal of $\ut_{n}$.  Recall that $\mathfrak{n} \in \operatorname{fam}(\SSS, \sigma)$ if and only if
\[
\begin{array}{rl}
\text{(i)} &\quad \operatorname{supp}(\mathfrak{n}) = \up{\lambda}, \\[0.25em]
\text{(ii)} &\quad \text{for each $\edge{i}{j} \in \upnum{\lambda}{1}$, $e_{i, j} \in \mathfrak{n}$ if and only if $\edge{i}{j} \notin \nu$, and}\\[0.25em]
\text{(iii)} &\quad \mathfrak{n} \subseteq \mathfrak{Z}_{\SSS, \sigma}.
\end{array}
\]
By assumption, (iii) holds.  The following shows that (i) is equivalent to (a), and (ii) to (b).  

For each $\edge{r}{s} \in \lambda$, the $(r, s)$-entry of each member of any coset in $\ut_{\lambda}/\ut_{\upnum{\lambda}{1}}$ will be identical.  As $\mathfrak{D}_{\SSS, \sigma} \subseteq \ut_{\upnum{\lambda}{1}}$, the same is true for $\ut_{\lambda}/\mathfrak{D}_{\SSS, \sigma}$.  Therefore, for each $a \in \mathfrak{Z}_{\SSS, \sigma}$, $\CCC \in \operatorname{cols}(\SSS)$, and $\edge{r}{s} \in \CCC$, the $(r, s)$-entry of each member of $a + \mathfrak{D}_{\SSS, \sigma}$ is nonzero if and only if the $\CCC$-coordinate of $a + \mathfrak{D}_{\SSS, \sigma}$ is nonzero.  Since $\operatorname{cols}(\SSS)$ partitions the set $\lambda$, (i) is equivalent to (a).

For $\RRR \in \operatorname{rows}(\SSS)$ and $\dedge{i}{j} \in \RRR$, 
\[
\FF_{q}e_{i, j} + \mathfrak{D}_{\SSS, \sigma} = \bigoplus_{\dedge{r}{s} \in \RRR} \FF_{q}e_{r, s} + \mathfrak{D}_{\SSS, \sigma} = \FF_{q}b_{\RRR} + \mathfrak{D}_{\SSS, \sigma},
\]
so (ii) holds if and only if $b_{\RRR} + \mathfrak{D}_{\SSS, \sigma} \notin W$ for any $\RRR \in \operatorname{rows}(\SSS)$.  With the properties of an RRE basis, this is equivalent to condition (b).
\end{proof}

\begin{proof}[Proof of Proposition~\ref{prop:matroidbijection}]
It is not clear from the outset that each $\mathfrak{n}_{\SSS, \sigma, \MMM, \tau}$ belongs to $\operatorname{fam}(\SSS, \sigma)$, so this must be established.  Fix  $(\MMM, \tau) \in \overset{\to}{\mathscr{G}}_{\operatorname{CR}(\SSS)}(q)$ with $\MMM = (U, V, E)$ and write
\[
\{\UUU_{1}, \UUU_{2}, \ldots, \UUU_{k}\} = \left\{ \begin{array}{c} \text{$\UUU \in U$ with either} \\ \text{$(\UUU, \VVV) \in E$ for some} \\ \text{$\VVV \in V$,  or $\UUU \in \operatorname{cols}(\SSS)$} \end{array} \right\}
\]
in increasing order.  Let
\[
x_{i} = b_{\UUU_{i}} + \sum_{(\UUU_{i}, \VVV) \in E} \tau(\UUU_{i}, \VVV) b_{\VVV}
\]
so that by definition
\[
\mathfrak{n}_{\SSS, \sigma, \MMM, \tau} =  \FF_{q}\text{-span}\{x_{i} \;|\; 1 \le i \le k\} + \mathfrak{D}_{\SSS, \sigma}.
\]
The set $\{x_{i} \;|\; 1 \le i \le k\}$ is the RRE basis of $W = \mathfrak{n}_{\SSS, \sigma, \MMM, \tau}/\mathfrak{D}_{\SSS, \sigma}$, as the definition of $\overset{\to}{\mathscr{G}}_{\operatorname{CR}(\SSS)}$ ensures that 
\begin{enumerate}
\item $x_{i, \VVV} = 0$ for all $\VVV < \UUU_{i}$, and 

\item $x_{i, \UUU_{j}} = \delta_{i, j}$.  

\end{enumerate}
This basis satisfies the conditions of Lemma~\ref{lem:reducedbasis}, so $\mathfrak{n} \in \operatorname{fam}(\SSS, \sigma)$.

To show that the given map is also a bijection, construct an inverse as follows.  For $\mathfrak{n} \in \operatorname{fam}(\SSS, \sigma)$, let $W = \mathfrak{n}/\mathfrak{D}_{\SSS, \sigma}$ and take $\{x_{i} \;|\; 1 \le i \le k\}$ to be the RRE basis of $W$ with corresponding indices $\UUU_{1} < \UUU_{2} < \cdots < \UUU_{k}$ satisfying conditions 1.~and 2.~above.  Let
\begin{align*}
V &= \{\VVV \in \operatorname{CR}(\SSS) \;|\;\text{$x_{i, \VVV} \neq 0$ and $\VVV \neq \UUU_{i}$ for every $1 \le i \le k$}\}, \\
U &= \operatorname{CR}(\SSS) \setminus V, \\
E &= \{(\UUU_{i}, \VVV) \in U \times V \;|\; x_{i, \VVV} \neq 0 \},\;\text{and} \\
\tau(\UUU_{i}, \VVV) &= x_{i, \VVV}, 
\end{align*}
and let $\MMM = (U, V, E)$.   Then $\MMM \in \overset{\to}{\mathscr{G}}_{\operatorname{CR}(\SSS)}$ and $(\MMM, \tau) \in \overset{\to}{\mathscr{G}}_{\operatorname{CR}(\SSS)}(q)$, since $\{x_{i} \;|\; 1 \le i \le k\}$ is an RRE basis.  Finally, $\mathfrak{n}_{\SSS, \sigma, \MMM, \tau} = \mathfrak{n}$ by definition.
\end{proof}

\section{Normal subgroups}
\label{sec:normalsubgroups}

Recall that Theorem~\ref{introthm:A} asserts a correspondence between the normal subgroups of $\UT_{n}(\FF_{q})$ and certain ideal-like additive subgroups of $\ut_{n}(\FF_{q})$, and that as a consequence the results of Section~\ref{sec:ideals} also apply to certain normal subgroups, giving Theorem~\ref{introthm:B}.  One particularly nice consequence is the following corollary.

\begin{cor}\label{cor:normalsubgroups}
Let $p$ be a prime.  The map
\[
\begin{array}{rcl}
\left\{(\SSS, \sigma, \MMM, \tau) \;\middle|\; (\SSS, \sigma) \in \mathscr{T}_{n}(p), (\MMM, \tau) \in \overset{\to}{\mathscr{G}}_{\operatorname{CR}(\SSS)}(p)  \right\} & \longrightarrow & \{ N \trianglelefteq \UT_{n}(\FF_{p}) \} \\
(\SSS, \sigma, \MMM, \tau) & \longmapsto & 1 + \mathfrak{n}_{\SSS, \sigma, \MMM, \tau}
\end{array}
\]
is a bijection.
\end{cor}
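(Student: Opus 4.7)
The plan is to derive the corollary by composing two results already in hand: Lev\v{c}uk's correspondence (Theorem~\ref{thm:correspondence}) between normal subgroups of $\UT_n(\FF_q)$ and the Lie ring ideals of $\ut_n(\FF_q)$, and Theorem~\ref{thm:ideals}, which parametrizes the Lie algebra ideals of $\ut_n(\FF_q)$ by the tuples $(\SSS, \sigma, \MMM, \tau)$. The only real work is to verify that, when $q = p$ is prime, these two families of ``ideals'' coincide.

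First I would record the scalar-closure observation that powers everything. Because $\FF_p$ has prime order as an additive group, every nonzero element generates $\FF_p$ under addition, so any additive subgroup $\mathfrak{n} \le \ut_n(\FF_p)$ is automatically closed under scalar multiplication by $\FF_p$, i.e.\ is an $\FF_p$-subspace. Consequently, a subset $\mathfrak{n} \subseteq \ut_n(\FF_p)$ that is additively closed and satisfies $\{[a, b] \;|\; a \in \ut_n(\FF_p),\, b \in \mathfrak{n}\} \subseteq \mathfrak{n}$ is precisely a Lie algebra ideal of $\ut_n(\FF_p)$. Combined with Theorem~\ref{thm:correspondence}, this identifies
\[
\mathscr{I}_n(p) = \{N \trianglelefteq \UT_n(\FF_p)\},
\]
so that $\mathfrak{n} \mapsto 1 + \mathfrak{n}$ is a bijection from Lie algebra ideals of $\ut_n(\FF_p)$ onto normal subgroups of $\UT_n(\FF_p)$.

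Combining this identification with the bijection from Theorem~\ref{thm:ideals} applied at $q = p$ then yields the corollary: the composite map $(\SSS, \sigma, \MMM, \tau) \mapsto \mathfrak{n}_{\SSS, \sigma, \MMM, \tau} \mapsto 1 + \mathfrak{n}_{\SSS, \sigma, \MMM, \tau}$ is bijective as a composition of two bijections. There is no genuine obstacle here; the entire statement is a formal consequence of Theorem~\ref{thm:ideals} together with Theorem~\ref{thm:correspondence}, with the prime hypothesis serving only to collapse the distinction between Lie ring and Lie algebra ideals. All of the combinatorial substance has been absorbed by Sections~\ref{sec:splices} and~\ref{sec:ideals}, and nothing new needs to be built to close the argument.
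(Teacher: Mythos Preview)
Your proposal is correct and matches the paper's approach: the corollary is stated without a separate proof, as an immediate consequence of Theorem~\ref{thm:correspondence} and Theorem~\ref{thm:ideals} (equivalently Theorem~\ref{introthm:B}), with the primality of $p$ used exactly as you say to identify Lie ring ideals with Lie algebra ideals.
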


A number of examples which apply Theorem~\ref{thm:ideals} to construct previously obscure ideals (and thus normal subgroups) can be found in Section~\ref{sec:ideals}.  The indexing for the better known normal subgroups of $\UT_{n}(\FF_{q})$ in Corollary~\ref{cor:normalsubgroups} is similar but distinct from previous work.

In~\cite[Theorem 4.1]{Marberg11b}, Marberg constructs a family of normal subgroups of $\UT_{n}(\FF_{p})$ which have the form $1 + \mathfrak{a}$ for a two-sided associative algebra ideal $\mathfrak{a} \subseteq \ut_{n}(\FF_{p})$.  These ideals are nicely characterized as the subspaces $\mathfrak{a} \subseteq \ut_{n}(\FF_{p})$ with 
\[
\ut_{\upnum{\lambda}{1}} \subseteq \mathfrak{a} \subseteq \ut_{\lambda},
\]
where $\lambda$ is the nonnesting set partition with $\operatorname{supp}(\mathfrak{a}) = \up{\lambda}$.  Thus in Corollary~\ref{cor:normalsubgroups}, Marberg's normal subgroups---including all central subgroups and normal pattern subgroups---are indexed by trivial splices.  For example,
\[
\begin{pmatrix} 1 & \alpha & \ast & \ast \\ 
0 & 1 & 0 & \ast \\ 
0 & 0 & 1 & 2\alpha \\ 
0 & 0 & 0 & 1 \end{pmatrix}
\;\longleftrightarrow\;
(\SSS, \sigma, \MMM, \tau)
\quad\text{with}\quad
\begin{array}{c}
(\SSS, \sigma) = 
\begin{tikzpicture}[scale = 1, baseline = -1.1cm]
\begin{scope}[xshift = 0cm, yshift = -1cm]
\draw[thick, color = black] (0, 0) -- node[above, pos = 0.3] {} (1, 0);
\draw[fill = black] (0, 0) circle (2pt) node[below] {$\scriptstyle 1$};
\draw[fill = black] (1, 0) circle (2pt) node[below] {$\scriptstyle 2$};
\end{scope}
\begin{scope}[xshift = 2cm, yshift = -1cm]
\draw[thick, color = black] (0, 0) -- node[above, pos = 0.5] {} (1, 0);
\draw[fill = black] (0, 0) circle (2pt) node[below] {$\scriptstyle 3$};
\draw[fill = black] (1, 0) circle (2pt) node[below] {$\scriptstyle 4$};
\end{scope}
\begin{scope}[xshift = -0.5cm, yshift = 0cm]
\node at (1, -0.25) {$\CCC_{1}$};
\node at (3, -0.25) {$\CCC_{2}$};
\end{scope}
\end{tikzpicture}
,\\[1em]
\text{and}\;\;(\MMM, \tau) = 
\begin{tikzpicture}[baseline = 0.65cm]
\draw[fill] (0, 0.75) circle (2pt) node[inner sep = 2pt] (T1) {};
\draw[fill] (1, 0.75) circle (2pt) node[inner sep = 2pt] (M) {};
\node[left] at (T1) {$\CCC_{1}$};
\node[right] at (M) {$\CCC_{2}$};
\draw[->] (T1) -- node[pos = 0.5, above] {\color{red} $2$} (M);
\end{tikzpicture}. \\[1.5em]
\end{array}
\]
More generally, these subgroups are indexed by tuples $(\lambda, f_{\emptyset}, \MMM, \tau)$, where $\lambda = \lambda \sqcup \emptyset$ denotes the trivial splice of $\lambda \in \mathrm{NNSP}_{n}$, $f_{\emptyset}$ is the unique labeling of $\operatorname{bind}(\lambda) = \emptyset$, and $(\MMM, \tau)$ is an $\FF_{p}^{\times}$-labeled loopless binary matroid on the set $\operatorname{CR}(\lambda)$, which contains only one-element columns.

Some caution is required when comparing Corollary~\ref{cor:normalsubgroups} to~\cite[Theorem 4.1]{Marberg11b}, however, as the latter uses a different convention for relating nonnesting set partitions to subsets of $\ut_{n}(\FF_{q})$.  In this paper, a nonnesting set partition determines the support of each corresponding subgroup, but in~\cite{Marberg11b} nonnesting set partitions determine a different property which is, in a sense, dual to support.  The dual property is difficult to describe in the scope of this paper, but the indexing schemes are morally equivalent (see~\cite[Section 3.1]{AliniaeifardThiem20} for details), and versions of both results can be given in either convention.

The remainder of the section is divided into two subsections.  Subsection~\ref{sec:correspondenceproof} gives a proof of Theorem~\ref{introthm:A}, which is not significantly different from that of~\cite[Theorem 1]{Levcuk74}. This is included mainly for completeness, but some intermediate results are needed in the sequel.  Subsection~\ref{sec:normalsubgroupcorollaries} states new results about the normal subgroups of $\UT_{n}(\FF_{q})$, including an enumerative formula and a discussion of the lattice of normal subgroups.

\subsection{Proof of Theorem~\ref{thm:correspondence}}
\label{sec:correspondenceproof}

In this section fix a prime power $q$, and write $\ut_{n} = \ut_{n}(\FF_{q})$ and $\UT_{n} = \UT_{n}(\FF_{q})$.  In order to distinguish the Lie bracket from the group commutator, I will adopt the notation
\[
[a, b]_{\operatorname{Lie}} = ab - ba \quad\text{and}\quad [a, b] = a^{-1}b^{-1}ab.
\]

The following computation will be used repeatedly: for $a \in \UT_{n}$, $\edge{s}{t} \in [[n]]$, and $x \in \FF_{q}$, 
\begin{align}\label{eq:grpcommutator}
[1 + xe_{s, t}, a] &= 1 + x\sum_{\edge{r}{u} \succ \edge{s}{t}} (a^{-1})_{r, s}a_{t, u}e_{r, u}.
\end{align}

\begin{lem}\label{lem:subgroupupnum2}
Let $N \trianglelefteq \UT_{n}$ and take $\lambda \in \mathrm{NNSP}_{n}$ with $\operatorname{supp}(N) = \up{\lambda}$.  Then
\[
[\UT_{n}, [\UT_{n}, N]] = \UT_{\upnum{\lambda}{2}}.
\]
\end{lem}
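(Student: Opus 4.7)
The plan is to mirror the Lie algebra argument of Lemma~\ref{lem:upnum2}, replacing each Lie bracket with a group commutator and invoking the identity~\eqref{eq:grpcommutator} at every step.

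For the containment $[\UT_n, [\UT_n, N]] \subseteq \UT_{\upnum{\lambda}{2}}$, observe first that $\operatorname{supp}(N) = \up{\lambda}$ forces $N \subseteq \UT_\lambda$, and $\UT_\lambda$ is closed under inversion. For $a \in N$ and any elementary generator $1 + xe_{s, t}$, formula~\eqref{eq:grpcommutator} expresses $[1 + xe_{s,t}, a] - 1$ as a sum over $\edge{r}{u} \succ \edge{s}{t}$ of scalar multiples of $e_{r, u}$ with coefficients $x(a^{-1})_{r, s} a_{t, u}$; each nonzero summand forces $\edge{r}{s}$ or $\edge{t}{u}$ to lie in $\up{\lambda}$, placing $\edge{r}{u}$ strictly above that edge and hence in $\upnum{\lambda}{1}$. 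The identity $[b_1 b_2, a] = [b_1, a]^{b_2}[b_2, a]$ together with normality of $\UT_{\upnum{\lambda}{1}}$ propagates this to arbitrary $b \in \UT_n$, so $[\UT_n, N] \subseteq \UT_{\upnum{\lambda}{1}}$. A second iteration, combined with the identification $[\UT_n, \UT_{\upnum{\lambda}{1}}] = \UT_{\upnum{\lambda}{2}}$ recorded at the end of Section~\ref{sec:npsgs}, completes this direction.

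For the reverse containment, set $M = [\UT_n, [\UT_n, N]]$ and show that $1 + \beta e_{i, l} \in M$ for every $\edge{i}{l} \in \upnum{\lambda}{2}$ and every $\beta \in \FF_q$, by reverse induction on $\preceq$ within $\upnum{\lambda}{2}$. For each $\edge{i}{l}$, choose $\edge{j}{k} \in \lambda$ with $\edge{j}{k} \preceq \edge{i}{l}$ and $\Ht(\edge{i}{l}) \ge \Ht(\edge{j}{k}) + 2$, choose $b \in N$ with $b_{j, k} \neq 0$ (using $\edge{j}{k} \in \up{\lambda} = \operatorname{supp}(N)$), and set $a = b^{-1} \in N$ so that $(a^{-1})_{j, k} = b_{j, k} \neq 0$. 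Replacing each Lie bracket $[e_{s, t}, \cdot]$ in Lemma~\ref{lem:upnum2}'s construction of $b(i, l)$ by the group commutator $[1 + xe_{s, t}, \cdot]$ for a scalar $x \in \FF_q^\times$, and keeping the same indices in each of the three cases $i = j < k < l$, $i < j < k < l$, and $i < j < k = l$, produces via two applications of~\eqref{eq:grpcommutator} an element $g \in M$ of the form
\[
g = 1 + \gamma e_{i, l} + w,
\]
where $\gamma$ is a nonzero element of $\FF_q$ proportional to $(a^{-1})_{j, k}$ and the chosen scalars, and $w \in \ut_n$ is supported on edges strictly $\preceq$-covering $\edge{i}{l}$. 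All such edges lie in $\upnum{\lambda}{3}$, so by the inductive hypothesis $1 + \delta e_{r, u} \in M$ for every $\edge{r}{u}$ in the support of $w$ and every $\delta \in \FF_q$; since these elementary matrices generate the unipotent pattern subgroup on that upper set, also $1 + w \in M$. Moreover, $\edge{r}{u} \succ \edge{i}{l}$ implies $r \le i < l \le u$, so $e_{i, l} e_{r, u} = e_{r, u} e_{i, l} = 0$, and hence $g = (1 + \gamma e_{i, l})(1 + w)$. Therefore $1 + \gamma e_{i, l} = g(1 + w)^{-1} \in M$, and varying the scalars makes $\gamma$ range over all of $\FF_q$. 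Since $\{1 + \beta e_{i, l} : \edge{i}{l} \in \upnum{\lambda}{2},\, \beta \in \FF_q\}$ generates $\UT_{\upnum{\lambda}{2}}$, the reverse containment follows.

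The main obstacle is the bookkeeping required to verify that $w$ is genuinely supported strictly above $\edge{i}{l}$ in $\preceq$. Unlike the Lie commutator, the group version produces auxiliary summands coming from off-leading entries of $a^{-1}$ and from every entry of the inner commutator, not just the $\edge{j}{k}$-driven leading term; each such summand must be tracked through both applications of~\eqref{eq:grpcommutator} and shown to strictly $\preceq$-cover $\edge{i}{l}$. Once this case-by-case verification is in hand, the reverse induction cleanly absorbs all corrections and the argument proceeds in close parallel with the Lie case.
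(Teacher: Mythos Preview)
Your proposal is correct and follows the paper's strategy closely: both arguments establish the easy containment from $N\subseteq\UT_\lambda$, then for the reverse containment construct, for each $\edge{i}{l}\in\upnum{\lambda}{2}$, a double group commutator whose $\preceq$-minimal nonzero entry sits at $(i,l)$, using the same three cases $i=j<k<l$, $i<j<k<l$, $i<j<k=l$ as in Lemma~\ref{lem:upnum2}.

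The only substantive difference is the endgame. The paper, having produced $b$ with $b_{i,l}\neq 0$ and all other nonzero entries strictly above $(i,l)$, cites a standard conjugacy fact (Diaconis--Isaacs) to conclude that $b$ is $\UT_n$-conjugate to $1+b_{i,l}e_{i,l}$, and normality of $[\UT_n,[\UT_n,N]]$ then finishes. You instead run a reverse induction on $\preceq$: writing $g=(1+\gamma e_{i,l})(1+w)$ and absorbing $1+w$ into $M$ via the inductive hypothesis. This is a perfectly good self-contained substitute for the cited result, and in fact the conjugacy fact is proved by exactly this kind of downward induction. The paper also buys itself cleaner explicit formulas by judicious placement of inverses (using $a^{-1}$ and once $([\,\cdot\,])^{-1}$), which eliminates the ``auxiliary summands'' you worry about in your last paragraph; with your naive replacement the extra terms do appear, but a direct check in each case confirms they still land strictly above $(i,l)$, so your bookkeeping concern is real but resolvable.
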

\begin{proof}
By assumption $N \subseteq \UT_{\lambda}$, so $[\UT_{n}, [\UT_{n}, N]] \subseteq \UT_{\upnum{\lambda}{2}}$.  The opposite containment will follow from the fact that 
\[
\{1 + xe_{i, l} \;|\; \edge{i}{l} \in \upnum{\lambda}{2}, x \in \FF_{q}\} \subseteq [\UT_{n}, [\UT_{n}, N]],
\]
which generates $\UT_{\upnum{\lambda}{2}}$.  Fix $\edge{i}{l} \in \upnum{\lambda}{2}$ and $x \in \FF_{q}$.  It is possible to choose
\begin{itemize}
\item $\edge{j}{k} \in \lambda$ with $\edge{j}{k} \preceq \edge{i}{l}$ and $ \Ht(\edge{j}{k}) + 2 \le \Ht(\edge{i}{l})$ and 

\item $a \in N$ with $a_{j, k} \neq 0$.

\end{itemize}
Use this choice to define a new element
\[
b = \begin{cases} 
[1 + xe_{k+1, l}, ([1 + e_{k, k+1} , a^{-1}])^{-1}] = 1 + x \sum_{r < k} a_{r, k}e_{r, l} & \text{if $i = j < k < l$}, \\
[1 + xe_{i, j}, [1 + e_{k, l}, a^{-1}]] =  1 + x\sum_{u \ge l} a_{j, k}(a^{-1})_{l, u}  e_{i, u} & \text{if $i < j < k < l$}, \\
[1 + xe_{i, j-1} , [1 + e_{j-1, j} , a]] =  1 + x\sum_{u > j} a_{j, u}e_{i, u} & \text{if $i < j < k = l$.}
\end{cases}
\]
The $(i, l)$-entry of $b$ is $xa_{j, k} \neq 0$, so a standard computation (e.g.~\cite[Corollary 3.4]{DiaconisIsaacs}) shows that $b$ is conjugate to $1 + xa_{j, k}e_{i, l}$.
\end{proof}

\begin{lem}\label{lem:commutatorcorrespondence}
Let $N$ be a subgroup of $\UT_{n}$.  For any $\lambda \in \mathrm{NNSP}_{n}$ with $\operatorname{supp}(N) \subseteq \up{\lambda}$,
\[
[\UT_{n}, N]\UT_{\upnum{\lambda}{2}} = 1 + \langle[b, a -1 ]_{\operatorname{Lie}} \;|\; b \in \ut_{n}, a \in N \rangle + \ut_{\upnum{\lambda}{2}}.
\]
\end{lem}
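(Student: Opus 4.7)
The plan is to reduce the identity to an additive statement inside an abelian quotient, where the key step is an entry-by-entry comparison of a group commutator and a Lie bracket.

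The first preparatory move is to establish three containments by direct support calculations: (i) $N\subseteq\UT_{\lambda}$ and $[\UT_n, N] \subseteq \UT_{\upnum{\lambda}{1}}$, together with the parallel Lie-algebra statement $[\ut_n, a-1]_{\operatorname{Lie}} \subseteq \ut_{\upnum{\lambda}{1}}$ for $a \in N$; (ii) $\ut_{\upnum{\lambda}{1}} \cdot \ut_{\upnum{\lambda}{1}} \subseteq \ut_{\upnum{\lambda}{2}}$, by the calculation that if $(r,j),(j,s)\in\upnum{\lambda}{1}$ cover minimal elements of $\lambda$ then the height excess of $(r,s)$ over those elements is at least $2$; and (iii) $[\UT_n,\UT_{\upnum{\lambda}{1}}]\subseteq\UT_{\upnum{\lambda}{2}}$, a one-line height check from equation~\eqref{eq:grpcommutator}. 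Consequence (ii) makes $\UT_{\upnum{\lambda}{1}}/\UT_{\upnum{\lambda}{2}}$ abelian and furnishes a group isomorphism $\UT_{\upnum{\lambda}{1}}/\UT_{\upnum{\lambda}{2}} \cong \ut_{\upnum{\lambda}{1}}/\ut_{\upnum{\lambda}{2}}$ via $1+c\mapsto c$. So it suffices to prove the asserted equation modulo $\UT_{\upnum{\lambda}{2}}$, and both sides become additive subgroups of $\ut_{\upnum{\lambda}{1}}/\ut_{\upnum{\lambda}{2}}$.

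The heart of the proof is the congruence
\[
[1+xe_{s,t}, a] - 1 \;\equiv\; [xe_{s,t}, a-1]_{\operatorname{Lie}} \pmod{\ut_{\upnum{\lambda}{2}}}
\]
for all $(s,t)\in[[n]]$, $x\in\FF_q$, and $a\in N$. Starting from equation~\eqref{eq:grpcommutator}, I would expand $[xe_{s,t}, a-1]_{\operatorname{Lie}}$ directly as $x\sum_{u>t}a_{t,u}e_{s,u} - x\sum_{r<s}a_{r,s}e_{r,t}$ and subtract. The discrepancy splits into two families: coefficients $\bigl((a^{-1})_{r,s}+a_{r,s}\bigr)e_{r,t}$ for $r<s$, and cross-terms $(a^{-1})_{r,s}a_{t,u}e_{r,u}$ for $r<s,\,u>t$. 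Writing $a=1+c$ with $c\in\ut_\lambda$, the first family is $\bigl((c^2-c^3+\cdots)\bigr)_{r,s}e_{r,t}$, whose nonzero coefficients force $(r,s)\in\up{\lambda}\cdot\up{\lambda}$ and therefore $(r,t)\in\upnum{\lambda}{2}$ by containment (ii). The second family is nonzero only when $(r,s),(t,u)\in\up{\lambda}$, and the relation $s<t$ contributes the extra unit of height needed to place $(r,u)$ in $\upnum{\lambda}{2}$.

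With the congruence in hand, the final step is a generation argument. The identity $[gh,a]=[g,a]^h[h,a]$, together with containment (iii), shows that conjugation in $\UT_n$ acts trivially on $[\UT_n,N]$ modulo $\UT_{\upnum{\lambda}{2}}$ and that $[gh,a]\equiv[g,a][h,a]$ there; since the $1+xe_{s,t}$ generate $\UT_n$, the subgroup $[\UT_n,N]\UT_{\upnum{\lambda}{2}}/\UT_{\upnum{\lambda}{2}}$ is additively spanned by the images of $[1+xe_{s,t},a]$. Bilinearity of $[\cdot,\cdot]_{\operatorname{Lie}}$ in the first slot simultaneously shows that $\langle[b,a-1]_{\operatorname{Lie}}\,|\,b\in\ut_n,\,a\in N\rangle + \ut_{\upnum{\lambda}{2}}$ is spanned by $[xe_{s,t},a-1]_{\operatorname{Lie}}$. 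The congruence identifies these two generating sets under the isomorphism of containment (ii), and the Lemma follows. The main obstacle is the entrywise check in the middle paragraph; everything else is bookkeeping with $\up{\lambda}$ and $\upnum{\lambda}{k}$.
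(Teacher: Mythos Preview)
Your argument is correct and follows essentially the same architecture as the paper's proof: reduce to the abelian quotient $\UT_{\upnum{\lambda}{1}}/\UT_{\upnum{\lambda}{2}}\cong\ut_{\upnum{\lambda}{1}}/\ut_{\upnum{\lambda}{2}}$, establish the congruence $[1+xe_{s,t},a]-1\equiv[xe_{s,t},a-1]_{\operatorname{Lie}}$ modulo $\ut_{\upnum{\lambda}{2}}$ by an entrywise computation, and then extend to all of $\UT_n$ via the commutator identity $[gh,a]=[g,a]^h[h,a]$ and centrality. The only cosmetic difference is that the paper checks the congruence just for the simple-root generators $e_{j,j+1}$ and appeals to Lemma~\ref{lem:commutator}, whereas you verify it directly for arbitrary $e_{s,t}$; both routes are equally short. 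One small remark: in your first family you invoke containment~(ii) as stated for $\upnum{\lambda}{1}\cdot\upnum{\lambda}{1}$, but what you actually use (and what your parenthetical computation shows) is that $(r,m),(m,s)\in\up{\lambda}$ forces $(r,s)\in\upnum{\lambda}{1}$, whence $(r,t)\in\upnum{\lambda}{2}$ since $t>s$---this is correct, just slightly different from how you labeled~(ii).
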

\begin{proof}
For $a \in \UT_{\lambda}$, $1 \le j < n$, and $x \in \FF_{q}$ equation~\eqref{eq:grpcommutator} gives
\[
[1 + xe_{j, j+1}, a]\UT_{\upnum{\lambda}{2}} = \begin{cases} (1 + x((a^{-1})_{i, j}e_{i, j+1} + a_{j+1, l}e_{j, l}))\UT_{\upnum{\lambda}{2}} &\begin{array}{l}  \text{if $\edge{i}{j}, \edge{j+1}{l} \in \lambda$,} \end{array} \\
(1 + x(a^{-1})_{i, j}e_{i, j+1})\UT_{\upnum{\lambda}{2}} &\begin{array}{l}  \text{if $\edge{i}{j} \in \lambda$ and for all} \\ \quad \text{$l \in [n]$, $\edge{j+1}{l} \notin \lambda$,}\end{array}\\
(1 + x(a^{-1})_{j+1, l}e_{j, l})\UT_{\upnum{\lambda}{2}} &\begin{array}{l}  \text{if $\edge{j+1}{l} \in \lambda$ and for} \\ \quad \text{all $i \in [n]$, $\edge{i}{j} \notin \lambda$,} \end{array} \\
\UT_{\upnum{\lambda}{2}} & \begin{array}{l} \text{otherwise.}\end{array} \end{cases}
\]
As $\operatorname{supp}(\{a\}) \subseteq \up{\lambda}$, $(a^{-1})_{i, j} = - a_{i, j}$ for each $\edge{i}{j} \in \lambda$.  Therefore the above expression matches the analogous statement about commutators from Lemma~\ref{lem:commutator} and
\[
[1 + xe_{j, j+1}, a]\UT_{\upnum{\lambda}{2}} = 1 + [xe_{j, j+1}, a - 1]_{\operatorname{Lie}} + \ut_{\upnum{\lambda}{2}}
\]

By assumption $[\UT_{n}, N] \subseteq \UT_{\upnum{\lambda}{1}}$, so $[\UT_{n}, N]\UT_{\upnum{\lambda}{2}}/\UT_{\upnum{\lambda}{2}}$ is central in $\UT_{n}/\UT_{\upnum{\lambda}{2}}$.  Therefore, for any $a \in N$, $1 \le j, k < n$, and $x, y \in \FF_{q}$
\[
[(1 + xe_{j, j+1})(1 + ye_{r, r+1}), a]\UT_{\upnum{\lambda}{2}} = [1 + xe_{j, j+1}, a]\UT_{\upnum{\lambda}{2}}[1 + ye_{r, r+1}, a]\UT_{\upnum{\lambda}{2}}.
\]
Elements of the form $1 + xe_{j, j+1}$ generate $\UT_{n}$, and from consideration of support
\[
\begin{array}{rcl}
\UT_{\upnum{\lambda}{1}}/\UT_{\upnum{\lambda}{2}} & \longrightarrow & \ut_{\upnum{\lambda}{1}}/\ut_{\upnum{\lambda}{2}} \\
a\UT_{\upnum{\lambda}{2}} & \longmapsto & (a-1) + \ut_{\upnum{\lambda}{2}}
\end{array}
\]
is an isomorphism, completing the proof.
\end{proof}

\begin{lem}\label{lem:quotientisomorphism}
Let $\lambda \in \mathrm{NNSP}_{n}$ and $\SSS \in \mathscr{T}_{\lambda}$.  The bijection
\[
\begin{array}{rcl}
\ut_{\lambda}/\ut_{\up{\lambda} \setminus \SSS} & \longrightarrow &  \UT_{\lambda}/\UT_{\up{\lambda} \setminus \SSS} \\
a + \ut_{\up{\lambda} \setminus \SSS} & \longmapsto & (1+a) \UT_{\up{\lambda} \setminus \SSS}
\end{array}
\]
is an isomorphism of groups.
\end{lem}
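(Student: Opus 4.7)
The plan is to check that this map is a bijection of the right size and a homomorphism. Both quotients have cardinality $q^{|\SSS|}$: on the additive side $\ut_{\up{\lambda}\setminus\SSS}$ has codimension $|\SSS|$ in $\ut_\lambda$, and on the multiplicative side $\UT_\lambda = 1 + \ut_\lambda$ and $\UT_{\up{\lambda}\setminus\SSS} = 1 + \ut_{\up{\lambda}\setminus\SSS}$ have sizes $q^{|\up{\lambda}|}$ and $q^{|\up{\lambda}\setminus\SSS|}$. Surjectivity is immediate. Well-definedness follows by expanding $(1+b)^{-1}(1+a) = 1 + (1+b)^{-1}(a-b)$ and using that $\ut_n \cdot \ut_{\up{\lambda}\setminus\SSS} \subseteq \ut_{\up{\lambda}\setminus\SSS}$, a routine height count: left multiplication sends $\up{\lambda}\setminus\SSS$ into $\upnum{\lambda}{2} \subseteq \up{\lambda}\setminus\SSS$.

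For the homomorphism property, the identity $(1+a)(1+b) = 1 + (a+b) + ab$ combined with the above left-ideal property collapses everything to the following key claim: $ab \in \ut_{\up{\lambda}\setminus\SSS}$ for every $a, b \in \ut_\lambda$. Reading this off the standard basis, the claim is equivalent to saying that whenever $\edge{i}{j}, \edge{j}{k} \in \up{\lambda}$ for some $j$ with $i < j < k$, the edge $\edge{i}{k}$ lies in $\up{\lambda}\setminus\SSS$, i.e., in neither $\lambda$ nor $\nu$.

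The case $\edge{i}{k} \in \lambda$ is immediate: picking $\edge{r}{s} \in \lambda$ with $\edge{r}{s} \preceq \edge{i}{j}$ gives $\edge{r}{s} \prec \edge{i}{k}$ strictly (since $j < k$), contradicting the fact that $\lambda$ is an antichain in $\preceq$. In the case $\edge{i}{k} \in \nu$, tightness gives $\edge{i}{k} \in \upnum{\lambda}{1}\setminus\upnum{\lambda}{2}$, forcing every $\lambda$-edge contained in $[i, k]$ to have height exactly $k-i-1$. Applied to the $\lambda$-edges guaranteed inside $[i, j]$ and inside $[j, k]$, this pins down $k = i+2$, $j = i+1$, and $\edge{i}{i+1}, \edge{i+1}{i+2} \in \lambda$.

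The main obstacle is to rule out this last configuration, which I will do by cascading \ref{S2}. Starting from $\dedge{i}{i+2} \in \nu$ with $\edge{i}{i+1} \in \lambda$, \ref{S2} produces some $l > i+2$ with $\edge{i+2}{l} \in \lambda$ and $\dedge{i+1}{l} \in \nu$; tightness of $\dedge{i+1}{l}$ combined with $\edge{i+1}{i+2} \in \lambda$ then forces $l = i+3$, so $\edge{i+2}{i+3} \in \lambda$. Iterating the same step with the pair $(i+1, i+3)$ in place of $(i, i+2)$ produces $\edge{i+3}{i+4} \in \lambda$, and so on, yielding an infinite chain $\edge{i+k}{i+k+1} \in \lambda$ for all $k \ge 0$, which is impossible in $[n]$. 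This contradiction completes the key claim and hence the lemma.
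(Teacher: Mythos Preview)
Your proof is correct and follows the same overall strategy as the paper: reduce everything to the support claim that $\edge{i}{j}, \edge{j}{k} \in \up{\lambda}$ forces $\edge{i}{k} \in \up{\lambda}\setminus\SSS$, then split into the easy case (one of the two factors has height $\ge 2$, so $\edge{i}{k} \in \upnum{\lambda}{2}$) and the hard case $j = i+1$, $k = i+2$ with $\edge{i}{i+1}, \edge{i+1}{i+2} \in \lambda$.

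The only real difference is in how you dispatch that final configuration. The paper simply cites Lemma~\ref{lem:spliceproperties}: if $\dedge{i}{i+2} \in \nu$, then $i$ and $i+2$ lie in distinct connected components of $\lambda$, contradicting the path $i \to i+1 \to i+2$. Your cascade via \ref{S2} and tightness is essentially a hands-on reproof of (a special case of) that lemma; it is self-contained and avoids the forward reference, at the price of a few more lines. Either route works fine.
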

\begin{proof}
By consideration of support the statement will follow if
\[
\Big\{\edge{i}{k} \;\Big|\; \edge{i}{j}, \edge{j}{k} \in \up{\lambda}\Big\} \subseteq \up{\lambda} \setminus \SSS.
\]

Suppose that $\edge{i}{j}, \edge{j}{k} \in \up{\lambda}$, and note that $\Ht(\edge{i}{k}) = \Ht(\edge{i}{j}) + \Ht(\edge{j}{k})$.  If either $ \Ht(\edge{i}{j}) > 1$ or $\Ht(\edge{j}{k}) > 1$, then $\edge{i}{k} \in \upnum{\lambda}{2}$, a subset of $\up{\lambda} \setminus \SSS$.  Otherwise $\edge{i}{j}, \edge{j}{k} \in \lambda$, in which case $i$ and $k$ are in the same connected component of the graph of $\lambda$.  In this case $\edge{i}{k}$ is not contained in any splice of $\lambda$ by Lemma~\ref{lem:spliceproperties}, so $\edge{i}{k} \in \up{\lambda} \setminus \SSS$.
\end{proof}

\begin{lem}\label{lem:additiveclosure}
Let $\mathfrak{n}$ be a subgroup of $\ut_{n}$ with $\{ [a, b]_{\operatorname{Lie}} \;|\; a \in \ut_{n}, b \in \mathfrak{n} \} \subseteq \mathfrak{n}$, and let $\overline{\mathfrak{n}} = \FF_{q}\operatorname{-span}(\mathfrak{n})$.  Then
\begin{enumerate}
\item $[\ut_{n}, \overline{\mathfrak{n}}]_{\operatorname{Lie}} = \langle [a, b]_{\operatorname{Lie}} \;|\; a \in \ut_{n}, b \in \mathfrak{n} \rangle$, and this is a subset of $\mathfrak{n}$, and

\item $\overline{\mathfrak{n}}$ is an ideal of $\ut_{n}$.

\end{enumerate}
\end{lem}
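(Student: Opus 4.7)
The plan is to exploit the $\FF_{q}$-bilinearity of the Lie bracket, and in particular the identity $x[a,b]_{\operatorname{Lie}} = [xa, b]_{\operatorname{Lie}}$ for $x \in \FF_{q}$, $a \in \ut_{n}$, $b \in \mathfrak{n}$. This lets any scalar coefficient arising in $\overline{\mathfrak{n}}$ be absorbed into the first argument of the bracket, which already lives in $\ut_{n}$; this is the bridge between the additive subgroup $\mathfrak{n}$ and its full $\FF_{q}$-span.

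For part (1), I would first observe that the additive subgroup $S := \langle [a,b]_{\operatorname{Lie}} \mid a \in \ut_{n},\, b \in \mathfrak{n}\rangle$ is in fact an $\FF_{q}$-subspace of $\ut_{n}$: any generator satisfies $x[a,b]_{\operatorname{Lie}} = [xa,b]_{\operatorname{Lie}} \in S$, so $S$ is closed under scalar multiplication. By the hypothesis on $\mathfrak{n}$, every generator lies in $\mathfrak{n}$ and hence (since $\mathfrak{n}$ is an additive subgroup) $S \subseteq \mathfrak{n}$. Now fix $a \in \ut_{n}$ and $b \in \overline{\mathfrak{n}}$, written as $b = \sum_{i} x_{i} b_{i}$ with $x_{i} \in \FF_{q}$ and $b_{i} \in \mathfrak{n}$. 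Bilinearity yields
\[
[a,b]_{\operatorname{Lie}} = \sum_{i} x_{i}[a,b_{i}]_{\operatorname{Lie}} = \sum_{i} [x_{i}a, b_{i}]_{\operatorname{Lie}} \in S,
\]
so $[\ut_{n}, \overline{\mathfrak{n}}]_{\operatorname{Lie}} \subseteq S$. The reverse containment $S \subseteq [\ut_{n}, \overline{\mathfrak{n}}]_{\operatorname{Lie}}$ is immediate from $\mathfrak{n} \subseteq \overline{\mathfrak{n}}$, giving the claimed equality. The inclusion $S \subseteq \mathfrak{n}$ was already noted.

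Part (2) then requires almost nothing. By construction $\overline{\mathfrak{n}}$ is an $\FF_{q}$-subspace of $\ut_{n}$, and part (1) provides
\[
[\ut_{n}, \overline{\mathfrak{n}}]_{\operatorname{Lie}} \subseteq \mathfrak{n} \subseteq \overline{\mathfrak{n}},
\]
so $\overline{\mathfrak{n}}$ is a Lie ideal. There is no real obstacle in the argument; the one subtlety worth flagging is that the entire proof hinges on the scalar-absorption identity $x[a,b]_{\operatorname{Lie}} = [xa,b]_{\operatorname{Lie}}$, without which the passage from a Lie ring ideal to an honest Lie algebra ideal via $\FF_{q}$-span would fail in general (and indeed, as the introduction notes, this distinction is precisely what separates the prime and non-prime cases in Theorems~\ref{introthm:A} and~\ref{introthm:B}).
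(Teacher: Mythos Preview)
Your proof is correct and follows essentially the same approach as the paper: both hinge on the scalar-absorption identity $x[a,b]_{\operatorname{Lie}} = [xa,b]_{\operatorname{Lie}}$ and the bilinearity computation $[a,\sum_i \alpha_i b_i]_{\operatorname{Lie}} = \sum_i [\alpha_i a, b_i]_{\operatorname{Lie}} \in \mathfrak{n} \subseteq \overline{\mathfrak{n}}$. The paper compresses all of part (1) and (2) into this single displayed chain, whereas you spell out each inclusion separately, but the substance is identical.
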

\begin{proof}
Take $a \in \ut_{n}$, $b_{1}, b_{2}, \ldots, b_{k} \in \mathfrak{n}$, and $\alpha_{1}, \alpha_{2}, \ldots, \alpha_{k} \in \FF_{q}$.  Then
\[
\left[a, \sum_{i = 1}^{k} \alpha_{i}b_{i} \right]_{\operatorname{Lie}} = \sum_{i =1}^{k} \alpha_{i}[a, b_{i}]_{\operatorname{Lie}} = \sum_{i =1}^{k} [\alpha_{i}a, b_{i}]_{\operatorname{Lie}} \subseteq \mathfrak{n} \subseteq \overline{\mathfrak{n}}. \qedhere 
\]
\end{proof}

\begin{proof}[Proof of Theorem~\ref{thm:correspondence}]
Let $N \subseteq \UT_{n}$ and $\mathfrak{n} \subseteq \ut_{n}$ with $N = 1 + \mathfrak{n}$, and take $\lambda \in \mathrm{NNSP}_{n}$ with $\up{\lambda} = \operatorname{supp}(N) = \operatorname{supp}(\mathfrak{n})$.  The aim is to show that $N$ is a normal subgroup if and only if $\mathfrak{n}$ is an additive subgroup with $\{ [a, b]_{\operatorname{Lie}} \;|\; a \in \ut_{n}, b \in \mathfrak{n} \} \subseteq \mathfrak{n}$.

First suppose that $N \trianglelefteq \UT_{n}$.  Let $\mathfrak{m} = [\UT_{n}, N] - 1$.  By Lemma~\ref{lem:subgroupupnum2} and Lemma~\ref{lem:commutatorcorrespondence}, 
\begin{equation}\label{eq:correspondencetheoremproof1}
\mathfrak{m} = \langle [a, b]_{\operatorname{Lie}} \;|\; a \in \ut_{n}, b \in \mathfrak{n} \rangle + \ut_{\upnum{\lambda}{2}},
\end{equation}
and $\mathfrak{m} \subseteq \mathfrak{n}$, so what remains is to show that $\mathfrak{n}$ is an additive group.  By equation~\eqref{eq:correspondencetheoremproof1}, $\mathfrak{m}$ and $\mathfrak{n}$ meet the conditions of Lemma~\ref{lem:splicecondition}, so there is a tight splice $\SSS$ of $\lambda$ for which 
\[
\ut_{\up{\lambda} \setminus \SSS} \subseteq \mathfrak{m} \subseteq \mathfrak{n}.
\]
Under the isomorphism of Lemma~\ref{lem:quotientisomorphism}, $\mathfrak{n}/\ut_{\up{\lambda} \setminus \SSS}$ maps to $N/\UT_{\up{\lambda} \setminus \SSS}$, so $\mathfrak{n}$ is a group.

Now suppose that $\mathfrak{n}$ is a subgroup of $\ut_{n}$ with $\{[a, b]_{\operatorname{Lie}} \;|\; a \in \ut_{n}, b \in \mathfrak{n} \} \subseteq \mathfrak{n}$.  By Lemma~\ref{lem:additiveclosure}, $\overline{\mathfrak{n}} = \FF_{q}\operatorname{-span}(\mathfrak{n})$ is an ideal and $[\ut_{n}, \overline{\mathfrak{n}}]_{\operatorname{Lie}} \subseteq \mathfrak{n}$.  By Lemma~\ref{lem:splicecondition}, there is a tight splice $\SSS$ of $\lambda$ for which
\[
\ut_{\up{\lambda} \setminus \SSS} \subseteq [\ut_{n}, \overline{\mathfrak{n}}]_{\operatorname{Lie}} \subseteq \mathfrak{n}.
\]
Under the isomorphism of Lemma~\ref{lem:quotientisomorphism}, $N/\UT_{\up{\lambda} \setminus \SSS}$ is the image of $\mathfrak{n}/\ut_{\up{\lambda} \setminus \SSS}$, and so $N$ is a subgroup of $\UT_{n}$.  Lemma~\ref{lem:commutatorcorrespondence} then gives
\[
[\UT_{n}, N] \subseteq \langle [a, b] \;|\; a \in \ut_{n}, b \in \mathfrak{n} \rangle + \ut_{\upnum{\lambda}{2}} \subseteq N,
\]
and so $N \trianglelefteq \UT_{n}$.
\end{proof}

A slightly stronger restatement of Lemma~\ref{lem:commutatorcorrespondence} now follows.

\begin{cor}\label{cor:commutatorcorrespondence}
Let $1 + \mathfrak{n} \trianglelefteq \UT_{n}(\FF_{q})$ and  $\overline{\mathfrak{n}} = \FF_{q}\operatorname{-span}(\mathfrak{n})$.  Then
\[
[\UT_{n}(\FF_{q}), 1 + \mathfrak{n}] = 1 + [\ut_{n}(\FF_{q}), \overline{\mathfrak{n}}]_{\operatorname{Lie}}.
\]
\end{cor}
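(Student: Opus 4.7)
My plan is to derive the corollary by stripping the ``$\mathrm{mod}\,\UT_{\upnum{\lambda}{2}}$'' qualifier from Lemma~\ref{lem:commutatorcorrespondence}, which in turn requires sandwiching $\ut_{\upnum{\lambda}{2}}$ inside the relevant commutator ideal and $\UT_{\upnum{\lambda}{2}}$ inside the relevant commutator subgroup. Let $\lambda \in \mathrm{NNSP}_{n}$ be the unique nonnesting set partition with $\up{\lambda} = \operatorname{supp}(\mathfrak{n})$; since $\mathfrak{n}$ and $\overline{\mathfrak{n}}$ have the same support, $\up{\lambda}$ also equals $\operatorname{supp}(\overline{\mathfrak{n}})$.

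First I would invoke the correspondence half of the theorem just proved: Theorem~\ref{thm:correspondence} shows $\mathfrak{n}$ is closed under inner Lie derivations, so by Lemma~\ref{lem:additiveclosure} the linear span $\overline{\mathfrak{n}}$ is an honest ideal of $\ut_{n}$ and
\[
[\ut_{n}, \overline{\mathfrak{n}}]_{\operatorname{Lie}} = \langle [b, a-1]_{\operatorname{Lie}} \mid b \in \ut_{n},\, a \in 1+\mathfrak{n} \rangle,
\]
since $\{a-1 \mid a \in 1+\mathfrak{n}\} = \mathfrak{n}$ generates $\overline{\mathfrak{n}}$ as an $\FF_{q}$-space. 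This identifies the additive subgroup appearing in Lemma~\ref{lem:commutatorcorrespondence} with $[\ut_{n}, \overline{\mathfrak{n}}]_{\operatorname{Lie}}$.

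Next I would apply Lemma~\ref{lem:subgroupupnum2} to $N = 1 + \mathfrak{n}$ and Lemma~\ref{lem:upnum2} to the ideal $\overline{\mathfrak{n}}$; both have support $\up{\lambda}$, so respectively
\[
\UT_{\upnum{\lambda}{2}} = [\UT_{n}, [\UT_{n}, 1+\mathfrak{n}]] \subseteq [\UT_{n}, 1+\mathfrak{n}] \quad\text{and}\quad \ut_{\upnum{\lambda}{2}} = [\ut_{n}, [\ut_{n}, \overline{\mathfrak{n}}]_{\operatorname{Lie}}]_{\operatorname{Lie}} \subseteq [\ut_{n}, \overline{\mathfrak{n}}]_{\operatorname{Lie}}.
\]
Consequently $[\UT_{n}, 1+\mathfrak{n}]\UT_{\upnum{\lambda}{2}} = [\UT_{n}, 1+\mathfrak{n}]$ and $[\ut_{n}, \overline{\mathfrak{n}}]_{\operatorname{Lie}} + \ut_{\upnum{\lambda}{2}} = [\ut_{n}, \overline{\mathfrak{n}}]_{\operatorname{Lie}}$. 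Substituting these two simplifications into the equality provided by Lemma~\ref{lem:commutatorcorrespondence} and using the reformulation from the previous paragraph yields
\[
[\UT_{n}, 1+\mathfrak{n}] = 1 + [\ut_{n}, \overline{\mathfrak{n}}]_{\operatorname{Lie}},
\]
as desired.

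There is essentially no obstacle here beyond bookkeeping; the real work is already absorbed into Lemmas~\ref{lem:upnum2}, \ref{lem:subgroupupnum2}, \ref{lem:commutatorcorrespondence}, \ref{lem:additiveclosure}, and Theorem~\ref{thm:correspondence}. The only subtle point to double-check is that $\operatorname{supp}(\overline{\mathfrak{n}}) = \operatorname{supp}(\mathfrak{n})$, which is immediate since enlarging to the $\FF_{q}$-span cannot introduce new nonzero entry positions, and the forward inclusion is trivial.
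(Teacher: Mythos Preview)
Your proof is correct and follows essentially the same approach as the paper's, which simply cites Lemma~\ref{lem:subgroupupnum2}, Lemma~\ref{lem:commutatorcorrespondence}, and Theorem~\ref{thm:correspondence} without further detail. You have merely fleshed out the argument, additionally invoking Lemmas~\ref{lem:upnum2} and~\ref{lem:additiveclosure} to make explicit why $\ut_{\upnum{\lambda}{2}} \subseteq [\ut_{n}, \overline{\mathfrak{n}}]_{\operatorname{Lie}}$ and why the generated additive subgroup in Lemma~\ref{lem:commutatorcorrespondence} coincides with $[\ut_{n}, \overline{\mathfrak{n}}]_{\operatorname{Lie}}$; both are exactly the right supporting facts.
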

\begin{proof}
This follows from Lemma~\ref{lem:subgroupupnum2}, Lemma~\ref{lem:commutatorcorrespondence}, and Theorem~\ref{thm:correspondence}.
\end{proof}

\subsection{Further results on normal subgroups}
\label{sec:normalsubgroupcorollaries}

Some aspects of my description of Lie algebra ideals extend to the set $\{N \trianglelefteq \UT_{n}(\FF_{q})\}$, even when $q$ is not prime.  For $(\SSS, \sigma) \in \mathscr{T}_{n}(q)$ with $\SSS = \lambda \sqcup \nu$, define the \emph{normal subgroup family}
\[
\operatorname{NSGfam}(\SSS, \sigma) = \left\{N \trianglelefteq \UT_{n}(\FF_{q}) \;\middle|\;  \begin{array}{c}
\text{$\operatorname{supp}(N) = \up{\lambda}$,} \\
\text{$\{{\textstyle \edge{i}{j}} \in \upnum{\lambda}{1} \;|\; 1 + \FF_{q}e_{i, j} \not\subseteq N\} = \nu$,}\\
\text{and $N \subseteq 1 + \mathfrak{Z}_{\SSS, \sigma}$} 
\end{array}\right\}.
\]
This set of subgroups relates to the previously defined $\operatorname{fam}(\SSS, \sigma)$ of \textit{ideals} by
\[
\{1 + \mathfrak{n} \;|\; \mathfrak{n} \in \operatorname{fam}(\SSS, \sigma)\} = \{ 1 + \mathfrak{n} \in \operatorname{NSGfam}(\SSS, \sigma) \;|\; \text{$\mathfrak{n}$ is an $\FF_{q}$-subspace}\}.
\]

\begin{cor}\label{cor:grpsplicecondition}
Each normal subgroup of $\UT_{n}(\FF_{q})$ is contained in a unique normal subgroup family:
\[
\{N \trianglelefteq \UT_{n}(\FF_{q})\} = \bigsqcup_{(\SSS, \sigma) \in \mathscr{T}_{n}(q)} \operatorname{NSGfam}(\SSS, \sigma).
\]
Further, $1 + \mathfrak{D}_{\SSS, \sigma} \le N$ for every $N \in \operatorname{NSGfam}(\SSS, \sigma)$.
\end{cor}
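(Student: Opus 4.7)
The plan is to mirror the proof of Proposition~\ref{prop:splicecondition}, using Theorem~\ref{thm:correspondence} to pass between $N \trianglelefteq \UT_n(\FF_q)$ and the additive subgroup $\mathfrak{n} \subseteq \ut_n(\FF_q)$ with $N = 1 + \mathfrak{n}$.  Let $\lambda \in \mathrm{NNSP}_n$ be determined by $\up{\lambda} = \operatorname{supp}(N) = \operatorname{supp}(\mathfrak{n})$.  Applied to $N$ and translated via Corollary~\ref{cor:commutatorcorrespondence}, Lemma~\ref{lem:subgroupupnum2} yields $\ut_{\upnum{\lambda}{2}} \subseteq \mathfrak{n}$.

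Set $\mathfrak{m} = \mathfrak{n} \cap \ut_{\upnum{\lambda}{1}}$.  This is additively closed, contains $\ut_{\upnum{\lambda}{2}}$, and contains every commutator $[a, b]_{\operatorname{Lie}}$ with $a \in \ut_n$ and $b \in \mathfrak{n}$ (such brackets lie in $\mathfrak{n}$ by inner-derivation-closure and in $\ut_{\upnum{\lambda}{1}}$ by support).  Thus Lemma~\ref{lem:splicecondition} applies and outputs a unique tight splice $\SSS = \lambda \sqcup \nu$ and labeling $\sigma : \operatorname{bind}(\SSS) \to \FF_q^{\times}$ with $\mathfrak{n} \subseteq \mathfrak{Z}_{\SSS, \sigma}$.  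The vertical edges $\nu$, defined in the lemma via nonmembership of $\FF_q e_{r, s}$ in $\mathfrak{m}$, coincide with $\{\edge{r}{s} \in \upnum{\lambda}{1} \mid 1 + \FF_q e_{r, s} \not\subseteq N\}$, because for $\edge{r}{s} \in \upnum{\lambda}{1}$ we have $\FF_q e_{r, s} \subseteq \ut_{\upnum{\lambda}{1}}$, so $\FF_q e_{r, s} \subseteq \mathfrak{m}$ iff $\FF_q e_{r, s} \subseteq \mathfrak{n}$ iff $1 + \FF_q e_{r, s} \subseteq N$.  Hence $N \in \operatorname{NSGfam}(\SSS, \sigma)$, and the uniqueness clause of Lemma~\ref{lem:splicecondition} gives the disjoint-union assertion.

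For the containment $1 + \mathfrak{D}_{\SSS, \sigma} \leq N$, it suffices to show $\mathfrak{D}_{\SSS, \sigma} \subseteq \mathfrak{n}$, since the ideal $\mathfrak{D}_{\SSS, \sigma}$ then gives a normal subgroup via Theorem~\ref{thm:correspondence}.  Writing $\mathfrak{D}_{\SSS, \sigma} = \ut_{\up{\lambda} \setminus \SSS} \oplus \bigoplus_{\operatorname{bind}(\SSS)} \FF_q\bigl(e_{i, j+1} - \sigma(\binding{i}{j}{j+1}{l}) e_{j, l}\bigr)$, the first summand lies in $\mathfrak{n}$: for each $\edge{r}{s} \in \up{\lambda} \setminus \SSS = (\upnum{\lambda}{1} \setminus \nu) \cup \upnum{\lambda}{2}$, either $\FF_q e_{r, s} \subseteq \ut_{\upnum{\lambda}{2}} \subseteq \mathfrak{n}$ or $\FF_q e_{r, s} \subseteq \mathfrak{n}$ by the definition of $\nu$.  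For each binding, choose $b \in \mathfrak{n}$ with $b_{i, j} \neq 0$; the normality condition $[1 + xe_{j, j+1}, 1 + b] \in N$ for $x \in \FF_q$, combined with equation~\eqref{eq:grpcommutator} and reduction modulo $\ut_{\upnum{\lambda}{2}} \subseteq \mathfrak{n}$, places every scalar multiple of $e_{i, j+1} - \sigma(\binding{i}{j}{j+1}{l}) e_{j, l}$ inside $\mathfrak{n}$.

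The main subtlety is that $\mathfrak{n}$ is only a priori an $\FF_p$-subspace, so $\FF_q$-scalar closure of the binding summands of $\mathfrak{D}_{\SSS, \sigma}$ is not automatic from the mere containment of a single nonzero multiple; this is precisely resolved by letting the conjugating parameter $x \in \FF_q$ in $1 + xe_{j, j+1}$ range freely, which supplies the missing $\FF_q$-scaling through the normality of $N$.
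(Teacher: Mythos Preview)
Your proof is correct and follows essentially the same route as the paper's: for the first claim both arguments apply Lemma~\ref{lem:splicecondition} to $\mathfrak{n}=N-1$ and $\mathfrak{m}=\mathfrak{n}\cap\ut_{\upnum{\lambda}{1}}$, after using Lemma~\ref{lem:subgroupupnum2} and Lemma~\ref{lem:commutatorcorrespondence} to verify the hypotheses. For the second claim the paper packages the argument as the one-line chain $1+\mathfrak{D}_{\SSS,\sigma}=[\UT_n(\FF_q),1+\mathfrak{Z}_{\SSS,\sigma}]\subseteq N$ via Lemma~\ref{lem:Zcommutator}, Proposition~\ref{prop:CandZ}, and Corollary~\ref{cor:commutatorcorrespondence}, whereas you unpack this by checking $\ut_{\up{\lambda}\setminus\SSS}\subseteq\mathfrak{n}$ and then producing each binding summand by conjugating with $1+xe_{j,j+1}$ for $x\in\FF_q$---this is exactly the commutator computation underlying the proofs of those cited results, so the content is the same.
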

\begin{proof}
Let $N \trianglelefteq \UT_{n}(\FF_{q})$ with $\operatorname{supp}(N) = \up{\lambda}$ for $\lambda \in \mathrm{NNSP}_{n}$.  Then $\UT_{\upnum{\lambda}{2}} \subseteq [\UT_{n}(\FF_{q}), N]$, so Lemma~\ref{lem:commutatorcorrespondence} implies
\[
[\ut_{n}(\FF_{q}), N-1] + \ut_{\upnum{\lambda}{2}} \subseteq (N-1) \cap \ut_{\upnum{\lambda}{1}}.
\]
Thus Lemma~\ref{lem:splicecondition} applies to $\mathfrak{n} = N - 1$ and $\mathfrak{m} = \mathfrak{n} \cap \ut_{\upnum{\lambda}{1}}$, giving the first claim exactly.  The second claim follows from Lemma~\ref{lem:Zcommutator}, Proposition~\ref{prop:CandZ}, and Corollary~\ref{cor:commutatorcorrespondence}: 
\[
1 + \mathfrak{D}_{\SSS, \sigma} = [\UT_{n}(\FF_{q}), 1 + \mathfrak{Z}_{\SSS, \sigma}] \subseteq N. \qedhere
\]
\end{proof}

Lemma~\ref{lem:RCbasis} states that for any $(\SSS, \sigma) \in \mathscr{T}_{n}(q)$, 
\[
\mathfrak{Z}_{\SSS, \sigma}/\mathfrak{D}_{\SSS, \sigma} = \FF_{q}\operatorname{-span}\{b_{\VVV} + \mathfrak{D}_{\SSS, \sigma} \;|\; \VVV \in \operatorname{CR}(\SSS)\}.
\]

\begin{lem}\label{lem:intermediatesubgroup}
Let $(\SSS, \sigma) \in \mathscr{T}_{n}(q)$, and take $W$ to be an additive subgroup of $\mathfrak{Z}_{\SSS, \sigma}/\mathfrak{D}_{\SSS, \sigma}$.  Then $W  = \mathfrak{n}/\mathfrak{D}_{\SSS, \sigma}$ for a normal subgroup $1 + \mathfrak{n} \in \operatorname{NSGfam}(\SSS, \sigma)$ if and only if $W$ satisfies
\[
\tag{Int}\label{cond:Int}
\begin{array}{ll}
\text{if $\VVV \in \operatorname{cols}(\SSS)$:} & \text{$b_{\VVV} + \mathfrak{D}_{\SSS, \sigma}$ occurs with nonzero coefficient in some element of $W$,}\\
\text{if $\VVV \in \operatorname{rows}(\SSS)$:} & \text{$\FF_{q} b_{\VVV} + \mathfrak{D}_{\SSS, \sigma} \not\subseteq W$,}
\end{array}
\]
for each $\VVV \in \operatorname{CR}(\SSS)$.
\end{lem}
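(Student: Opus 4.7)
The plan is to separate the problem into two parts: first, showing that every additive subgroup $\mathfrak{n}$ sandwiched between $\mathfrak{D}_{\SSS,\sigma}$ and $\mathfrak{Z}_{\SSS,\sigma}$ automatically gives a normal subgroup $1+\mathfrak{n}$ satisfying the third defining condition of $\operatorname{NSGfam}(\SSS,\sigma)$; and second, re-expressing the remaining two support conditions as conditions on $W$ via the basis of Lemma~\ref{lem:RCbasis}.  The overall structure mirrors Lemma~\ref{lem:reducedbasis}, but without the luxury of an RRE basis, so each condition must be written intrinsically.

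For the first part, I would invoke Theorem~\ref{introthm:A}: $1+\mathfrak{n}$ is normal if and only if $\mathfrak{n}$ is additively closed and $[\ut_{n}(\FF_{q}),\mathfrak{n}]_{\operatorname{Lie}}\subseteq\mathfrak{n}$.  Assuming $\mathfrak{D}_{\SSS,\sigma}\subseteq\mathfrak{n}\subseteq\mathfrak{Z}_{\SSS,\sigma}$, Lemma~\ref{lem:Zcommutator} yields $[\ut_{n}(\FF_{q}),\mathfrak{n}]_{\operatorname{Lie}} \subseteq [\ut_{n}(\FF_{q}),\mathfrak{Z}_{\SSS,\sigma}]_{\operatorname{Lie}}=\mathfrak{D}_{\SSS,\sigma}\subseteq\mathfrak{n}$, so the bracket condition is automatic.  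This identifies additive subgroups of $\mathfrak{Z}_{\SSS,\sigma}/\mathfrak{D}_{\SSS,\sigma}$ with the set of normal subgroups $1+\mathfrak{n}$ of $\UT_{n}(\FF_{q})$ for which $\mathfrak{D}_{\SSS,\sigma}\subseteq\mathfrak{n}\subseteq\mathfrak{Z}_{\SSS,\sigma}$.  In particular, the containment $N\subseteq 1+\mathfrak{Z}_{\SSS,\sigma}$ holds for free, leaving only the two support conditions in the definition of $\operatorname{NSGfam}(\SSS,\sigma)$ to verify.

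For the second part, I would reuse two observations from the proof of Lemma~\ref{lem:reducedbasis}: for $\CCC\in\operatorname{cols}(\SSS)$ and any $\edge{r}{s}\in\CCC$, the $(r,s)$-entry of a coset $a+\mathfrak{D}_{\SSS,\sigma}$ is a fixed nonzero scalar multiple of its $\CCC$-coordinate; and for $\RRR\in\operatorname{rows}(\SSS)$ and any $\dedge{i}{j}\in\RRR$, the binding relations generating $\mathfrak{D}_{\SSS,\sigma}$ yield $\FF_{q}e_{i,j}+\mathfrak{D}_{\SSS,\sigma}=\FF_{q}b_{\RRR}+\mathfrak{D}_{\SSS,\sigma}$.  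The forward direction is then an edge-by-edge unpacking: $\operatorname{supp}(\mathfrak{n})=\up{\lambda}\supseteq\lambda$ forces, for each $\CCC$, some $w\in W$ to have nonzero $\CCC$-coordinate, and $\FF_{q}e_{i,j}\not\subseteq\mathfrak{n}$ for $\dedge{i}{j}\in\RRR\subseteq\nu$ gives $\FF_{q}b_{\RRR}+\mathfrak{D}_{\SSS,\sigma}\not\subseteq W$.  For the reverse, $\operatorname{supp}(\mathfrak{n})\supseteq\upnum{\lambda}{1}$ is automatic from $\mathfrak{D}_{\SSS,\sigma}\subseteq\mathfrak{n}$, the column half of (Int) supplies the remaining $\lambda$-edges, and the row half paired with $\ut_{\up{\lambda}\setminus\SSS}\subseteq\mathfrak{D}_{\SSS,\sigma}\subseteq\mathfrak{n}$ partitions $\upnum{\lambda}{1}$ into $\nu$ and $\upnum{\lambda}{1}\setminus\nu$ as required.

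The main subtlety compared to Lemma~\ref{lem:reducedbasis} is that $W$ is only additively closed, so neither condition in (Int) may refer to canonical generators; however, because the support conditions themselves are phrased as existentials and (non)containments, the translation is robust and requires no RRE-style structure.  Everything else is a direct unpacking of definitions.
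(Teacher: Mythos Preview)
Your proposal is correct and follows essentially the same approach as the paper's proof, which is very terse: it defines $\mathfrak{n}$ as the preimage of $W$, notes that $\mathfrak{n}\subseteq\mathfrak{Z}_{\SSS,\sigma}$ holds automatically, and then asserts without further detail that the support and $\nu$-conditions are equivalent to the column and row halves of~\eqref{cond:Int}. You are more explicit on two points the paper leaves implicit---the normality of $1+\mathfrak{n}$ via Theorem~\ref{introthm:A} and Lemma~\ref{lem:Zcommutator}, and the unpacking of the support conditions using the column/row identifications from Lemma~\ref{lem:reducedbasis}---but the argument is the same.
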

\begin{proof}
Take $\mathfrak{n} \le \ut_{n}(\FF_{q})$ so that $\mathfrak{D}_{\SSS, \sigma} \subseteq \mathfrak{n}$ and $\mathfrak{n}/\mathfrak{D}_{\SSS, \sigma} = W$.  As $\mathfrak{n} \subseteq \mathfrak{Z}_{\SSS, \sigma}$, what remains is to determine when $\FF_{q}e_{i, j} \not\subseteq \mathfrak{n}$ for each $\edge{i}{j} \in \nu$ and  $\operatorname{supp}(\mathfrak{n}) = \up{\lambda}$, where $\SSS = \lambda \sqcup \nu$.  These conditions are respectively equivalent to $W$ satisfying~\eqref{cond:Int} for all $\RRR \in \operatorname{rows}(\SSS)$ and all $\CCC \in \operatorname{cols}(\SSS)$.
\end{proof}

Recall that Proposition~\ref{prop:CRsize} gives the concise formula $|\operatorname{CR}(\SSS)| = |\SSS| - 2|\operatorname{bind}(\SSS)|$.

\begin{thm}\label{thm:numsubgroups}
For $q = p^{d}$ with $p$ a prime and $d \in \ZZ_{+}$, 
\[
|\{N \trianglelefteq \UT_{n}(\FF_{q})\}| = \sum_{\SSS \in \mathscr{T}_{n}} (q-1)^{|\operatorname{bind}(\SSS)|} \sum_{i = 0}^{|\operatorname{CR}(\SSS)|} (-1)^{|\operatorname{CR}(\SSS)| - i} \binom{|\operatorname{CR}(\SSS)|}{i}  \sum_{j = 0}^{di} \qbinom{di}{j}_{p},
\]
where $\qbinom{n}{k}_{p}$ denotes the $p$-binomial coefficient.
\end{thm}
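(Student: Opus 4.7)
The plan begins with the decomposition from Corollary~\ref{cor:grpsplicecondition},
\[
\{N \trianglelefteq \UT_n(\FF_q)\} = \bigsqcup_{(\SSS, \sigma) \in \mathscr{T}_n(q)} \operatorname{NSGfam}(\SSS, \sigma).
\]
Each $\SSS \in \mathscr{T}_n$ admits exactly $(q-1)^{|\operatorname{bind}(\SSS)|}$ labelings $\sigma$, and the count $|\operatorname{NSGfam}(\SSS, \sigma)|$ will turn out to depend only on $\SSS$; so the theorem reduces to showing, for each fixed $(\SSS, \sigma)$ and $k := |\operatorname{CR}(\SSS)|$, that
\[
|\operatorname{NSGfam}(\SSS, \sigma)| = \sum_{i=0}^{k} (-1)^{k-i} \binom{k}{i} \sum_{j=0}^{di} \qbinom{di}{j}_p.
\]

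To compute this count I will use Lemma~\ref{lem:intermediatesubgroup} to replace $\operatorname{NSGfam}(\SSS, \sigma)$ by the collection of additive subgroups $W$ of $\mathfrak{Z}_{\SSS,\sigma}/\mathfrak{D}_{\SSS,\sigma}$ satisfying (Int), and Lemma~\ref{lem:RCbasis} to identify the ambient quotient as $\FF_p^{dk}$ via the basis $\{b_\VVV + \mathfrak{D}_{\SSS,\sigma}\}_{\VVV \in \operatorname{CR}(\SSS)}$. I then apply inclusion-exclusion on the failure events: for each $\VVV \in \operatorname{CR}(\SSS)$, call $W$ $\VVV$-\emph{deviant} when it violates (Int) at $\VVV$, i.e.\ when $\pi_\VVV(W) = 0$ if $\VVV \in \operatorname{cols}(\SSS)$, and when $\FF_q(b_\VVV + \mathfrak{D}_{\SSS,\sigma}) \subseteq W$ if $\VVV \in \operatorname{rows}(\SSS)$. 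Writing $D(A)$ for the number of $W$ deviant at every $\VVV \in A$, inclusion-exclusion gives $|\operatorname{NSGfam}(\SSS, \sigma)| = \sum_A (-1)^{|A|} D(A)$, and the problem becomes one of evaluating $D(A)$.

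The key claim is that $D(A)$ depends only on $|A|$ and equals the total number of $\FF_p$-subspaces of $\FF_p^{d(k-|A|)}$. Indeed, the deviance conditions at elements of $A$ force the sandwich
\[
L_A := \bigoplus_{\VVV \in A \cap \operatorname{rows}(\SSS)} \FF_q(b_\VVV + \mathfrak{D}_{\SSS,\sigma}) \;\subseteq\; W \;\subseteq\; \bigoplus_{\UUU \notin A \cap \operatorname{cols}(\SSS)} \FF_q(b_\UUU + \mathfrak{D}_{\SSS,\sigma}),
\]
and the correspondence $W \mapsto W/L_A$ identifies such $W$ bijectively with arbitrary $\FF_p$-subspaces of the quotient $\bigoplus_{\UUU \notin A} \FF_q(b_\UUU + \mathfrak{D}_{\SSS,\sigma}) \cong \FF_p^{d(k-|A|)}$. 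Grouping the inclusion-exclusion sum by $a = |A|$, using $\binom{k}{a}$ for the number of such $A$, and substituting $i = k-a$ then delivers the desired formula for $|\operatorname{NSGfam}(\SSS, \sigma)|$; inserting this into the decomposition above produces the theorem. The main subtlety is the sandwich-and-quotient step: the a priori asymmetric row and column conditions in (Int) must collapse to the uniform picture of an arbitrary $\FF_p$-subspace in the $(k-|A|)$ coordinates outside $A$ once $L_A$ is absorbed into $W$ and the $A$-column coordinates are quotiented away.
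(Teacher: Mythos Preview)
Your proposal is correct and follows essentially the same route as the paper: reduce to counting additive subgroups of $\mathfrak{Z}_{\SSS,\sigma}/\mathfrak{D}_{\SSS,\sigma}$ satisfying~\eqref{cond:Int} via Corollary~\ref{cor:grpsplicecondition} and Lemma~\ref{lem:intermediatesubgroup}, then apply inclusion--exclusion over the coordinates where~\eqref{cond:Int} fails. Your sandwich-and-quotient description of $D(A)$ is exactly the paper's projection bijection viewed from the other side (the projection onto the $X$-coordinates has kernel $L_A$ on the upper bound $U_A$, so $\pi_X(W) = W/L_A$), and the remaining bookkeeping is identical.
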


\begin{table}
\begin{center}
\begin{tabular}{c||c|c|c|c}
& $n = 2$ &$ n = 3$ & $n = 4$ & $n = 5$
\\ \hline\hline
&&&& \\[-1em]
$d = 1$ & $2$ & $r + 5$ & $3r^{2} + 10r + 14$ & $\makecell{r^{4} + 11r^{3} + 41r^{2} \\ + 62r + 42}$
 \\ \hline
&&&& \\[-1em]
$d = 2$ & $ r + 4 $ & $\makecell{r^{4} + 7r^{3} + 19r^{2} \\ + 25r + 19}$ & $\makecell{r^{9} + 12r^{8} + 64r^{7}+ 204r^{6} \\ + 438r^{5} + 673r^{4} + 756r^{3} \\ + 610r^{2} + 327r + 100}$ & $\makecell{r^{16} + 19r^{15} + 169r^{14} \\ + 938r^{13} + 3653r^{12} \\ + \text{lower order terms}}$ 
%
\end{tabular}
\end{center}
\caption{The value of $|\{N \trianglelefteq \UT_{n}(\FF_{p^{d}})\}|$ as a polynomial in $r = p-1$ for small $n$ and $d$.  The coefficients of each polynomial in this table are positive and unimodal.}
\label{tab:NSGpolynomials}
\end{table}

\begin{proof}[Proof of Theorem~\ref{thm:numsubgroups}]
Fix $(\SSS, \sigma) \in \mathscr{T}_{n}(q)$.  For each $X \subseteq \operatorname{CR}(\SSS)$, there is a bijection
\[
\{W \le \mathfrak{Z}_{\SSS, \sigma}/\mathfrak{D}_{\SSS, \sigma} \;|\; \text{$W$ does not satisfy~\eqref{cond:Int} for $\VVV \notin X$}\} \longleftrightarrow \{W' \le \FF_{q}\operatorname{-span}\{b_{\VVV} \;|\; \VVV \in X\}\}
\]
given by projection onto $\FF_{q}\operatorname{-span}\{b_{\VVV} \;|\; \VVV \in X\}$.  Applying the principal of inclusion-exclusion,
\begin{equation}
\label{eq:numsubgroupsinner}
|\operatorname{NSGfam}(\SSS, \sigma)| =  \sum_{i = 0}^{|\operatorname{CR}(\SSS)|} (-1)^{|\operatorname{CR}(\SSS)| - i} \binom{|\operatorname{CR}(\SSS)|}{i} |\{W' \le \FF_{p}^{di} \}|
\end{equation}
which completes the proof as $\qbinom{di}{j}_{p}$ counts the subgroups $W' \le \FF_{p}^{di}$ with $|W'| = p^{j}$.
\end{proof}

\begin{rems}
\begin{enumerate}[label=(R\arabic*),  ref=(R\arabic*)]
\item When $q = p$ is prime, Theorem~\ref{thm:numsubgroups} and Corollary~\ref{cor:numideals} give the same formula; in particular Equation~\eqref{eq:numsubgroupsinner} is equal to the sum of $q$-Stirling numbers in Corollary~\ref{cor:numideals}.

\item Theorem~\ref{thm:numsubgroups} shows that the number of normal subgroups in $\UT_{n}(\FF_{q})$ is a polynomial in $p$ (considered as an indeterminate); see Table~\ref{tab:NSGpolynomials}.  When $d \neq 1$ so that $q \neq p$, this expression is not always a polynomial in $q$ (e.g. $n = 2$, $d = 2$).

\item Computing $|\{N \trianglelefteq \UT_{n}(\FF_{q})\}|$ for $1 \le n \le 10$ and $1 \le d \le 5$ suggests that this quantity may have positive and unimodal coefficients as a polynomial in $p - 1$.  Corollary~\ref{cor:numideals} implies positivity for $d = 1$ and $n \ge 0$, but unimodality is entirely mysterious.

\end{enumerate}
\end{rems}

\subsubsection{Lattices of normal subgroups}
\label{sec:normalsubgrouplattices}

The ideals of $\ut_{n}(\FF_{q})$ naturally form a lattice, so it is also possible to endow the set
\[
\mathscr{I}_{n}(q) = \{1 + \mathfrak{n} \;|\; \text{$\mathfrak{n}$ is an ideal of $\ut_{n}(\FF_{q})$}\}
\]
with a lattice structure:  $(1 + \mathfrak{n}) \wedge (1+ \mathfrak{m}) = 1 + (\mathfrak{n} \cap \mathfrak{m})$ and $(1 + \mathfrak{n}) \vee (1+ \mathfrak{m}) = 1 + (\mathfrak{n} + \mathfrak{m})$.  The set $\{N \trianglelefteq \UT_{n}(\FF_{q})\}$ is also a lattice, with $\wedge$ and $\vee$ given respectively by intersection and product of subgroups.  A consequence of Theorem~\ref{thm:correspondence} is that these lattice structures agree.

\begin{cor}\label{cor:idealsubgroups}
Let $1 + \mathfrak{m}$ and $1 + \mathfrak{n}$ be normal subgroups of $UT_{n}(\FF_{q})$.  Then
\[
(1 + \mathfrak{n}) (1+ \mathfrak{m}) = 1 + (\mathfrak{n} + \mathfrak{m}).
\]
\end{cor}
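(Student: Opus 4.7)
The plan is to apply Theorem~\ref{thm:correspondence} to convert the statement into a claim about finite subsets of $\ut_n(\FF_q)$ and then conclude by a cardinality count, rather than attempting to produce explicit decompositions.

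First I would verify that $\mathfrak{n}+\mathfrak{m}$ is an additive subgroup of $\ut_n(\FF_q)$ closed under $[\ut_n(\FF_q),\,\cdot\,]_{\operatorname{Lie}}$. Sum-closure is immediate, and bilinearity of the Lie bracket gives $[a,x+y]_{\operatorname{Lie}} = [a,x]_{\operatorname{Lie}} + [a,y]_{\operatorname{Lie}} \in \mathfrak{n}+\mathfrak{m}$ for $a\in\ut_n(\FF_q)$, $x\in\mathfrak{n}$, $y\in\mathfrak{m}$. Theorem~\ref{thm:correspondence} therefore identifies $1+(\mathfrak{n}+\mathfrak{m})$ as a normal subgroup of $\UT_n(\FF_q)$. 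Since $1+\mathfrak{n}$ and $1+\mathfrak{m}$ are both contained in this normal subgroup and are themselves normal in $\UT_n(\FF_q)$, their set product $(1+\mathfrak{n})(1+\mathfrak{m})$ is a subgroup of $\UT_n(\FF_q)$ contained in $1+(\mathfrak{n}+\mathfrak{m})$; this gives the easy direction of the equality.

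For the reverse containment I would count. The map $a\mapsto 1+a$ is injective on $\ut_n(\FF_q)$, so any additive subset $\mathfrak{a}\subseteq \ut_n(\FF_q)$ satisfies $|1+\mathfrak{a}|=|\mathfrak{a}|$. Moreover $(1+\mathfrak{n})\cap(1+\mathfrak{m})=1+(\mathfrak{n}\cap\mathfrak{m})$, since $1+x=1+y$ with $x\in\mathfrak{n}$, $y\in\mathfrak{m}$ forces $x=y\in\mathfrak{n}\cap\mathfrak{m}$. Combining these observations with the standard finite-group identity $|HK|=|H|\,|K|/|H\cap K|$ and its additive analogue,
\begin{equation*}
|(1+\mathfrak{n})(1+\mathfrak{m})|
= \frac{|\mathfrak{n}|\,|\mathfrak{m}|}{|\mathfrak{n}\cap\mathfrak{m}|}
= |\mathfrak{n}+\mathfrak{m}|
= |1+(\mathfrak{n}+\mathfrak{m})|.
\end{equation*}
Since one finite set is contained in the other and they have equal size, they coincide.

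I do not expect a significant obstacle: the argument reduces essentially to Theorem~\ref{thm:correspondence} plus the standard product and intersection formulas for finite subgroups, with the only verification being bracket-closure of $\mathfrak{n}+\mathfrak{m}$. A more direct approach that tries to show $(1+x)(1+y)\in 1+(\mathfrak{n}+\mathfrak{m})$ by writing $xy$ explicitly as an element of $\mathfrak{n}+\mathfrak{m}$, or conversely to factor $1+(x+y)$ as $(1+a)(1+b)$, appears substantially more delicate and is cleanly sidestepped by the above counting argument.
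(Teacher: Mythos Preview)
Your argument is correct. Both directions are sound: the containment $(1+\mathfrak{n})(1+\mathfrak{m})\subseteq 1+(\mathfrak{n}+\mathfrak{m})$ follows once you know the right-hand side is a subgroup, and the cardinality identity $|HK|=|H||K|/|H\cap K|$ together with its additive counterpart forces equality.

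The paper takes a different, more lattice-theoretic route. Rather than counting, it observes that $(1+\mathfrak{n})(1+\mathfrak{m})$ is the join of $1+\mathfrak{n}$ and $1+\mathfrak{m}$ in the lattice of normal subgroups, i.e.\ the intersection of all normal subgroups containing both. Theorem~\ref{thm:correspondence} then transports this to the lattice of bracket-closed additive subgroups of $\ut_n(\FF_q)$, where the corresponding join is visibly $\mathfrak{n}+\mathfrak{m}$. Your approach has the advantage of being entirely self-contained once Theorem~\ref{thm:correspondence} is invoked, avoiding any appeal to the join characterization of $HK$; the paper's approach is shorter and makes the lattice isomorphism $\mathscr{I}_n(q)\hookrightarrow\{N\trianglelefteq\UT_n(\FF_q)\}$ (which is the point of this subsection) do the work directly.
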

\begin{proof}
The product $(1 + \mathfrak{n}) (1+ \mathfrak{m})$ is the intersection of all normal subgroups containing both $1 + \mathfrak{n}$ and $1 + \mathfrak{m}$; by Theorem~\ref{introthm:A} this is the intersection of the sets $1 + \mathfrak{r}$ for additive subgroups $\mathfrak{r}$ of $\ut_{n}$ which contain $\mathfrak{m}$, $\mathfrak{n}$, and $\{[a, b] \;|\; a \in \ut_{n}(\FF_{q}), b \in \mathfrak{r}\}$; this is clearly $1 + (\mathfrak{m} + \mathfrak{n})$.
\end{proof}

An element $M$ of a lattice $\LLL$ is said to be \emph{join irreducible} if there is a unique element of $\LLL$ which is covered by $M$.  For $\LLL = \mathscr{I}_{n}(q)$ or $\{N \trianglelefteq \UT_{n}(\FF_{q})\}$, say that $M \in \LLL$ is \emph{generated by} $1 + a \in \UT_{n}(\FF_{q})$ in $\LLL$ if $M$ is the minimal element of $\LLL$ which contains $1 + a$.  For example, $M = \{1\}$ is generated by $1$ in both lattices.

\begin{prop}\label{prop:singlegenerator}
Let $\LLL = \mathscr{I}_{n}(q)$ or $\{N \trianglelefteq \UT_{n}(\FF_{q})\}$.  A subgroup $M \in \LLL$ is join irreducible if and only if $M \neq \{1\}$ and $M$ is generated in $\LLL$ by an element of $\UT_{n}(\FF_{q})$.
\end{prop}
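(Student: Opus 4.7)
The plan is to parameterize the lower covers of $M$ in $\LLL$ by codimension-one subspaces of a Frattini-like quotient $M/\Phi(M)$, and to show that $M$ is generated in $\LLL$ by a single element of $\UT_{n}(\FF_{q})$ precisely when this quotient has dimension one. For $\LLL = \mathscr{I}_{n}(q)$ I will take $\Phi(1 + \mathfrak{n}) = 1 + [\ut_{n}, \mathfrak{n}]$, and for $\LLL = \{N \trianglelefteq \UT_{n}(\FF_{q})\}$ I will take $\Phi(N) = [\UT_{n}, N]\cdot N^{p}$, where $p$ is the characteristic of $\FF_{q}$.  In both cases $\Phi(M) \in \LLL$, and $\Phi(M) \subsetneq M$ whenever $M \neq \{1\}$: in the ideal case $[\ut_{n}, \mathfrak{n}] = \mathfrak{n}$ would iterate to $\mathfrak{n} = 0$ by nilpotence, and in the group case $M/[\UT_{n}, M]$ is a finite abelian $p$-group that cannot equal its $p$-th power subgroup unless it is trivial.

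For the forward implication I observe that
\[
M = \bigvee\{\langle 1+a \rangle_{\LLL} : 1+a \in M,\; a \neq 0\},
\]
where $\langle 1+a\rangle_{\LLL}$ denotes the minimal element of $\LLL$ containing $1+a$; both sides lie in $\LLL$ and contain the same elements of $\UT_{n}(\FF_{q})$.  Since $\LLL$ is finite and $M$ is join-irreducible, some $\langle 1+a \rangle_{\LLL}$ must equal $M$, supplying the required single generator.

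For the reverse implication I first show that every lower cover $M' \lessdot M$ in $\LLL$ contains $\Phi(M)$.  In the ideal case, any codimension-one sub-ideal of $\mathfrak{n}$ yields a one-dimensional quotient on which $\ut_{n}$ must act trivially, and an Engel-style nilpotence argument shows that every maximal proper sub-ideal is of codimension one; this gives a bijection between lower covers of $1+\mathfrak{n}$ and codimension-one $\FF_{q}$-subspaces of $\mathfrak{n}/[\ut_{n},\mathfrak{n}]$.  In the group case, the $p$-group fixed-point theorem applied to the action of $\UT_{n}$ on the nontrivial $p$-group $M/M'$ forces the action to be trivial and $|M/M'| = p$, yielding a bijection between lower covers of $M$ and codimension-one $\FF_{p}$-subspaces of $M/\Phi(M)$.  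Either way, the number of lower covers equals $(q^{d}-1)/(q-1)$ or $(p^{d}-1)/(p-1)$ where $d = \dim M/\Phi(M)$, and this is $1$ iff $d = 1$.

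Finally I show that $M$ is generated by a single element iff $\dim M/\Phi(M) = 1$.  If $M = \langle 1+a\rangle_{\LLL}$ then $M/\Phi(M)$ is spanned by the class of $1+a$: in the ideal case via the identity $\langle a\rangle = \FF_{q}a + [\ut_{n},\langle a\rangle]$, obtained by a direct bracket calculation together with a Nakayama-type induction along the lower central series; in the group case because $\UT_{n}$-conjugates of $1+a$ all coincide with $1+a$ modulo $[\UT_{n}, M]$.  Conversely, a Frattini-nongenerator argument (Burnside's basis theorem in the group case, its evident Lie algebra analogue in the ideal case) shows that if the class of $1+a$ spans $M/\Phi(M)$ then $\langle 1+a\rangle_{\LLL} = M$.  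The main obstacle I expect is adapting Burnside's basis theorem to the $\UT_{n}$-relative setting when $q$ is not prime, since then $M$ need not be an $\FF_{q}$-subspace and the full Lie ideal machinery of Section~\ref{sec:ideals} is unavailable; this is handled by observing that $M/\Phi(M)$ is nevertheless an elementary abelian $p$-group with trivial $\UT_{n}$-action by construction, so the standard Frattini-subgroup argument for $p$-groups still applies verbatim.
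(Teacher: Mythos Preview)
Your argument is correct and follows a genuinely different route from the paper.  The paper's proof of the backward implication relies entirely on Lemma~\ref{lem:singlegenerator}, which uses the tight-splice machinery to compute $\langle 1+a\rangle_{\LLL}$ explicitly as $1 + \FF_{p}a + \mathfrak{D}_{\KKK,\theta}$ (or $1 + \FF_{q}a + \mathfrak{D}_{\KKK,\theta}$), where $\KKK$ is the maximal tight splice of the support partition of $a$; join-irreducibility then falls out because this subgroup visibly covers only $1 + \mathfrak{D}_{\KKK,\theta}$.  Your approach instead identifies the lower covers of $M$ with hyperplanes in $M/\Phi(M)$ via a standard $p$-group fixed-point / Engel argument, and shows that single generation is equivalent to $\dim M/\Phi(M) = 1$ by a Burnside-basis style reduction.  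This is more abstract and more portable---it would work verbatim for the ideal lattice of any finite-dimensional nilpotent Lie algebra, or the normal-subgroup lattice of any finite $p$-group inside a nilpotent overgroup---whereas the paper's argument buys an explicit description of the join-irreducibles in terms of the combinatorial objects $(\KKK,\theta)$ developed earlier, which is closer to the paper's overall aim.  Your forward implication (write $M$ as the join of the $\langle 1+a\rangle_{\LLL}$) is essentially the same as the paper's, just phrased slightly differently.
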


Proving the proposition requires a description of the subgroups in $\mathscr{I}_{n}(q)$ and $\{N \trianglelefteq \UT_{n}(\FF_{q})\}$ which are generated by any particular element of $\UT_{n}(\FF_{q})$.  For each $1 + a \in \UT_{n}(\FF_{q})$, there is a unique nonnesting set partition $\lambda \in \mathrm{NNSP}_{n}$ for which $\operatorname{supp}(\{a\}) \subseteq \up{\lambda}$ and $a_{i, j} \neq 0$ for each $\edge{i}{j} \in \lambda$.  Let $\KKK$ be the maximal tight splice of $\lambda$ as described in Proposition~\ref{prop:completesplice}, and define a labeling $\theta: \operatorname{bind}(\KKK) \to \FF_{q}^{\times}$ by
\[
\theta\left( \binding{i}{j}{j+1}{l} \right) = a_{j+1, l}/a_{i, j}.
\]

\begin{lem}\label{lem:singlegenerator}
Let $1 + a \in \UT_{n}(\FF_{q})$, and take $\lambda, \KKK$, and $\theta$ as defined above for $1 + a$.
\begin{enumerate}[label=(\roman*)]
\item The elements of $\UT_{n}(\FF_{q})$ which generate the same normal subgroup as $1 + a$ does are exactly those in the set $1 + \FF_{p}^{\times}a + \mathfrak{D}_{\KKK, \theta}$, and this subgroup is  $1 + \FF_{p}a + \mathfrak{D}_{\KKK, \theta}$.

\item The elements of $\UT_{n}(\FF_{q})$ which generate the same element of $\mathscr{I}_{n}(q)$ as $1 + a$ does are exactly those in the set $1 + \FF_{q}^{\times}a + \mathfrak{D}_{\KKK, \theta}$, and this subgroup is $1 + \FF_{q}a + \mathfrak{D}_{\KKK, \theta}$.

\end{enumerate}
\end{lem}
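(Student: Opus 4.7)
The plan is to prove both parts in parallel: for (i) one closes under addition and the adjoint action, while for (ii) one closes under $\FF_q$-scalars and the adjoint action; the arguments are otherwise structurally identical. Step one is to verify that $(\KKK, \theta) \in \mathscr{T}_\lambda(q)$ and that the claimed subgroup contains $1+a$. The labeling $\theta$ takes values in $\FF_q^\times$ because $a_{i,j}$ and $a_{j+1,l}$ are nonzero for every $\binding{i}{j}{j+1}{l} \in \operatorname{bind}(\KKK)$ by the choice of $\lambda$. Using Lemma~\ref{lem:commutator}, the bracket $[e_{j,j+1}, a]_{\operatorname{Lie}}$ at each such binding equals $a_{i,j}(e_{i,j+1} - \theta(\binding{i}{j}{j+1}{l}) e_{j,l})$ modulo $\ut_{\upnum{\lambda}{2}}$ and hence lies in $\mathfrak{D}_{\KKK,\theta}$; the remaining cases of that lemma produce entries indexed by $\up{\lambda} \setminus \KKK$, also inside $\mathfrak{D}_{\KKK,\theta}$. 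Combined with $\mathfrak{D}_{\KKK,\theta}$ being a Lie algebra ideal (Lemma~\ref{lem:Zcommutator}), this shows that $\FF_p a + \mathfrak{D}_{\KKK,\theta}$ satisfies the hypothesis of Theorem~\ref{introthm:A} and that $\FF_q a + \mathfrak{D}_{\KKK,\theta}$ is a Lie algebra ideal.

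For minimality, suppose $N \trianglelefteq \UT_n(\FF_q)$ contains $1+a$. Corollary~\ref{cor:commutatorcorrespondence} yields $[\UT_n(\FF_q), N] - 1 \supseteq \FF_q \cdot [\ut_n, a]_{\operatorname{Lie}}$, and a second iteration together with the basis construction in the proof of Lemma~\ref{lem:upnum2} (which needs only a single vector having a nonzero entry at each chosen $\edge{j}{k} \in \lambda$, supplied here by $a$) gives $\ut_{\upnum{\lambda}{2}} \subseteq N - 1$. The first-order commutators of Lemma~\ref{lem:commutator} then produce both the combined basis vectors $e_{i,j+1} - \theta(\binding{i}{j}{j+1}{l}) e_{j,l}$ and the pure basis vectors $e_{r,s}$ for $\edge{r}{s} \in \up{\lambda} \setminus \KKK$, so $\mathfrak{D}_{\KKK,\theta} \subseteq N - 1$. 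Since $(1+a)^m \equiv 1 + ma \pmod{\ut_{\upnum{\lambda}{2}}}$ for every $m \in \ZZ$, the group structure of $N$ then forces $1 + \FF_p a + \mathfrak{D}_{\KKK,\theta} \subseteq N$; for part (ii), the $\FF_q$-closure of ideals upgrades this to $1 + \FF_q a + \mathfrak{D}_{\KKK,\theta} \subseteq N$.

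To characterize the set of generators, any $1+b$ generating the same subgroup lies inside it, so $b = \alpha a + d$ with $d \in \mathfrak{D}_{\KKK,\theta}$ and $\alpha \in \FF_p$ (resp.~$\FF_q$). Since $\mathfrak{D}_{\KKK,\theta} \subseteq \ut_{\upnum{\lambda}{1}}$, one has $b_{i,j} = \alpha a_{i,j}$ for each $\edge{i}{j} \in \lambda$: if $\alpha = 0$ then $\operatorname{supp}(\{b\}) \subseteq \upnum{\lambda}{1} \subsetneq \up{\lambda}$ and the subgroup generated by $1+b$ is strictly smaller, whereas if $\alpha$ is a unit the construction applied to $b$ recovers the same $\lambda$, $\KKK$, and $\theta$ (the ratios $b_{j+1,l}/b_{i,j} = a_{j+1,l}/a_{i,j}$ are unchanged), and the already-established forward direction then produces the same subgroup.

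The main obstacle will be the minimality step: Lemma~\ref{lem:commutator} and Lemma~\ref{lem:upnum2} are written for ideals, and I need to check that their proofs adapt transparently to the non-ideal setting---using Corollary~\ref{cor:commutatorcorrespondence} to bootstrap from group commutators to Lie commutators---and that the commutators so produced span \emph{all} of $\mathfrak{D}_{\KKK,\theta}$ rather than merely a proper subspace thereof.
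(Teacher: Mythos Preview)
Your approach is genuinely different from the paper's.  The paper never tries to produce $\mathfrak{D}_{\KKK,\theta}$ inside $N$ by an explicit commutator computation; instead it observes that the minimal $N$ lies in some family $\operatorname{NSGfam}(\SSS,\sigma)$ by Corollary~\ref{cor:grpsplicecondition}, hence properly contains $1+\mathfrak{D}_{\SSS,\sigma}$, and then runs a pure cardinality comparison: $|N|>|\mathfrak{D}_{\SSS,\sigma}|\ge|\mathfrak{D}_{\KKK,\theta}|$ because $\operatorname{rows}(\SSS)\subseteq\operatorname{rows}(\KKK)$ (maximality of $\KKK$), and $|N|$ is a $p$-power bounded above by $|1+\FF_pa+\mathfrak{D}_{\KKK,\theta}|=p\,|\mathfrak{D}_{\KKK,\theta}|$.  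Your constructive route is more transparent about \emph{why} the bound is attained, but the counting argument is shorter and avoids the spanning issue you flag.

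That spanning issue is a real gap in your outline as written.  You assert that the first-order commutators of Lemma~\ref{lem:commutator} ``produce \ldots\ the pure basis vectors $e_{r,s}$ for $\edge{r}{s}\in\up{\lambda}\setminus\KKK$,'' but this is not what the lemma gives: whenever $\edge{i}{j},\edge{j+1}{l}\in\lambda$ \emph{without} $\edge{i}{j+1},\edge{j}{l}\in\nu_{\KKK}$ (so that this potential binding is not a binding of $\KKK$), the commutator $[e_{j,j+1},a]$ still only yields the combination $a_{i,j}e_{i,j+1}-a_{j+1,l}e_{j,l}$ modulo $\ut_{\upnum{\lambda}{2}}$, not the two pure vectors separately.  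To close the gap you must use the \emph{maximality} of $\KKK$.  The cleanest fix is to set $V=\FF_q\text{-span}\{[b,a]_{\operatorname{Lie}}:b\in\ut_n\}+\ut_{\upnum{\lambda}{2}}$ and apply Lemma~\ref{lem:splicecondition} with $\mathfrak{n}=\{a\}$ and $\mathfrak{m}=V$: this produces a tight splice $\SSS'$ of $\lambda$ whose vertical edge set is exactly $\{\edge{r}{s}\in\upnum{\lambda}{1}:\FF_qe_{r,s}\not\subseteq V\}$.  Since $\KKK$ is maximal, $\SSS'\subseteq\KKK$, so $\FF_qe_{r,s}\subseteq V$ for every $\edge{r}{s}\in\upnum{\lambda}{1}\setminus\nu_{\KKK}=\up{\lambda}\setminus\KKK$; together with the binding vectors (which you do get directly from the first case of Lemma~\ref{lem:commutator}) this yields $\mathfrak{D}_{\KKK,\theta}\subseteq V\subseteq N-1$.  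With this insertion the rest of your argument goes through.
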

\begin{proof}
Claims (i) and (ii) follow from nearly identical arguments, so I will only present the proof of (i).  From the definition of $\KKK$ and $\theta$, it is the case that $a \in \mathfrak{Z}_{\KKK, \theta}$.  Lemma~\ref{lem:intermediatesubgroup} therefore implies that $1 + \FF_{p}a + \mathfrak{D}_{\KKK, \theta} \trianglelefteq \UT_{n}(\FF_{q})$, so what remains is to show that $1 + \FF_{p}a + \mathfrak{D}_{\KKK, \theta}$ is the minimal normal subgroup containing $1 + a$.

Suppose that $N$ is the minimal normal subgroup of $\UT_{n}(\FF_{q})$ which contains $a$, so that $N \subseteq 1 + \FF_{p}a + \mathfrak{D}_{\KKK, \theta}$.  It follows from the definition of $N$ that $\operatorname{supp}(N) = \up{\lambda}$, so by Corollary~\ref{cor:grpsplicecondition}, $N$ properly contains $1 + \mathfrak{D}_{\SSS, \sigma}$ for some $(\SSS, \sigma) \in \mathscr{T}_{\lambda}(q)$. Thus
\[
|N| > |\mathfrak{D}_{\SSS, \sigma}| = q^{|\upnum{\lambda}{1}| - |\operatorname{rows}(\SSS)|} \ge q^{|\upnum{\lambda}{1}| - |\operatorname{rows}(\KKK)|} = |\mathfrak{D}_{\KKK, \theta}|
\]
since $\operatorname{rows}(\SSS) \subseteq \operatorname{rows}(\KKK)$.  The order of $N$ is a power of $p$, so $|N| \ge p|\mathfrak{D}_{\KKK, \theta}| = |1 + \FF_{p}a + \mathfrak{D}_{\KKK, \theta}|$, and thus $N = 1 + \FF_{p}a + \mathfrak{D}_{\KKK, \theta}$.  Generalizing this argument, every element of $1 + \FF_{p}^{\times}a + \mathfrak{D}_{\KKK, \theta}$ generates $1 + \FF_{p}a + \mathfrak{D}_{\KKK, \theta}$.

Finally, suppose that $1 + b \in \UT_{n}(\FF_{q})$ also generates $1 + \FF_{p}a + \mathfrak{D}_{\KKK, \theta}$ as a normal subgroup.  As $1 + \mathfrak{D}_{\KKK, \theta}$ is normal, this must mean that $1 + b \in 1 + \FF_{p}^{\times}a + \mathfrak{D}_{\KKK, \theta}$.
\end{proof}

\begin{proof}[Proof of Proposition~\ref{prop:singlegenerator}]
If $M \in \LLL$ is join irreducible, then $M$ is generated in $\LLL$ by each element of $M \setminus K$, where $K \in \LLL$ is the unique element covered by $M$.  If $M$ is generated by an element $1 + a \in \UT_{n}(\FF_{q})$, then $M$ is one of the subgroups in Lemma~\ref{lem:singlegenerator}.  For $a \neq 0$, these subgroups cover only $\mathfrak{D}_{\KKK, \theta}$ in their respective lattices, and are therefore join irreducible.
\end{proof}

\printbibliography

\end{document}